\newtheorem{theorem}{Theorem}[section]
\newtheorem{theoremx}{Theorem}
\newtheorem{proposition}[theorem]{Proposition}
\newtheorem{corollary}[theorem]{Corollary}
\newtheorem{lemma}[theorem]{Lemma}
\theoremstyle{definition}
\newtheorem{definition}[theorem]{Definition}
\newtheorem{remark}[theorem]{Remark}
\newlength{\fixmidfigure}
\newenvironment{midfigure}[1][]{
  \setlength{\fixmidfigure}{\lastskip}\addvspace{-\lastskip}
  \ifstrempty{#1}{\begin{figure}}{\begin{figure}[#1]}
}{
  \end{figure}
  \addvspace{\fixmidfigure}
}
\newcommand{\N}{\mathbb{N}}
\newcommand{\Z}{\mathbb{Z}}
\newcommand{\Q}{\mathbb{Q}}
\newcommand{\R}{\mathbb{R}}
\newcommand{\C}{\mathbb{C}}
\renewcommand{\setminus}{\smallsetminus}
\newcommand{\e}{\epsilon}
\newcommand{\reduced}{N-reduced\xspace}
\newcommand{\reduction}{N-reduction\xspace}
\DeclareRobustCommand{\vo}{\accentset{\star}{v}}
\newcommand{\half}{H}
\newcommand{\ltp}{<}
\newcommand{\ltg}{<^{\star}\mkern-2mu}
\newcommand{\leqg}{\leq^\star\mkern-2mu}
\DeclareMathOperator{\Aut}{Aut}
\DeclareMathOperator{\PGL}{PGL}
\DeclareMathOperator{\SL}{SL}
\DeclareMathOperator{\PSL}{PSL}
\DeclareMathOperator{\proj}{proj}
\DeclareMathOperator{\Min}{Min}
\newcommand{\Ax}{\gamma}
\DeclarePairedDelimiter\floor{\lfloor}{\rfloor}
\DeclarePairedDelimiter{\abs}{\lvert}{\rvert}
\DeclarePairedDelimiter{\gen}{\langle}{\rangle}
\DeclarePairedDelimiterX{\inp}[2]{\langle}{\rangle}{#1, #2}
\DeclarePairedDelimiter{\stdrightbar}{}{\rvert}
\DeclarePairedDelimiter{\fullrightbar}{.}{\rvert}
\newcommand\rightbar{\@ifstar{\fullrightbar*}{\stdrightbar}}
\newcommand\SetSymbol[1][]{{\nonscript\;}{:}{\nonscript\;}\mathopen{}}
\providecommand\given{} 
\DeclarePairedDelimiterX{\Set}[1]\{\}{\renewcommand\given{\SetSymbol[\delimsize]}#1}
\newcommand{\nset}[1]{\overline{#1}}
\setlist[enumerate,1]{label=\textup{(\arabic*)}}
\tikzstyle{vertex}=[fill=black, draw=none, shape=circle, inner sep=0pt, minimum size=0.2cm]
\tikzstyle{vertex-dash}=[inner sep=0pt, decorate, decoration={markings, mark=at position 0.15 with {\draw[line width=1pt] (3pt,1pt) -- (-3pt,1pt);}}]
\tikzstyle{vertex-dash-vertical}=[inner sep=0pt, decorate, decoration={markings, mark=at position 0 with {\draw[line width=1pt] (0,3.2pt) -- (0,-2.8pt);}}]
\tikzstyle{bar}=[edge, {|-|}, dashed]
\tikzstyle{halfbar}=[edge, {|-}, dashed]
\tikzstyle{halfbar-end}=[edge, {-|}, dashed]
\tikzstyle{halfbar-solid}=[-, edge, {|-}]
\tikzstyle{bar-solid}=[-, edge, {|-|}]
\tikzstyle{edge}=[-, line width=1pt]
\tikzstyle{edge-dash}=[-, edge, dashed]
\tikzstyle{edge-bold}=[-, line width=1.5pt]
\tikzstyle{brace}=[-, edge, decorate, decoration={calligraphic brace}]
\tikzstyle{implication}=[-, {-{Stealth[length=3mm, width=3mm]}}, line width=2pt]
\colorlet{link}{blue}
\definecolor{cite}{HTML}{009900}
\definecolor{NiceGreen}{HTML}{00BB00}
\definecolor{NiceRed}{HTML}{EE0000}
\definecolor{NiceBlue}{HTML}{0033FF}
\title{Recognition and constructive membership for purely hyperbolic groups acting on trees}
\author{Ari Markowitz}
\address{
Department of Mathematics, University of Auckland, 38 Princes Street, 1010 Auckland, New Zealand}
\email{ari.markowitz@auckland.ac.nz}
\begin{document}

\begin{abstract}
  We present an algorithm which takes as input a finite set $X$ of automorphisms of a simplicial tree, and outputs a generating set $X'$ of $\gen{X}$ such that either $\gen{X}$ is purely hyperbolic and $X'$ is a free basis of $\gen{X}$, or $X'$ contains a non-trivial elliptic element. As a special case, the algorithm decides whether a finitely generated group acting on a locally finite tree is discrete and free. This algorithm, which is based on  Nielsen's reduction method, works by repeatedly applying Nielsen transformations to $X$ to minimise the generators of $X'$ with respect to a given pre-well-ordering. We use this algorithm to solve the constructive membership problem for finitely generated purely hyperbolic automorphism groups of trees. We provide a \textsc{Magma} implementation of these algorithms, and report its performance.
\end{abstract}

\maketitle

\section{Introduction}
There is a rich theory of groups acting on geometric spaces. Of interest are groups which are discrete and free; examples are Schottky subgroups of $\PSL_2(\R)$ and $\PSL_2(\C)$ acting on hyperbolic space \cite{beardon, maskit}, and of $\PSL_2(K)$ for a non-archimedean local field $K$ acting on the Bruhat-Tits tree \cite{lubotzky, mumford}. For a topological group $G$ acting on such a space, some natural problems are the following:
\begin{enumerate}
  \item
  Decide whether $G$ is discrete and free, and if so find a free basis for $G$.
  \item
  \emph{The constructive membership problem.} Suppose $G$ is a subgroup of a group $H$. Given $g \in H$, decide whether $g \in G$, and if so write $g$ as a word in a specified generating set of $G$.
\end{enumerate}
The first problem was solved for 2-generator subgroups of $\PSL_2(\R)$ by Purzitsky \cite{purzitsky}.
The second problem was solved for discrete free 2-generator subgroups of $\PSL_2(\R)$ by Eick, Kirschmer, and Leedham-Green \cite{kirschmer}, and was later generalised to discrete 2-generator subgroups of $\PSL_2(\R)$ by Kirschmer and Rüther \cite{kirschmer-discrete}. For larger numbers of generators, the problems remain open.

Conder \cite{conder} solved (1) for 2- and 3-generator groups of automorphisms of locally finite trees, and conjectures an algorithm for every finite number of generators \cite[Conjecture 2.3]{conder}. In this paper, we solve (1) for all finitely generated groups of automorphisms of locally finite trees. More generally, we provide an algorithm to decide whether a finitely generated group of automorphisms of a simplicial tree is purely hyperbolic (that is, every vertex has trivial stabiliser). We also solve (2) for finitely generated purely hyperbolic subgroups of automorphism groups of trees.

The standard approach to these problems, as done in \cite{conder, kirschmer, gilman}, uses the interactions between translation axes and translation lengths of the generators. This is difficult to generalise to higher numbers of generators, since the number of possible interactions grows rapidly.

In Section \ref{sec:reduction} we prove the following theorem:
\begin{theoremx}\label{thm:main}
  Let $T$ be a simplicial tree. Let $X$ be a finite subset of $\Aut(T)$ generating a group $G$. There exists an algorithm that, given $X$, outputs a basis $X'$ of $G$ that either contains an elliptic element or is a free basis for $G$.
\end{theoremx}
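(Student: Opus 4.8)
The plan is to adapt Nielsen's reduction method from free groups to the action on the tree $T$. The key is to find a suitable pre-well-ordering on $\Aut(T)$ — or rather on the finite-support data attached to each automorphism — which plays the role that word length plays for free groups, and then to show that a set $X$ which is minimal with respect to this ordering (and cannot be shrunk by removing trivial elements) is automatically either a free basis or already exhibits a non-trivial elliptic element.

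First I would set up the geometric invariants. For each $g \in \Aut(T)$ one has the dichotomy: $g$ is elliptic (fixes a vertex or an edge midpoint) or hyperbolic (has an axis $\Ax_g$ and a positive translation length $\ell(g)$). A Nielsen transformation on the tuple $X = (x_1,\dots,x_n)$ replaces some $x_i$ by $x_i^{-1}$, or by $x_i x_j^{\pm 1}$ for $j \neq i$; these preserve $\gen{X}$ and send bases to bases. I would attach to a tuple $X$ a multiset of invariants — something like the translation lengths together with a measure of how the axes overlap, refined by a choice of base vertex — and declare $X \leq Y$ when this data is smaller in an appropriate lexicographic sense. The crucial finiteness/termination point is that this should be a pre-well-ordering, so that repeatedly replacing $X$ by a strictly smaller Nielsen-equivalent tuple terminates; I expect this to need care because translation lengths alone do not see enough, and a secondary invariant (e.g. distance of axes from a base vertex, or the "width" of the $\half$-decomposition hinted at by the notation in the preamble) must be incorporated so that only finitely many tuples lie below any given one.

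Next comes the structural heart: show that an \reduced{} tuple (one minimal in its Nielsen class, with no non-trivial elliptic entries and no removable entries) is a free basis. The mechanism is a ping-pong / small-cancellation argument on $T$: minimality forces that for any reduced word $w = x_{i_1}^{\e_1}\cdots x_{i_k}^{\e_k}$ in the generators with no cancellation, the axes and their translates are arranged so that successive letters cannot cancel the progress made along the tree — concretely, the translate of a fundamental segment of $\Ax_{x_{i_1}}$ under the prefix survives, so $w$ moves the base vertex a definite amount and in particular $w \neq 1$. This is exactly where minimality is used: if some product $x_i x_j^{\pm 1}$ or $x_i^{-1}$ were "better" we would contradict reducedness, and the only way every such product fails to be better is that the configuration of axes is ping-pong-like. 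The main obstacle will be handling the borderline cases where axes share long overlaps or where some generator is a "short" hyperbolic element whose axis is dominated by others — these are precisely the cases that force a Nielsen transformation, and one must verify that in those cases the transformation does strictly decrease the ordering, closing the induction.

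Finally, I would assemble the algorithm: starting from $X$, repeatedly test each Nielsen transformation; if one produces a strictly smaller tuple (in the pre-well-ordering), apply it and iterate; if at any point a non-trivial elliptic element appears among the generators, output the current tuple $X'$ (which contains that elliptic element); otherwise the process terminates at an \reduced{} tuple, which by the structural result is a free basis, and we output it. Termination is guaranteed by the pre-well-ordering; correctness of the output is guaranteed by the structural dichotomy. Each individual step is effective provided $T$ is computable (one needs to compute translation lengths, axes, and the ordering comparisons), so the whole procedure is an algorithm. I would also remark that when $T$ is locally finite and $G$ is finitely generated, deciding discreteness-and-freeness reduces to this, since in that setting pure hyperbolicity is equivalent to being discrete and free.
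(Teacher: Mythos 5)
Your high-level strategy is the same as the paper's (Nielsen reduction driven by a pre-well-ordering, termination by well-orderedness, output either an elliptic generator or a free basis), but the two ingredients you leave as ``to be found'' and ``to be verified'' are precisely the paper's actual content, so as written there is a genuine gap. First, the ordering: the paper does not order tuples by translation lengths and axis-overlap data; it orders individual elements by $\abs{g} = d(\vo, g\vo)$, refined by a lexicographic well-ordering $\ltp$ on paths issuing from $\vo$ applied to the pair of initial halves $\{\half(g), \half(g^{-1})\}$ (Definitions \ref{def:well-orderings} and \ref{def:more-orderings}). This concrete construction is what makes $\leqg$ a pre-well-ordering with effectively computable comparisons; ``translation lengths plus some measure of axis overlap'' is not shown to have either property, and translation length is in fact not the quantity used at all.

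Second, and more seriously: your algorithm only tests elementary Nielsen moves (inverses and length-two products), and you assert that a tuple admitting no improving elementary move must be ping-pong-like, deferring the ``borderline'' overlapping-axes cases. That deferred step is the heart of the theorem. When N2 holds but N3 fails for a triple $x, y, z$, no length-two product decreases length at all ($\abs{xy} = \abs{x}$ and $\abs{yz} = \abs{z}$), so any length-based ordering stalls; the paper shows (Lemma \ref{lem:N3-violate-overlap} together with case 3 of the correctness proof of Algorithm \ref{alg:reduce}) that then $\delta(x^{-1}, y) = \delta(y^{-1}, z) = \abs{y}/2$, and that either one of the degenerate configurations $y = x$, $y = z$, $p = q$ occurs, in which case $x$, $z$, or $y$ respectively is elliptic and the algorithm stops correctly, or else the half-path tie-breaker strictly decreases $xy$ or $yz$ under $\ltg$. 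Without this dichotomy you cannot conclude that a tuple minimal under elementary moves is \reduced: a priori arbitrarily long products might be needed to improve (this is exactly why the earlier arguments of Weidmann and Kapovich--Weidmann are non-constructive, and the length-two reduction is the paper's key new point), and the elliptic outcome could be missed entirely. Note also that the paper proves ``\reduced{} implies free basis'' not by ping-pong but by the cancellation Lemma \ref{lem:no-overlap} and the computation $\abs{g^2} - \abs{g} > 0$ (Proposition \ref{prop:reduced-implies-free}); ping-pong only enters in Section \ref{sec:constructive-membership}, so your ping-pong route would itself require the fundamental-system analysis developed there.
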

We use an analogue of Nielsen's method of finding a free basis for a finitely generated subgroup of a free group \cite[Chapter I.2]{lyndon-schupp}. We define a \emph{strongly \reduced} basis, and show that if $G$ is purely hyperbolic (in particular, if $T$ is locally finite and $G$ is discrete and free), then a strongly \reduced basis exists. This provides an algorithmic version of a theorem of Weidmann \cite{weidmann} which, while non-constructive, uses similar methods to show that such a basis  exists.

As a consequence, we obtain the following:
\begin{theoremx}\label{thm:mainalgo}
  Let $T$ be a locally finite simplicial tree. Let $X$ be a finite subset of $\Aut(T)$ generating a group $G$. There exists an algorithm that, given $X$, decides whether $G$ is discrete and free.
\end{theoremx}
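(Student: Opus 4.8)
The plan is to reduce the question "is $G$ discrete and free?" to the output of the algorithm in Theorem A, together with a decidability check on the returned basis. First I would apply the algorithm of Theorem A to the input set $X$, obtaining a basis $X'$ of $G$ which either contains a non-trivial elliptic element or is a free basis of $G$. The key observation is that, for a group $G$ acting on a \emph{locally finite} tree $T$, being discrete and free is equivalent to $G$ being purely hyperbolic: if every vertex stabiliser is trivial then in particular $G$ is torsion-free (a finite-order automorphism of a tree fixes a vertex, by the Bruhat–Tits fixed point theorem), and a torsion-free group acting on a tree with trivial edge and vertex stabilisers — which is what "purely hyperbolic" forces after passing to the action on the tree — is free by Bass–Serre theory; moreover on a locally finite tree, pure hyperbolicity of a finitely generated subgroup of $\Aut(T)$ is exactly discreteness, since a non-discrete subgroup accumulates at the identity and hence (by local finiteness) contains a non-trivial element fixing a ball, i.e. a non-trivial elliptic element. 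Conversely a discrete free group of tree automorphisms cannot contain a non-trivial elliptic element, as any elliptic element has finite order in the relevant quotient — here one uses that a finitely generated discrete subgroup of $\Aut(T)$ with $T$ locally finite has finite vertex stabilisers, so a non-trivial elliptic element would be torsion, contradicting freeness. I would state this equivalence cleanly as a preliminary lemma (or cite the corresponding statement from the body of the paper).

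Given this equivalence, the algorithm is immediate: run the Theorem A algorithm on $X$ to get $X'$. If $X'$ is a free basis of $G$ (the first alternative), then $G$ is free, and — crucially — the existence of a strongly $N$-reduced basis guaranteed by the machinery behind Theorem A holds precisely when $G$ is purely hyperbolic, so in this case $G$ is also discrete; output \textsc{true}. If instead $X'$ contains a non-trivial elliptic element, then $G$ contains a non-trivial elliptic element, so $G$ is not purely hyperbolic, hence by the equivalence $G$ is not both discrete and free; output \textsc{false}. The only thing that needs care is checking algorithmically whether a given element of $\Aut(T)$ is elliptic: since $T$ is locally finite and the element is given by its action, one can decide whether it fixes a vertex — equivalently whether its translation length is zero — by a finite computation, e.g. by testing whether $g$ and $g^2$ move some fixed base vertex by compatible amounts, using that $g$ is elliptic iff $d(v, g^2 v) < 2 d(v, gv)$ fails in the standard characterisation of translation length; this is a bounded computation on the ball around a base vertex.

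The main obstacle is the correctness of the equivalence between "purely hyperbolic" and "discrete and free" in the locally finite setting, and in particular making precise the direction that a purely hyperbolic finitely generated group acting on a locally finite tree is discrete. The subtlety is that pure hyperbolicity is a statement about \emph{all} vertex stabilisers being trivial, whereas discreteness is about the topology on $\Aut(T)$; the bridge is that for $T$ locally finite, $\Aut(T)$ is a totally disconnected locally compact group in which a neighbourhood basis of the identity is given by pointwise stabilisers of finite balls, so a non-discrete subgroup must contain a non-trivial element fixing an arbitrarily large ball, which is elliptic. Establishing this carefully — and confirming that the "strongly $N$-reduced basis exists $\iff$ purely hyperbolic" input to Theorem A is exactly the hypothesis we need — is where the real work lies; once that is in place, the algorithm itself is just a wrapper around Theorem A plus an ellipticity test, and the rest of the proof is routine.
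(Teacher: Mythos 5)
Your proposal is correct and follows essentially the same route as the paper: reduce to the equivalence ``$G$ discrete and free $\iff$ $G$ purely hyperbolic'' for locally finite $T$ (the paper's Proposition \ref{prop:ihara-generalisation}, one direction cited from Conder--Morgenstern-type results and the other proved via open vertex stabilisers and Bass--Serre), and then invoke the Theorem \ref{thm:main} algorithm, whose output flag settles pure hyperbolicity. One cosmetic slip: your parenthetical ellipticity test ``$g$ is elliptic iff $d(v,g^2v) < 2d(v,gv)$ fails'' is not the right inequality --- the correct criterion, as in Proposition \ref{prop:translation-length-formula}, is $l(g)=\abs{g^2}-\abs{g}=0$, i.e.\ $d(v,g^2v)\leq d(v,gv)$ --- but since this check is already internal to Algorithm \ref{alg:reduce}, it does not affect your argument.
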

In Section \ref{sec:strongly-reduce} we prove that if $G$ is purely hyperbolic, then it has a unique strongly \reduced basis.
In Section \ref{sec:constructive-membership} we associate to $G$ a fundamental domain $\Gamma(G)$, and provide an algorithm that takes as input a vertex $v$ of $T$ and outputs the unique $g \in G$ such that $gv\in \Gamma(G)$. With this we prove the following:
\begin{restatable}{theoremx}{conMem}\label{thm:constructive-membership}
  Every finitely generated purely hyperbolic subgroup of $\Aut(T)$ has solvable constructive membership problem.
\end{restatable}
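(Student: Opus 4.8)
The plan is to reduce the constructive membership problem for a finitely generated purely hyperbolic $G \le \Aut(T)$ to the orbit problem solved by the fundamental domain $\Gamma(G)$, together with the algorithm of Theorem A that produces a strongly $N$-reduced (hence free) basis $X'$ of $G$. Given $g \in \Aut(T)$, I first want to decide whether $g \in G$; then, assuming $g \in G$, I want to write $g$ as a word in the input generating set $X$. Since $X'$ is obtained from $X$ by an explicit sequence of Nielsen transformations, any expression for $g$ as a word in $X'$ can be mechanically rewritten as a word in $X$, so it suffices to express $g$ as a word in the strongly $N$-reduced basis $X'$.

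First I would fix a base vertex $v_0$ lying in the fundamental domain $\Gamma(G)$ — indeed I would take $v_0$ to be a suitable "centre" of $\Gamma(G)$. The key tool is the algorithm of Section~\ref{sec:constructive-membership}: given any vertex $w$ of $T$, it outputs the unique $h \in G$ with $hw \in \Gamma(G)$. Apply this to $w = g v_0$: if $g \in G$, then $g v_0$ lies in a single $G$-orbit of $\Gamma(G)$, the algorithm returns some $h \in G$ with $h g v_0 \in \Gamma(G)$, and since $v_0 \in \Gamma(G)$ and $\Gamma(G)$ is a fundamental domain, purely hyperbolicity (trivial vertex stabilisers) forces $hg = 1$, i.e. $g = h^{-1} \in G$. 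Conversely, if the algorithm's output $h$ satisfies $hg v_0 \notin \Gamma(G)$, or if $hg$ fixes $v_0$ without being trivial — which cannot happen here — then $g \notin G$; more carefully, $g \in G$ if and only if $h g v_0 \in \Gamma(G)$ and $hg = \mathrm{id}$, both of which are checkable. The decision procedure is thus: run the orbit algorithm on $g v_0$, obtain $h$, and test whether $hg$ is the identity automorphism of $T$ (a local check on the finitely many edges at $v_0$ suffices since $T$ is locally finite). This simultaneously yields $g = h^{-1}$, and the orbit algorithm returns $h$ already expressed as a word in $X'$, completing the constructive part.

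The implementation details to nail down are: (i) the orbit algorithm must return $h$ \emph{as an explicit word} in $X'$, not merely as an automorphism — this should follow from how $\Gamma(G)$ is built, by walking from $gv_0$ toward $\Gamma(G)$ across fundamental-domain translates and recording which generator is applied at each step; (ii) one must bound the running time, which comes down to bounding the combinatorial distance from $gv_0$ to $\Gamma(G)$ in terms of the input data, again using local finiteness of $T$ and the structure of $\Gamma(G)$; and (iii) checking that two tree automorphisms agree is decidable from finite local data given that $T$ is locally finite and the automorphisms are specified suitably. The main obstacle I anticipate is establishing that $\Gamma(G)$ genuinely is a fundamental domain with the stated uniqueness property and that the associated "project into $\Gamma(G)$" algorithm terminates — this is where the structure theory of strongly $N$-reduced bases and the action on the tree does the real work; once that is in hand (it is the content of Section~\ref{sec:constructive-membership}), the reduction above is essentially bookkeeping.
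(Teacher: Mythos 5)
Your proposal is correct and follows essentially the same route as the paper: replace $X$ by a strongly \reduced basis while recording the Nielsen transformations, run Algorithm \ref{alg:fundamental} on $g\vo$ to obtain the unique $h \in G$ (as an explicit word) with $hg\vo \in \Gamma(G)$, and conclude $g \in G$ if and only if $g = h^{-1}$, using that $\vo \in \Gamma(G)$ and the action is free on vertices. One minor caveat: your aside that testing $hg = \mathrm{id}$ is ``a local check on the finitely many edges at $v_0$'' is not right (an automorphism can fix the star of a vertex without being trivial, and Theorem \ref{thm:constructive-membership} does not assume $T$ locally finite); the paper simply treats equality of the given automorphisms as decidable in the computational model, which is all that is needed.
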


In Section \ref{sec:implementation}, we discuss our implementation of these algorithms in \textsc{Magma} \cite{magma} for finitely generated subgroups of $\PGL_2(K)$ acting on the Bruhat-Tits tree where $K$ is a $p$-adic field, and report its performance.

\subsection{Related work}
The application of Nielsen reduction to trees is not new. Weidmann \cite{weidmann} uses similar methods to prove results relating to finitely generated groups acting on trees. In particular, he provides conditions under which a finite set of automorphisms of a tree generates an amalgamated product where each factor is either free or has a global fixed point \cite[Theorem 2]{weidmann}.

In another generalisation, Kapovich and Weidmann \cite{kapovich} prove that a finite generating set of a group $G$ acting on a $\delta$-thin hyperbolic space is Nielsen equivalent to a generating set of $G$ that is either a free basis for $G$ or contains a generator of ``small" translation length.

Both results provide a non-constructive version of Theorem \ref{thm:main} as a special case. The method works by iteratively replacing a generator with an element of $G$ that can be obtained via Nielsen transformations and is smaller with respect to a given pre-order, until no further replacements are possible. However, the methods in these papers are non-constructive, as they do not provide a bound on the word length of such an element. We show that if such a replacement exists, then one exists with word length 2. This allows us to obtain a practical algorithm to find a free basis, and to solve the constructive membership problem.

There is also a generalisation of Nielsen's methods to groups with a length function, as detailed by Hoare \cite{hoare} and by Lyndon and Schupp \cite[Chapter I.9]{lyndon-schupp}. Depending on the axioms chosen, $\Aut(T)$ can be endowed with a length function. However, to our knowledge none of the results on length functions specialise to the results in this paper.

\section{Nielsen reduction}\label{sec:reduction}
For the remainder of this paper we fix the following notation:
\begin{itemize}
  \item $T$ is a simplicial tree with vertex set $V(T)$. We identify $T$ with its geometric realisation, so that a point $w \in T$ may be a vertex or lie on an edge.
  \item $X$ is a finite subset of $\Aut(T)$ generating a group $G$ equipped with the compact-open topology \cite{garrido}.
  \item $\vo$ is a distinguished vertex of $T$. It may be chosen arbitrarily, but once chosen it remains fixed.
  \item If $u$ and $w$ are vertices of $T$, then $[u, w]$ is the unique path (without backtracking) from $u$ to $w$. We identify this path with the corresponding line segment in the geometric realisation of $T$.
  \item Given a path $p$ on $T$ and $C \subseteq T$, if $p \cap C$ is the geometric realisation of a subpath $q$ of $p$, then we identify $p \cap C$ with $q$.
  \item If $p$ is a path or a walk (that is, a path with possible backtracking) on $T$, then $\abs{p}$ is the length of $p$. For a walk, this is the number of terms in the corresponding sequence of edges.
  \item $X^- = \Set{g^{-1} \given g \in X}$ and $X^\pm = X \cup X^-$.
  \item Given $g \in G$, define $\abs{g} = d(\vo, g\vo)$, where $d$ is the graph distance on $T$.
  \item Given $g \in G$, the \emph{translation length} of $g$ is $l(g) = \min\Set{d(w, gw) \given w \in T}$.
    The \emph{minimum translation set} of $g$ is $\Min(g) = \Set{w \in T \given d(w, gw) = l(g)}$.
\end{itemize}
Recall that $g$ is \emph{elliptic} if $l(g) = 0$, and \emph{hyperbolic} if $l(g) > 0$. Elliptic elements are typically distinguished from \emph{inversions} which invert an edge \cite{garrido}. By our definition, inversions are elliptic: If $g$ inverts an edge $e$, then $l(g) = 0$ and $\Min(g)$ is the midpoint of $e$. If all nontrivial elements of $G$ are hyperbolic, then $G$ is \emph{purely hyperbolic}.

This section closely follows Nielsen's proof of his Subgroup Theorem, which states that a finitely generated subgroup of a free group is free \cite[Chapter I.2]{lyndon-schupp}. In particular, Nielsen uses cancellation of words in a free group; we use cancellation of paths on a tree. The use of conditions N1, N2, and N3 (see Definition \ref{def:n-reduced}), and the pre-well-ordering defined on $G$ (see Definition \ref{def:more-orderings}), remains essentially the same. In our case, an added complication is the possible existence of elliptic elements.

\begin{definition}\label{def:delta}
  If $g, h \in G$, then $\delta(g, h) = (\abs{g} + \abs{h} - \abs{g^{-1}h}$)/2. 
\end{definition}
Note that $\delta(g, h) = \delta(h, g)$, but $\delta(g, h) \neq \delta(g^{-1}, h^{-1})$ in general.
\begin{proposition}\label{prop:delta-is-intersection}
  For all $g, h \in G$, $\delta(g, h) = \abs{[\vo, g\vo] \cap [\vo, h\vo]}$.
\end{proposition}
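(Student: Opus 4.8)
The plan is to reduce the statement to a single geometric fact about trees, namely that the three geodesics joining $\vo$, $g\vo$, and $h\vo$ share a common ``median'' point. First I would rewrite $\delta(g,h)$ purely in terms of the distances between these three points. Since every element of $G$ acts on $T$ as an isometry, $\abs{g^{-1}h} = d(\vo, g^{-1}h\vo) = d(g\vo, h\vo)$, so that
\[
  2\,\delta(g,h) = d(\vo, g\vo) + d(\vo, h\vo) - d(g\vo, h\vo).
\]
Thus the claim becomes the assertion that this (necessarily nonnegative) quantity equals twice the length of the overlap $[\vo, g\vo] \cap [\vo, h\vo]$, which is the usual Gromov-product identity in a tree.

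Next I would analyse the intersection $q := [\vo, g\vo] \cap [\vo, h\vo]$. Because $T$, identified with its geometric realisation, is uniquely geodesic and its geodesics may branch but never reconnect, the intersection of two geodesic segments is again a (possibly degenerate) geodesic segment; since both of our segments contain $\vo$, the set $q$ is a nonempty subpath of $[\vo, g\vo]$ having $\vo$ as one endpoint. Writing $q = [\vo, m]$, the maximality of $q$ forces the subpaths $[m, g\vo]$ and $[m, h\vo]$ to meet only at $m$, so their concatenation is a path without backtracking and hence equals $[g\vo, h\vo]$. Degenerate cases (for example $m = \vo$, or $g\vo = h\vo$) need no separate treatment, since then one or more of the segments in question is a single point and the relations below still hold.

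Finally I would read off the three additivity relations coming from $m \in [\vo, g\vo]$, from $m \in [\vo, h\vo]$, and from $[g\vo, h\vo] = [g\vo, m] \cup [m, h\vo]$:
\begin{align*}
  d(\vo, g\vo) &= d(\vo, m) + d(m, g\vo), &
  d(\vo, h\vo) &= d(\vo, m) + d(m, h\vo), \\
  d(g\vo, h\vo) &= d(g\vo, m) + d(m, h\vo),
\end{align*}
and substitute them into the displayed formula for $2\,\delta(g,h)$; every term cancels except $2\,d(\vo, m)$, giving $\delta(g,h) = d(\vo, m) = \abs{[\vo, m]} = \abs{q}$, as required.

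The only real obstacle is the tree-geometry step of the second paragraph: verifying that $[\vo, g\vo] \cap [\vo, h\vo]$ really is an initial subpath of both geodesics and that the complementary pieces reassemble into $[g\vo, h\vo]$. This is standard — it is essentially the existence and elementary properties of medians in trees — but it merits a careful statement because we are working in the geometric realisation, where these segments may terminate at interior points of edges rather than at vertices; once this is in place, the remainder is bookkeeping with the additivity of distance along geodesics.
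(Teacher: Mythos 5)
Your proposal is correct and follows essentially the same route as the paper: identify the median point $w$ where the two geodesics from $\vo$ branch, note $[g\vo, h\vo] = [g\vo, w] \cup [w, h\vo]$, use the isometric action to replace $\abs{g^{-1}h}$ with $d(g\vo, h\vo)$, and cancel distances to get $\delta(g,h) = d(\vo, w)$. The extra care you take in justifying the median decomposition is fine but the paper simply reads it off from the tree structure (its Figure 1).
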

\begin{proof}
  Let $w$ be the vertex of $T$ such that $[\vo, g\vo] \cap [\vo, h\vo] = [\vo, w]$. Now $[g\vo, h\vo] = [g\vo, w] \cup [w, h\vo]$; see Figure \ref{fig:paths}. We deduce that
  \begin{align*}
    \delta(g, h) &= (d(\vo, g\vo) + d(\vo, h\vo) - d(\vo, g^{-1}h\vo))/2 \\
    &= (d(\vo, g\vo) + d(\vo, h\vo) - d(g\vo, h\vo))/2 \\
    &= (d(\vo, g\vo) + d(\vo, h\vo) - d(w, g\vo) - d(w, h\vo))/2 \\
    &= d(\vo, w) \\
    &= \abs{[\vo, g\vo] \cap [\vo, h\vo]}. \qedhere
  \end{align*}
\end{proof}
\begin{figure}[ht]
  \centering
  \tikzfig{tikzit/tree1}
  \caption{$\delta(g, h)$ is the length of $[\vo, g\vo] \cap [\vo, h\vo]$}
  \label{fig:paths}
\end{figure}
\begin{proposition}\label{prop:translation-length-formula}
  For all $g \in G$, $l(g) = \abs{g^2}-\abs{g} = \abs{g} - 2\delta(g^{-1}, g)$.
\end{proposition}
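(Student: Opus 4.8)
The second equality is immediate from Definition~\ref{def:delta}. Applying the isometry $g$ to the pair $(\vo, g^{-1}\vo)$ gives $\abs{g^{-1}} = d(\vo, g^{-1}\vo) = d(g\vo, \vo) = \abs{g}$, and $(g^{-1})^{-1}g = g^2$, so $2\delta(g^{-1}, g) = \abs{g^{-1}} + \abs{g} - \abs{g^2} = 2\abs{g} - \abs{g^2}$, which rearranges to $\abs{g} - 2\delta(g^{-1}, g) = \abs{g^2} - \abs{g}$. The content is therefore to identify $l(g)$ with $\abs{g} - 2\delta(g^{-1}, g)$, and I would do this by a cancellation argument on the three points $\vo$, $g\vo$, $g^2\vo$, staying close to the paper's own toolkit.

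By Proposition~\ref{prop:delta-is-intersection} write $[\vo, g\vo] \cap [\vo, g^{-1}\vo] = [\vo, c]$, so that $d(\vo, c) = \delta(g^{-1}, g)$. Applying $g$ to the inclusion $[\vo, c] \subseteq [\vo, g^{-1}\vo]$ and using $\abs{g^{-1}} = \abs{g}$ gives $d(\vo, gc) = d(c, g^{-1}\vo) = \abs{g} - \delta(g^{-1}, g)$, while $d(gc, g\vo) = d(c, \vo) = \delta(g^{-1}, g)$; since these sum to $\abs{g} = d(\vo, g\vo)$, the point $gc$ lies on $[\vo, g\vo]$ as well, at distance $\delta(g^{-1}, g)$ from $g\vo$. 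Hence $c$ and $gc$ both lie on $[\vo, g\vo]$ and $d(c, gc) = \lvert\, \abs{g} - 2\delta(g^{-1}, g)\,\rvert$. Suppose now $\abs{g} > 2\delta(g^{-1}, g)$. Then I would show the concatenation of the segments $[g^{i}c, g^{i+1}c]$ ($i \in \Z$) is a reduced line: applying powers of $g$, it suffices to rule out backtracking at $c$, and this holds because Proposition~\ref{prop:delta-is-intersection} forces $[\vo, g\vo]$ and $[\vo, g^{-1}\vo]$ to leave $c$ along distinct edges while $gc$ and $g^{-1}c$ lie on opposite sides of $c$. This reduced line is $g$-invariant and $g$ translates along it by $d(c, gc) > 0$, so it is the axis of $g$ and $l(g) = d(c, gc) = \abs{g} - 2\delta(g^{-1}, g) = \abs{g^2} - \abs{g}$. (Equivalently, one can invoke the standard identity $d(w, gw) = l(g) + 2\,d(w, \Min(g))$ applied to $g$ and to $g^2$, using that $g$ and $g^2$ share an axis when $g$ is hyperbolic.)

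The step I expect to be the main obstacle is the complementary case $\abs{g} \le 2\delta(g^{-1}, g)$, equivalently $\abs{g^2} \le \abs{g}$. Here $c$ lies at or beyond $g\vo$, the orbit $\{g^{i}c\}$ backtracks, and $g$ turns out to be elliptic, so $l(g) = 0$; one then has $\Min(g) = \Fix(g) \subseteq \Fix(g^2) = \Min(g^2)$, and the identity reduces to the claim $d(\vo, \Fix(g)) = d(\vo, \Fix(g^2))$ — that the geodesic from $\vo$ into $\Fix(g^2)$ already reaches $\Fix(g)$. This local analysis at $\vo$, where the clean no-backtracking argument breaks down, is the delicate point, and it is also where inversions (with $\Min(g)$ an edge midpoint rather than a vertex) must be handled. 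Since $\abs{g^2} - \abs{g}$ can be strictly negative for elliptic $g$ while $l(g) \ge 0$, I would also double-check that in this regime the right-hand side is intended with a $\max\{0, \abs{g^2} - \abs{g}\}$, or that the surrounding hypotheses exclude it.
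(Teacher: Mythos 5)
Your treatment of the hyperbolic case is correct, but it takes a genuinely different route from the paper. The paper's proof is a two-line computation: it cites Serre for the fact that the midpoint of $[\vo, g\vo]$ lies in $\Min(g)$, sets $w = \proj_{\Min(g)}(\vo)$, and splits $d(\vo, g^2\vo)$ along $[\vo, w] \cup [w, g^2 w] \cup [g^2 w, g^2\vo]$ to get $\abs{g^2} = \abs{g} + l(g)$ at once. You instead rebuild the axis from scratch: taking $c$ with $[\vo, g\vo] \cap [\vo, g^{-1}\vo] = [\vo, c]$ (Proposition \ref{prop:delta-is-intersection}), you place $c$ and $gc$ on $[\vo, g\vo]$, rule out backtracking at the translates $g^i c$, and obtain a $g$-invariant line translated by $\abs{g} - 2\delta(g^{-1}, g)$. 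This is the classical ``fundamental computation'' for isometries of trees; it is longer but self-contained (it re-derives the Serre input rather than citing it) and directly gives the equivalence between hyperbolicity and $\abs{g} > 2\delta(g^{-1}, g)$ that the paper needs later.

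Your hesitation about the elliptic case is not a defect of your attempt: the stated identity is genuinely false there. For an involution $g$ fixing a vertex but not $\vo$ one has $l(g) = 0$ while $\abs{g^2} - \abs{g} = -\abs{g} < 0$. The paper's first displayed equality silently assumes that $[\vo, g^2\vo]$ passes through $w$ and $g^2 w$; this is guaranteed when $g$ is hyperbolic, since then the projections $w$ and $g^2 w$ of $\vo$ and $g^2\vo$ onto the axis are distinct, but it can fail when $g$ is elliptic, where $g^2 w = w$ and $\Min(g^2)$ may come strictly closer to $\vo$ than $\Min(g)$. The correct general statement is $l(g) = \max\{0, \abs{g^2} - \abs{g}\}$, exactly the repair you suggest; the elliptic half is the one-line estimate $\abs{g^2} \leq \abs{g}$, which follows from $\Min(g) \subseteq \Min(g^2)$ and $\abs{h} = 2\,d(\vo, \Min(h))$ for elliptic $h$. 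This weaker form is all that is used downstream (Proposition \ref{prop:reduced-implies-free} and the elliptic tests in the proof of correctness of Algorithm \ref{alg:reduce}). So your proposal, completed with that estimate, proves everything the paper actually relies on, whereas neither your argument nor the paper's establishes the equality for all elliptic $g$ --- because it does not hold.
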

\begin{proof}
  By \cite[Chapter I.6.4, Propositions 23 and 24]{serre-trees}, the midpoint of $[\vo, g\vo]$ lies in $\Min(g)$. Let $w = \proj_{\Min(g)}(\vo)$. Now,
  \begin{align*}
    d(\vo, g^2\vo) &= d(\vo, w) + d(w, g^2w) + d(g^2w, g^2\vo) \\
    &= d(\vo, w) + 2d(w, gw) + d(g\vo, gw) \\
    &= d(\vo, g\vo) + l(g).
  \end{align*}
  This proves the first equality; the second follows from Definition \ref{def:delta}.
\end{proof}
Let $g = a_1 a_2 \dots a_n$ be a reduced word in $X$. Each $a_i$ may be identified with the path $p_i = [b_{i}\vo, b_{i} a_i \vo]$, where $b_i = a_1 \dots a_{i-1}$. We say that $p_i$ is the \emph{path of} $a_i$ in $g$.
Note that a walk from $\vo$ to $g\vo$ is formed by the concatenation of the $p_i$, each of which is isometric to $[\vo, a_i\vo]$. If a subpath $q$ of $p_i$ of nonzero length is disjoint from the interior of $[\vo, g\vo]$, then it is \emph{cancelled} in $g$. If $q$ is contained in $[\vo, g\vo]$, then it is \emph{uncancelled} in $g$.
\begin{proposition}\label{prop:delta-is-intersect-2}
  If $x, y \in X^\pm$, then $[\vo, x\vo] \cup [x\vo, xy\vo] = [\vo, xy\vo] \cup p$, where $p = [\vo, x\vo] \cap [x\vo, xy\vo]$ is a (possibly empty) path of length $\delta(x^{-1}, y)$.
\end{proposition}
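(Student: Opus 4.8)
The plan is to work with the three vertices $\vo$, $x\vo$, $xy\vo$ and the tripod they span in $T$. First I would introduce the median vertex $w$ of $\vo$, $x\vo$, $xy\vo$, i.e.\ the unique vertex at which the three geodesics between these points meet, so that
\[
  [\vo, x\vo] = [\vo, w] \cup [w, x\vo], \quad [x\vo, xy\vo] = [x\vo, w] \cup [w, xy\vo], \quad [\vo, xy\vo] = [\vo, w] \cup [w, xy\vo],
\]
where in each union the two subpaths meet only at $w$, and the three ``branches'' at $w$ (towards $\vo$, towards $x\vo$, towards $xy\vo$) are pairwise disjoint apart from $w$. Since $[\vo, x\vo]$ and $[x\vo, xy\vo]$ both terminate at $x\vo$, convexity of geodesics in a tree forces their intersection to be exactly the common subpath $[w, x\vo]$; thus $p = [\vo, x\vo] \cap [x\vo, xy\vo] = [w, x\vo]$, which is a genuine path when $w \neq x\vo$ and the degenerate (``empty'') path $\{x\vo\}$ when $w = x\vo$.

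The set equality is then a direct substitution. Writing $[\vo, x\vo] = [\vo, w] \cup p$ and $[x\vo, xy\vo] = p \cup [w, xy\vo]$ and using the decomposition of $[\vo, xy\vo]$ above,
\[
  [\vo, x\vo] \cup [x\vo, xy\vo] = \bigl([\vo, w] \cup [w, xy\vo]\bigr) \cup p = [\vo, xy\vo] \cup p.
\]

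For the length of $p$ I would invoke Proposition~\ref{prop:delta-is-intersection} together with the fact that $x$ acts on $T$ as an isometry. We have $\delta(x^{-1}, y) = \abs{[\vo, x^{-1}\vo] \cap [\vo, y\vo]}$; applying $x$ sends $[\vo, x^{-1}\vo]$ to $[x\vo, \vo]$ and $[\vo, y\vo]$ to $[x\vo, xy\vo]$, and preserves intersections and lengths, so $\delta(x^{-1}, y) = \abs{[x\vo, \vo] \cap [x\vo, xy\vo]} = \abs{p}$. (Alternatively one can expand $\delta(x^{-1}, y) = (\abs{x} + \abs{y} - \abs{xy})/2$, using $\abs{x^{-1}} = \abs{x}$, and read off $2\abs{p}$ from the three decompositions of $\abs{x}$, $\abs{y}$, $\abs{xy}$ given above.) I do not expect a genuine obstacle here: the argument is elementary tree geometry, and the only points needing slight care are the degenerate case $\delta(x^{-1}, y) = 0$, where $p$ collapses to the point $\{x\vo\}$ and one should check the stated equality still reads correctly, and the routine observation that $x$ being an isometry gives $\abs{x^{-1}} = \abs{x}$.
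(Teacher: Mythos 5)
Your proposal is correct and follows essentially the same route as the paper: the length claim via Proposition~\ref{prop:delta-is-intersection} transported by the isometry $x$ is exactly the paper's argument, and your explicit median/tripod decomposition is just a written-out version of what the paper delegates to Figure~\ref{fig:product-cancel} for the union equality. The only cosmetic difference is that you spell out the degenerate case ($p$ a single vertex) which the paper glosses as ``possibly empty''.
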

\begin{proof}
  By Proposition \ref{prop:delta-is-intersection}, $\delta(x^{-1}, y) = \abs{[\vo, x^{-1}\vo] \cap [\vo, y\vo]} = \abs{[\vo, x\vo] \cap [x\vo, xy\vo]}$. We see from Figure \ref{fig:product-cancel} that $[\vo, x\vo] \cup [x\vo, xy\vo] = [\vo, xy\vo] \cup p$.
\end{proof}
\begin{figure}[ht]
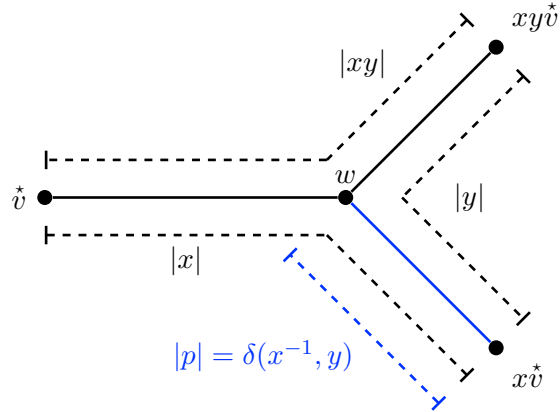

  \centering
  \tikzfig{tikzit/tree2}
  \caption{The path $p$ is cancelled in $xy$, while $[\vo, w]$ and $[w, xy\vo]$ are uncancelled}
  \label{fig:product-cancel}
\end{figure}
\begin{definition}\label{def:n-reduced}
  Following the notation of  \cite[Chapter I.2]{lyndon-schupp}, $X$ is \emph{Nielsen-reduced} (or \emph{\reduced} for short) if it has no non-trivial elliptic elements and the following are satisfied:
  \begin{enumerate}
    \item[N1.] $X \cap X^- = \varnothing$.
    \item[N2.] For all $x, y \in X^{\pm}$, if $x \neq y^{-1}$, then $\abs{xy} \geq \max\{\abs{x}, \abs{y}\}$.
    \item[N3.] For all $x, y, z \in X^{\pm}$, if $x \neq y^{-1}$ and $y \neq z^{-1}$, then $\abs{xyz} > \abs{x} + \abs{z} - \abs{y}$.
  \end{enumerate}
\end{definition}
\begin{remark}
  We use a stronger definition of N1 than \cite[Chapter I.2]{lyndon-schupp}; that definition allows both $x$ and $x^{-1}$ to be in $X$.
\end{remark}
\begin{lemma}\label{lem:no-overlap-conditions}
  Let $x, y, z \in X^\pm$. Let $p = [\vo, x\vo] \cap [x\vo, xy\vo]$ and $q = [x\vo, xy\vo] \cap [xy\vo, xyz\vo]$.
  Let $\Delta = \abs{y} - \delta(x^{-1}, y) - \delta(y^{-1}, z)$. If $\Delta > 0$, then $p$ and $q$ do not intersect and $d(p, q) = \Delta$.
  If $\Delta \leq 0$, then $p \cap q$ is a path of length $-\Delta$.
\end{lemma}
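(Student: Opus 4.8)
The plan is to realise both $p$ and $q$ as subpaths of the middle segment $[x\vo, xy\vo]$ and then reduce the statement to elementary bookkeeping of positions along a geodesic. First I would observe that $[x\vo, xy\vo] = x[\vo, y\vo]$, so this segment has length $\abs{y}$; it is a geodesic in $T$, hence for points lying on it the ambient distance in $T$ coincides with the distance measured along the segment.

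Next I would pin down $p$ and $q$ precisely. By Proposition~\ref{prop:delta-is-intersect-2}, $p = [\vo, x\vo] \cap [x\vo, xy\vo]$ is a subpath of $[x\vo, xy\vo]$ of length $\delta(x^{-1}, y)$; since it also contains the endpoint $x\vo$ of this segment, it must be the initial subpath $[x\vo, a]$ with $d(x\vo, a) = \delta(x^{-1}, y)$. Applying the isometry $(xy)^{-1}$ and Proposition~\ref{prop:delta-is-intersection}, the length of $q = [x\vo, xy\vo] \cap [xy\vo, xyz\vo]$ equals $\abs{[\vo, y^{-1}\vo] \cap [\vo, z\vo]} = \delta(y^{-1}, z)$; and since $q$ contains the endpoint $xy\vo$, it is the terminal subpath $[b, xy\vo]$ with $d(xy\vo, b) = \delta(y^{-1}, z)$, that is, $d(x\vo, b) = \abs{y} - \delta(y^{-1}, z)$.

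Now everything is controlled by the quantity $d(x\vo, b) - d(x\vo, a) = \abs{y} - \delta(y^{-1},z) - \delta(x^{-1},y) = \Delta$. If $\Delta > 0$ then $a$ lies strictly between $x\vo$ and $b$ on the segment, so $p$ and $q$ are disjoint; for any $u \in p$ and $v \in q$ we get $d(u,v) = d(x\vo, v) - d(x\vo, u) \geq d(x\vo, b) - d(x\vo, a) = \Delta$ with equality at $u = a$, $v = b$, so $d(p,q) = \Delta$. If $\Delta \leq 0$ then $b$ lies between $x\vo$ and $a$ (inclusive), so $p \cap q = [x\vo, a] \cap [b, xy\vo] = [b, a]$, a path of length $d(x\vo, a) - d(x\vo, b) = -\Delta$; the borderline case $\Delta = 0$ gives the single point $a = b$.

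I do not anticipate a serious obstacle here: the content is entirely that $p$ and $q$ sit on a common geodesic at known distances from its two endpoints, so the only care needed is in justifying that the intersection of two segments each sharing an endpoint with a third is an initial (resp.\ terminal) subsegment of that third, and that $T$-distances between points of a geodesic are computed along that geodesic — both standard facts about trees. A picture (cf.\ Figure~\ref{fig:product-cancel}) makes the case split transparent.
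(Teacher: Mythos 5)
Your proof is correct and follows essentially the same route as the paper: both identify $p$ and $q$ as the initial and terminal subsegments of the common geodesic $[\vo[x], xy\vo]$... more precisely, of $[x\vo, xy\vo]$, of lengths $\delta(x^{-1},y)$ and $\delta(y^{-1},z)$ via Proposition~\ref{prop:delta-is-intersect-2}, and then reduce the claim to one-dimensional bookkeeping (the paper phrases this via an isometric embedding of $[x\vo, xy\vo]$ into $\R$, you via distances from $x\vo$ along the segment). No substantive difference.
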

\begin{proof}
  Let $\phi$ be the isometric embedding of $[x\vo, xy\vo]$ into $\R$ such that $\phi(x\vo) = 0$ and $\phi(xy\vo) = \abs{y}$. We identify the image of a path under $\phi$ with the image of the points it contains. By Proposition \ref{prop:delta-is-intersect-2}, $\phi(p) = [0, \delta(x^{-1}, y)]$ and $\phi(q) = [\abs{y}-\delta(y^{-1}, z), \abs{y}]$. The result follows by computing either $d(\phi(p), \phi(q))$ or the length of $\phi(p) \cap \phi(q)$.
\end{proof}
\begin{definition}
  We define a similar set of conditions to N2 and N3 on $X$:
  \begin{enumerate}
    \item[N2$'$.] For all $x, y \in X^{\pm}$, if $x \neq y^{-1}$, then $\delta(x^{-1}, y) \leq \min\{\abs{x}/2, \abs{y}/2\}$.
    \item[N3$'$.] For all $x, y, z \in X^{\pm}$, if $x \neq y^{-1}$ and $y \neq z^{-1}$, then $\delta(x^{-1}, y) + \delta(y^{-1}, z) < \abs{y}$.
  \end{enumerate}
\end{definition}
\begin{proposition}\label{prop:equivalent-axioms}\leavevmode
  \begin{enumerate}
    \item $\textup{N2}$ is equivalent to $\textup{N2}'$.
    \item If $\textup{N2}$ holds, then $\textup{N3}$ is equivalent to $\textup{N3}'$.
  \end{enumerate}
\end{proposition}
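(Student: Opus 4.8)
The plan is to verify both statements \emph{locally}: for each single pair (resp.\ triple) of elements of $X^\pm$ to which the hypotheses apply, I show that the \textup{N2}-inequality holds if and only if the \textup{N2}$'$-inequality holds (resp.\ that the \textup{N3}-inequality holds if and only if the \textup{N3}$'$-inequality holds). Since the two conditions in each part quantify over exactly the same tuples, the stated equivalences follow at once. For part~(1), fix $x,y\in X^\pm$ with $x\neq y^{-1}$. As $\abs{x^{-1}}=d(\vo,x^{-1}\vo)=d(x\vo,\vo)=\abs{x}$, Definition~\ref{def:delta} gives $\delta(x^{-1},y)=\tfrac{1}{2}\bigl(\abs{x}+\abs{y}-\abs{xy}\bigr)$, so that $\delta(x^{-1},y)\le\abs{x}/2\iff\abs{xy}\ge\abs{y}$ and $\delta(x^{-1},y)\le\abs{y}/2\iff\abs{xy}\ge\abs{x}$. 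The conjunction of the two right-hand conditions is exactly $\abs{xy}\ge\max\{\abs{x},\abs{y}\}$, so \textup{N2} for $(x,y)$ is equivalent to \textup{N2}$'$ for $(x,y)$.

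For part~(2), assume \textup{N2}, hence \textup{N2}$'$. Fix $x,y,z\in X^\pm$ with $x\neq y^{-1}$ and $y\neq z^{-1}$, and set $\alpha=\delta(x^{-1},y)$, $\beta=\delta(y^{-1},z)$ and $\Delta=\abs{y}-\alpha-\beta$, so that \textup{N3}$'$ for this triple is the assertion $\Delta>0$. Applying \textup{N2}$'$ to the pairs $(x,y)$ and $(y,z)$ gives $\alpha\le\abs{y}/2$ and $\beta\le\abs{y}/2$, and hence $\Delta\ge0$. Consider the walk $\vo=w_0$, $w_1=x\vo$, $w_2=xy\vo$, $w_3=xyz\vo$ formed by concatenating the geodesics $[w_0,w_1]$, $[w_1,w_2]$, $[w_2,w_3]$, and let $p=[w_0,w_1]\cap[w_1,w_2]$ and $q=[w_1,w_2]\cap[w_2,w_3]$ be the paths of Lemma~\ref{lem:no-overlap-conditions}. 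By Proposition~\ref{prop:delta-is-intersect-2} (and, for $q$, the same statement after translating by $(xy)^{-1}$), $p$ and $q$ are paths of lengths $\alpha$ and $\beta$, with $w_1\in p$ and $w_2\in q$. Let $a,b\in[w_1,w_2]$ be their other endpoints; then $[w_0,w_1]=[w_0,a]\cup[a,w_1]$ and $[w_2,w_3]=[w_2,b]\cup[b,w_3]$, so $d(w_0,a)=\abs{x}-\alpha$ and $d(w_3,b)=\abs{z}-\beta$, and furthermore $[w_0,a]$ meets $[w_1,w_2]$ only in $a$ while $[b,w_3]$ meets it only in $b$ (equivalently, $a$ and $b$ are the nearest-point projections of $w_0$ and $w_3$ onto $[w_1,w_2]$).

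It remains to prove that the \textup{N3}-inequality $\abs{xyz}>\abs{x}+\abs{z}-\abs{y}$ holds if and only if $\Delta>0$. If $\Delta>0$, then Lemma~\ref{lem:no-overlap-conditions} gives $a\neq b$ with $d(a,b)=\Delta$; since a point lying on both $[w_0,a]$ and $[b,w_3]$ would have nearest-point projection onto $[w_1,w_2]$ equal to both $a$ and $b$, these segments are disjoint, and therefore $[w_0,a]\cup[a,b]\cup[b,w_3]$ is an embedded path, hence equal to $[w_0,w_3]$. Thus $\abs{xyz}=(\abs{x}-\alpha)+\Delta+(\abs{z}-\beta)=\abs{x}+\abs{y}+\abs{z}-2\alpha-2\beta$, which strictly exceeds $\abs{x}+\abs{z}-\abs{y}$ precisely because $2\Delta>0$. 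If instead $\Delta=0$, then $\alpha+\beta=\abs{y}$; together with $\alpha,\beta\le\abs{y}/2$ this forces $\alpha=\beta=\abs{y}/2$ and $a=b$, so $\abs{xyz}=d(w_0,w_3)\le d(w_0,a)+d(a,w_3)=(\abs{x}-\alpha)+(\abs{z}-\beta)=\abs{x}+\abs{z}-\abs{y}$ and the \textup{N3}-inequality fails. In either case the \textup{N3}-inequality holds exactly when $\Delta>0$, that is, exactly when \textup{N3}$'$ holds, and part~(2) follows. The one delicate point is the case $\Delta>0$, where one must exclude any cancellation in the walk beyond $p$ and $q$; this is precisely the disjointness of the two branches off $[w_1,w_2]$ noted above, and everything else is routine bookkeeping with $\delta$.
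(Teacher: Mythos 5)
Your proof is correct and takes essentially the same route as the paper: part (1) is the same computation with $\delta$, and part (2) introduces the same quantity $\Delta$, uses N2$'$ to get $\Delta \geq 0$, and applies Lemma \ref{lem:no-overlap-conditions} to the concatenation of the geodesics $[\vo, x\vo]$, $[x\vo, xy\vo]$, $[xy\vo, xyz\vo]$ to conclude that N3 holds exactly when $\Delta > 0$. The only difference is that you write out explicitly the geometric bookkeeping (disjointness of the two branches off $[x\vo, xy\vo]$, and the $\Delta = 0$ case handled via a triangle inequality rather than an exact length formula) that the paper delegates to Figure \ref{fig:axioms}~(b).
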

\begin{proof}
  Let $x, y, z \in X^{\pm}$.
  \begin{enumerate}
    \item
    Observe that $\delta(x^{-1}, y) = (\abs{x} + \abs{y} - \abs{xy})/2 \leq \min\{\abs{x}/2, \abs{y}/2\}$ if and only if $\abs{xy} \ge \max\{\abs{x}, \abs{y}\}$. See Figure \ref{fig:axioms} (a).
    \item
    Let $\Delta = \abs{y} - \delta(x^{-1}, y) - \delta(y^{-1}, z)$. By N2$'$, $\Delta \geq 0$. By Lemma \ref{lem:no-overlap-conditions}, we have the situation in Figure \ref{fig:axioms} (b): Namely, $\abs{xyz} = \abs{x} + \abs{z} - \abs{y} + \Delta$. Thus N3 holds if and only if $\Delta > 0$, which in turn is true if and only if N3$'$ holds. \qedhere
  \end{enumerate}
\end{proof}
\begin{figure}[ht]
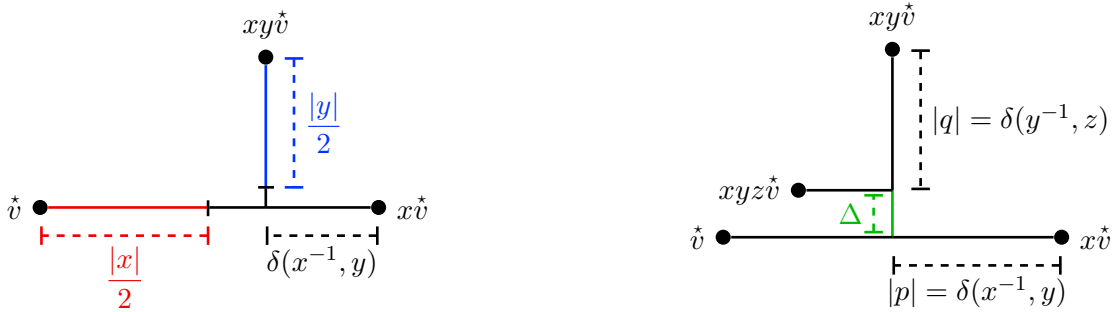

  \centering
  \begin{subfigure}{0.45\textwidth}
    \centering
    \tikzfig{tikzit/a2}
    \caption{\centering N2$'$: At least half of the paths of both $x$ and $y$ are uncancelled in $xy$}
    \label{fig:a2}
  \end{subfigure}%
  \hfill
  \begin{subfigure}{0.45\textwidth}
    \centering
    \tikzfig{tikzit/a3}
    \caption{\centering N3$'$: The path of $y$ has a subpath of length $\Delta$ uncancelled in $xyz$}
    \label{fig:a3}
  \end{subfigure}
  \caption{Examples of conditions N2$'$ and N3$'$}
  \label{fig:axioms}
\end{figure}
\begin{remark}
The conditions N2$'$ and N3$'$ can be interpreted geometrically: N2$'$ states that at least half of the path of $x$ (similarly $y$) must be uncancelled in $xy$; N3$'$ states that there must be a subpath of the path of $y$ uncancelled in $xyz$.
\end{remark}
\begin{definition}
  A \emph{Nielsen transformation} of $X$ is a composition of the following operations:
  \begin{enumerate}
    \item Remove some $g \in X$, where both $g$ and $g^{-1}$ are in $X$.
    \item Replace some $g \in X$ with $g^{-1}$.
    \item Replace some $g \in X$ by $g^{\e_1} h^{\e_2}$ or $h^{e_2} g^{e_1}$, where $h \in X \setminus \{g\}$ and $\e_1, \e_2 \in \{1, -1\}$.
  \end{enumerate}
  This definition of a Nielsen transformation is equivalent to the usual definition (as seen in \cite[Chapter I.2]{lyndon-schupp}), but is more useful for this paper.
\end{definition}
We sometimes refer to a replacement of $g$ by $h$ where $g \in X^{\pm}$. This denotes either a replacement of $g$ by $h$ or of $g^{-1}$ by $h^{-1}$, depending on whether $g$ or $g^{-1}$ is in $X$. Note that if $X'$ is obtained from $X$ via a Nielsen transformation, then $\gen{X'} = \gen{X}$.

The following lemma demonstrates a \emph{local-to-global} phenomenon that can be similarly observed in Nielsen's algorithm: A path cancelled in a word $g$ in an \reduced set is isometric to a path cancelled in a subword of length 2. This allows us to express $\abs{g}$ in terms of the lengths of cancellations between consecutive terms.
\begin{lemma}\label{lem:no-overlap}
  Suppose $X$ is \reduced. Let $g = a_1 a_2 \dots a_n$ be a reduced word in $X$. Define $g_i = a_1 \dots a_i$.
  \begin{enumerate}
    \item If $n \geq 2$, then $[\vo, g_{n-2}\vo] \cap [g_{n-1}\vo, g\vo] = \varnothing$.
    \item If $n \geq 2$, then $[\vo, g_{n-1} \vo] \cap [g_{n-1} \vo, g\vo] = [g_{n-2} \vo, g_{n-1} \vo] \cap [g_{n-1} \vo, g\vo]$, and in particular $\delta(g_{n-1}^{-1}, a_n) = \delta(a_{n-1}^{-1}, a_n)$.
    \item $\abs{g} = \sum\limits_{i = 1}^n \abs{a_i} - 2\sum\limits_{i = 1}^{n-1} \delta(a_i^{-1}, a_{i+1})$.
  \end{enumerate}
\end{lemma}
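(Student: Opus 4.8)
The plan is to prove (1) and (2) simultaneously by induction on $n$, and then to deduce (3) from (2) by a short separate induction. The only tree fact needed beyond what is already set up is the median structure of three points $u,v,w$ of $T$ (as used implicitly in the proof of Proposition~\ref{prop:delta-is-intersection}): there is a point $c$ with $[u,v]=[u,c]\cup[c,v]$, $[v,w]=[v,c]\cup[c,w]$ and $[u,w]=[u,c]\cup[c,w]$, the three segments $[u,c]$, $[v,c]$, $[w,c]$ meeting pairwise only in $c$. For the base case $n=2$, statement (2) is vacuous, since $g_0$ is trivial and $[g_0\vo,g_1\vo]=[\vo,g_1\vo]$; and (1) asks that $\vo\notin[a_1\vo,a_1a_2\vo]$. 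This follows from Proposition~\ref{prop:delta-is-intersect-2} with $x=a_1$, $y=a_2$ together with N2$'$ (which holds by Proposition~\ref{prop:equivalent-axioms} since $X$ is \reduced): the cancelled path $[\vo,a_1\vo]\cap[a_1\vo,a_1a_2\vo]$ has length $\delta(a_1^{-1},a_2)\le\abs{a_1}/2<\abs{a_1}$ (using $\abs{a_1}>0$, as $a_1$ is hyperbolic), so it is a proper sub-segment of $[a_1\vo,\vo]$.

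For the inductive step take $n\ge3$ and let $c$ be the median of $\vo$, $g_{n-2}\vo$, $g_{n-1}\vo$. By Proposition~\ref{prop:delta-is-intersection}, $\abs{[g_{n-2}\vo,c]}=\delta(g_{n-2}^{-1},a_{n-1})$, and applying the inductive hypothesis (2) to the reduced word $a_1\cdots a_{n-1}$ of length $n-1\ge2$ identifies this with $\delta(a_{n-2}^{-1},a_{n-1})$; hence $\abs{[c,g_{n-1}\vo]}=\abs{a_{n-1}}-\delta(a_{n-2}^{-1},a_{n-1})$. As $g$ is reduced we have $a_{n-2}\ne a_{n-1}^{-1}$ and $a_{n-1}\ne a_n^{-1}$, so N3$'$ gives the key inequality
\[
  \delta(a_{n-1}^{-1},a_n)\;<\;\abs{a_{n-1}}-\delta(a_{n-2}^{-1},a_{n-1})\;=\;\abs{[c,g_{n-1}\vo]}.
\]
Let $P=[g_{n-1}\vo,g\vo]$ be the path of $a_n$ in $g$. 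Applying the isometry $g_{n-2}$ to Proposition~\ref{prop:delta-is-intersect-2} with $x=a_{n-1}$, $y=a_n$ shows that $P\cap[g_{n-2}\vo,g_{n-1}\vo]$ is an initial segment $[g_{n-1}\vo,x_0]$ of $P$ of length $\delta(a_{n-1}^{-1},a_n)$; by the displayed inequality, $x_0$ lies on $[c,g_{n-1}\vo]$ and is strictly closer to $g_{n-1}\vo$ than $c$ is. In particular $c\notin P$, for otherwise $c$, which lies on $[g_{n-2}\vo,g_{n-1}\vo]$, would lie in $P\cap[g_{n-2}\vo,g_{n-1}\vo]=[g_{n-1}\vo,x_0]$.

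Given this, (2) amounts to $P\cap[\vo,g_{n-1}\vo]=[g_{n-1}\vo,x_0]$. One inclusion is immediate, since $[g_{n-1}\vo,x_0]\subseteq[c,g_{n-1}\vo]\subseteq[\vo,g_{n-1}\vo]$. For the reverse inclusion, $P\cap[\vo,g_{n-1}\vo]$ is an initial segment of $P$, and were it strictly longer than $[g_{n-1}\vo,x_0]$, then, writing $[\vo,g_{n-1}\vo]=[\vo,c]\cup[c,g_{n-1}\vo]$, it would either remain within $[c,g_{n-1}\vo]\subseteq[g_{n-2}\vo,g_{n-1}\vo]$, contradicting the definition of $x_0$, or contain $c$, contradicting $c\notin P$; the ``in particular'' clause of (2) is the resulting equality of lengths $\delta(g_{n-1}^{-1},a_n)=\delta(a_{n-1}^{-1},a_n)$. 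For (1), write $[\vo,g_{n-2}\vo]=[\vo,c]\cup[c,g_{n-2}\vo]$: any $z\in[\vo,g_{n-2}\vo]\cap P$ lying in $[\vo,c]\subseteq[\vo,g_{n-1}\vo]$ lies in $P\cap[\vo,g_{n-1}\vo]=[g_{n-1}\vo,x_0]\subseteq[c,g_{n-1}\vo]$, forcing $z=c$; and any such $z$ lying in $[c,g_{n-2}\vo]\subseteq[g_{n-2}\vo,g_{n-1}\vo]$ lies in $[g_{n-1}\vo,x_0]\subseteq[c,g_{n-1}\vo]$, again forcing $z=c$. Either way $z=c\in P$, a contradiction, so $[\vo,g_{n-2}\vo]\cap P=\varnothing$.

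Finally, (3) is proved by induction on $n$: the cases $n\le1$ are immediate, and for $n\ge2$, Definition~\ref{def:delta} gives $\abs{g}=\abs{g_{n-1}}+\abs{a_n}-2\delta(g_{n-1}^{-1},a_n)$, which by (2) equals $\abs{g_{n-1}}+\abs{a_n}-2\delta(a_{n-1}^{-1},a_n)$; substituting the inductive formula for $\abs{g_{n-1}}$ gives the claim. The main obstacle is the tree-geometry bookkeeping in the inductive step: pinning down where the cancellation between $a_{n-1}$ and $a_n$ sits relative to the median $c$, and extracting from condition N3 (in the form N3$'$) that this cancellation cannot consume the uncancelled middle of the path of $a_{n-1}$ and reach back to $g_{n-2}\vo$. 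The remaining verifications are routine.
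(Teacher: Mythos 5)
Your proof is correct and follows essentially the same route as the paper's: induction on $n$, with the key inequality $\delta(a_{n-1}^{-1},a_n) < \abs{a_{n-1}} - \delta(a_{n-2}^{-1},a_{n-1})$ coming from N3$'$ together with the inductive ``in particular'' clause of (2), and (3) then deduced from (2) via Definition \ref{def:delta}. The only difference is presentational: where the paper invokes Lemma \ref{lem:no-overlap-conditions} and reads statements (1) and (2) off Figure \ref{fig:chain}, you carry out that tree-geometry bookkeeping explicitly via the median of $\vo$, $g_{n-2}\vo$, $g_{n-1}\vo$, which if anything makes this step more rigorous than the published argument.
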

\begin{proof}
  We proceed by induction on $n$. The cases $n \leq 2$ are trivial. Suppose $n>2$, as shown in Figure \ref{fig:chain}. By (2) and N3$'$,
  \[
    \abs{a_{n-1}} - \delta(g_{n-2}^{-1}, a_{n-1}) - \delta(a_{n-1}^{-1}, a_n) = \abs{a_{n-1}} - \delta(a_{n-2}^{-1}, a_{n-1}) - \delta(a_{n-1}^{-1}, a_n) > 0.
  \] By Lemma \ref{lem:no-overlap-conditions},
  \[
    [\vo, a_{n-2}\vo] \cap [a_{n-2}a_{n-1}\vo, a_{n-2}a_{n-1}a_n\vo] = \varnothing.
  \]
  If $n=3$, then this proves (1). If $n>3$, then by the induction hypothesis on (2),
  \begin{align*}
    &[\vo, g_{n-2}\vo] \cap [g_{n-2}\vo, g_{n-1}\vo] \cap [g_{n-1}\vo, g\vo] \\
    &= [g_{n-3}\vo, g_{n-2}\vo] \cap [g_{n-2}\vo, g_{n-1}\vo] \cap [g_{n-1}\vo, g\vo] \\ &= \varnothing.
  \end{align*}
  We therefore have the situation shown in Figure \ref{fig:chain}, proving (1) and (2). By Definition \ref{def:delta},
  \[
    \abs{g} = \abs{g_{n-1}} + \abs{a_n} - 2\delta(g_{n-1}^{-1}, a_n) = \abs{g_{n-1}} + \abs{a_n} - 2\delta(a_{n-1}^{-1}, a_n),
  \]
  from which (3) follows by induction.
\end{proof}
\begin{figure}[ht]
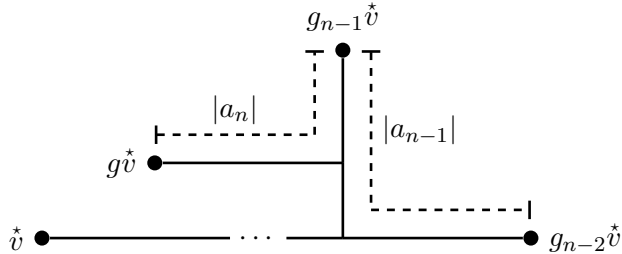

  \centering
  \tikzfig{tikzit/chain}
  \caption{The path of $g = a_1 \dots a_n$.}
  \label{fig:chain}
\end{figure}
From this we obtain a key result:
\begin{proposition}\label{prop:reduced-implies-free}
  If $X$ is \reduced, then $G$ is purely hyperbolic, and $X$ is a free basis for $G$.
\end{proposition}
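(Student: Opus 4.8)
The plan is to establish the single assertion
\[
  (\ast)\qquad\text{every nonempty reduced word in $X^\pm$ represents a hyperbolic element of $G$,}
\]
and to read off the proposition from it. Granting $(\ast)$: a nonempty reduced word never equals $1$ (as $l(1)=0$), so the canonical surjection from the free group on $X$ onto $G$ is injective and $X$ is therefore a free basis for $G$; and since every nontrivial element of $G$ is represented by a nonempty reduced word, $(\ast)$ shows that $G$ is purely hyperbolic. As a preliminary remark, N1 forces $1\notin X^\pm$, and since $X$ has no nontrivial elliptic element, no element of $X^\pm$ is an involution; hence $x\neq x^{-1}$ for every $x\in X^\pm$.

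To prove $(\ast)$, take a nonempty reduced word $g=a_1\cdots a_n$ and reduce it cyclically: while the current word has length at least $3$ and its first and last letters are mutually inverse, delete both (this replaces the represented element by a conjugate). Each such step shortens the word by $2$ and leaves it reduced and nonempty, so the process terminates at a nonempty, cyclically reduced word $h=b_1\cdots b_m$ with $m\geq 1$ — meaning each $b_ib_{i+1}$ is reduced and also $b_mb_1$ is reduced. (For $m=1$ this is exactly $b_1\neq b_1^{-1}$, which we just observed; for $m=2$ it follows from $g$ being reduced, since $h$ is a subword of $g$.) As conjugate isometries have equal translation length, $l(g)=l(h)$, so it suffices to prove $l(h)>0$.

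Set $\delta_i=\delta(b_i^{-1},b_{i+1})$, with indices read modulo $m$. The concatenation $h^2=b_1\cdots b_mb_1\cdots b_m$ is again a reduced word: its internal consecutive pairs are the pairs of $h$, and the junction $b_mb_1$ is reduced by cyclic reducedness. Applying Lemma~\ref{lem:no-overlap}(3) to $h$ and then to $h^2$ gives
\[
  \abs{h}=\sum_{i=1}^{m}\abs{b_i}-2\sum_{i=1}^{m-1}\delta_i,
  \qquad
  \abs{h^2}=2\sum_{i=1}^{m}\abs{b_i}-4\sum_{i=1}^{m-1}\delta_i-2\delta_m,
\]
so by Proposition~\ref{prop:translation-length-formula},
\[
  l(h)=\abs{h^2}-\abs{h}=\sum_{i=1}^{m}\abs{b_i}-2\sum_{i=1}^{m}\delta_i.
\]

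Finally, since $X$ is \reduced, condition N3$'$ holds by Proposition~\ref{prop:equivalent-axioms}. For each $i$ modulo $m$, the triple $(b_{i-1},b_i,b_{i+1})$ satisfies the hypotheses of N3$'$, because the two pairs it involves are among the reduced pairs $b_jb_{j+1}$ and $b_mb_1$ (when $m=1$ the triple is $(b_1,b_1,b_1)$ and one uses $b_1\neq b_1^{-1}$). Hence $\delta_{i-1}+\delta_i<\abs{b_i}$ for every $i$, and summing over $i=1,\dots,m$, where each $\delta_i$ is counted exactly twice on the left, yields $2\sum_{i=1}^{m}\delta_i<\sum_{i=1}^{m}\abs{b_i}$, so $l(h)>0$. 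This establishes $(\ast)$, and with it the proposition. The steps I expect to need the most care are the cyclic reduction and the verification that N3$'$ applies to every wrap-around triple — in particular the degenerate case $m=1$, which is where the hypothesis ``no nontrivial elliptic elements'' is actually used — together with the bookkeeping of the cancellation lengths inside $h^2$ (each internal $\delta_i$ occurring twice, the wrap-around term $\delta_m$ once).
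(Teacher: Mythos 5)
Your proposal is correct and follows essentially the same route as the paper: both reduce the claim to showing $\abs{g^2}-\abs{g}>0$ via Proposition \ref{prop:translation-length-formula} and Lemma \ref{lem:no-overlap}(3), and obtain positivity by summing the N3$'$ inequalities over all cyclic triples of consecutive letters. Your explicit cyclic-reduction step (with the $m=1$ and $m=2$ checks) is a sensible extra precaution rather than a different method, since the paper's wrap-around use of Lemma \ref{lem:no-overlap}(3) and N3$'$ tacitly assumes $a_n\neq a_1^{-1}$, i.e.\ that the word is cyclically reduced.
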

\begin{proof}
  Let $g = a_1 \dots a_n$ be a non-trivial reduced word in $X$. Since $X$ contains no elliptic elements, we may assume that $n>1$. Define $a_0 = a_n$ and $a_1 = a_{n+1}$. By Lemma \ref{lem:no-overlap} (3) and N3$'$,
  \begin{align*}
    \abs{g^2} - \abs{g}
    &= \sum\limits_{i=1}^n \abs{a_i} - 2\sum\limits_{i=1}^{n-1} \delta(a_i^{-1}, a_{i+1}) - 2\delta(a_n^{-1}, a_1) \\
    &= \sum\limits_{i=1}^n \left(\abs{a_i} - \delta(a_{i-1}^{-1}, a_{i}) - \delta(a_i^{-1}, a_{i+1})\right) \\
    &> 0.
  \end{align*}
  By Lemma \ref{prop:translation-length-formula}, $g$ is hyperbolic and therefore represents a non-trivial element of $G$. By \cite[Chapter I, Proposition 1.9]{lyndon-schupp}, $X$ is a free basis for $G$.
\end{proof}
In the remainder of this section we describe and prove the correctness of the reduction algorithm that obtains an \reduced set. A straightforward approach to such an algorithm is to apply Nielsen transformations to $X$, replacing $x$ with $xy$ such that $\abs{xy} < \abs{x}$ when possible. However, a ``tie-breaker'' is needed when $\abs{x} = \abs{xy}$, in particular when $X$ satisfies N2 but violates N3. To this end, we construct a pre-order on $G$ such that for every word $xyz$ in $X^\pm$ in which the path of $y$ is fully cancelled, either the replacement of $x$ by $xy$ or of $z$ by $yz$ will reduce the respective generator under this pre-order.
\begin{definition}
  If $g \in G$, then the \emph{initial half} of $g$ is the subpath $\half(g)$ of $[\vo, g\vo]$ with initial vertex $\vo$ of length $\floor*{\abs{g}/2}$.
\end{definition}
\begin{lemma}\label{lem:same-initial-segment}
  Let $x, y \in X$. If $\delta(x^{-1}, y) \leq \min\{\abs{x}/2, \abs{y}/2\}$ and $\abs{xy} = \abs{x}$, then $\half(x) = \half(xy)$.
\end{lemma}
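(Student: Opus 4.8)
The plan is to recognise both $\half(x)$ and $\half(xy)$ as initial segments of the paths $[\vo, x\vo]$ and $[\vo, xy\vo]$, and to argue that these two paths share a common initial segment long enough to contain both of them.

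First I would apply Proposition \ref{prop:delta-is-intersect-2}: the path $p = [\vo, x\vo] \cap [x\vo, xy\vo]$ has length $\delta(x^{-1}, y)$, it ends at $x\vo$ (being the intersection of a path ending at $x\vo$ with one starting there), and writing $w$ for its other endpoint we obtain $[\vo, x\vo] = [\vo, w] \cup p$ and, since $p$ is cancelled in $xy$, also $[\vo, xy\vo] = [\vo, w] \cup [w, xy\vo]$, exactly as in Figure \ref{fig:product-cancel}. So $[\vo, x\vo]$ and $[\vo, xy\vo]$ agree on the common initial segment $[\vo, w]$, which has length $d(\vo, w) = \abs{x} - \delta(x^{-1}, y)$.

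Next I would rewrite the hypotheses in terms of $\delta$. By Definition \ref{def:delta}, $\abs{xy} = \abs{x} + \abs{y} - 2\delta(x^{-1}, y)$, so the assumption $\abs{xy} = \abs{x}$ forces $\delta(x^{-1}, y) = \abs{y}/2$; substituting this into $\delta(x^{-1}, y) \leq \abs{x}/2$ yields $\abs{y} \leq \abs{x}$. Hence
\[
  d(\vo, w) = \abs{x} - \delta(x^{-1}, y) = \abs{x} - \abs{y}/2 \geq \abs{x} - \abs{x}/2 = \abs{x}/2 \geq \floor*{\abs{x}/2}.
\]
Finally, both $\half(x)$ and $\half(xy)$ are by definition the initial segment of length $\floor*{\abs{x}/2} = \floor*{\abs{xy}/2}$ of $[\vo, x\vo]$ and of $[\vo, xy\vo]$ respectively; since $\floor*{\abs{x}/2} \leq d(\vo, w)$ and the two ambient paths coincide on $[\vo, w]$, these two segments are equal.

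I do not expect a genuine obstacle here: the whole content is that, under the stated conditions, the cancellation $p$ of $xy$ reaches no further back than the midpoint of $[\vo, x\vo]$. The only points requiring care are identifying $p$ and its orientation correctly — so that $[\vo, w]$ really is the shared initial portion of both $[\vo, x\vo]$ and $[\vo, xy\vo]$ — and the elementary inequality $\floor*{\abs{x}/2} \leq \abs{x} - \delta(x^{-1}, y)$, which is immediate once $\delta(x^{-1}, y)$ has been expressed through the hypotheses.
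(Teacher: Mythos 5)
Your proof is correct and is essentially the paper's own argument: both identify the cancelled segment $p = [\vo, x\vo] \cap [x\vo, xy\vo]$ of length $\delta(x^{-1}, y)$ via Proposition \ref{prop:delta-is-intersect-2} and conclude that $[\vo, x\vo]$ and $[\vo, xy\vo]$ share the initial segment $[\vo, w]$ of length $\abs{x} - \delta(x^{-1}, y) \geq \abs{x}/2 = \abs{xy}/2$, which contains both halves. Your intermediate step deriving $\delta(x^{-1}, y) = \abs{y}/2$ and $\abs{y} \leq \abs{x}$ is harmless but unnecessary, since the hypothesis $\delta(x^{-1}, y) \leq \abs{x}/2$ already gives the needed bound directly, as the paper does.
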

\begin{proof}
  Let $w$ be a vertex of $T$ such that $[w, x\vo] = [\vo, x\vo] \cap [x\vo, xy\vo]$, as in Figure \ref{fig:half-path}. Then
  \[
    d(\vo, w) = \abs{x} - \delta(x^{-1}, y) \ge \abs{x}/2 = \abs{xy}/2,
  \]
  so $\half(x) = \half(xy) \subseteq [\vo, w]$.
\end{proof}
\begin{figure}[ht]
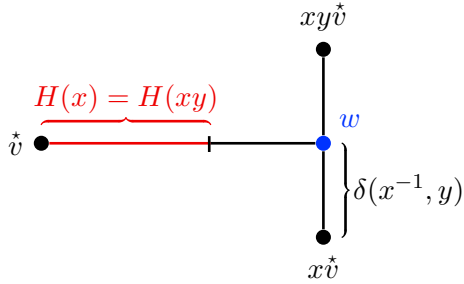

  \centering
  \tikzfig{tikzit/a3pq3}
  \caption{The initial half of $x$ and $xy$}
  \label{fig:half-path}
\end{figure}
Let $P$ be the set of paths in $T$ with initial vertex $\vo$, let $P_0 = \{(\vo)\}$, and for each $i \in \N$ let $P_i$ be the set of paths in $P$ of length $i$. We choose a lexicographic well-ordering $\ltp$ on $P$, via the following construction:
Let $<_0$ be the trivial ordering on $P_0$. Let $v_1 = w_1 = \vo$. For each $i \in \N$, choose a well-ordering $<_i$ on $P_i$ such that $(v_1, \dots, v_{i+1}) <_i (w_1, \dots, w_{i+1})$ if $(v_1, \dots, v_{j+1}) <_j (w_1, \dots, w_{j+1})$ for some $j < i$.
\begin{definition}\label{def:well-orderings}
For $p, q \in P$, define $p \ltp q$ if either $\abs{p} < \abs{q}$, or $\abs{p} = \abs{q}$ and $p <_{\abs{p}} q$.
\end{definition}
\begin{remark}
Let $p = (v_1, \dots, v_{i+1})$ and $q = (w_1, \dots, w_{i+1})$ be distinct elements of $P_i$. The ordering of $p$ and $q$ is determined by the choice of ordering of $(v_1, \dots, v_j)$ and $(w_1, \dots, w_j)$, where $j$ is minimal such that $v_j \neq w_j$. This in turn may be constructed from a choice of ordering of $(v_{j-1}, v_j)$ and $(w_{j-1}, w_j) = (v_{j-1}, w_j)$. We conclude the following: If for all vertices $w$ of $T$ we know a well-ordering of the edges incident to $w$, then we can determine $<_i$. This provides a method to implement the well-orderings.
\end{remark}
\begin{definition}\label{def:more-orderings} Let $\nset{P} = \Set{A \subseteq P \given \abs{A} = 2}$.
  \begin{itemize}
    \item
    Define a well-ordering $\prec$ on $\nset{P}$ such that $A \prec B$ if and only if
    \[
      \min\limits_{<}\left((A \cup B) \setminus (A \cap B)\right) \in A.
    \]
    
    \item
    Define a partial order $\prec$ on $G$ such that $g \prec h$ if and only if $\abs{g} = \abs{h}$ and $\{\half(g), \half(g^{-1})\} \prec \{\half(h), \half(h^{-1})\}$.
    
    \item
    Define a pre-order $\leqg$ on $G$ such that $g \leqg h$ if and only if $\{\half(g), \half(g^{-1})\} \preceq \{\half(h), \half(h^{-1})\}$.
  \end{itemize}
\end{definition}
Note that $g \ltg h$ if and only if either $\abs{g} < \abs{h}$ or $g \prec h$. Also, $g$ and $h$ are in the same connected component of $\leqg$ if and only if $\{g\vo, g^{-1}\vo\} = \{h\vo, h^{-1}\vo\}$. Hence the induced ordering on the connected components of $\leqg$ is isomorphic to the ordering $\prec$ on $\nset{P}$. Therefore $\leqg$ is a pre-well-ordering of $G$.
\begin{lemma}\label{lem:N3-violate-overlap}
  Suppose $X$ satisfies \textup{N2}. If there exists $x, y, z \in X^\pm$ such that $y \notin \{x^{-1}, z^{-1}\}$ and $\abs{xyz} \leq \abs{x} + \abs{z} - \abs{y}$, then $\delta(x^{-1}, y) = \delta(y^{-1}, z) = \abs{y}/2$.
\end{lemma}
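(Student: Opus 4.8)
The plan is to extract everything from Proposition \ref{prop:equivalent-axioms} and Lemma \ref{lem:no-overlap-conditions}. First I would invoke Proposition \ref{prop:equivalent-axioms}(1): since $X$ satisfies \textup{N2}, it satisfies \textup{N2}$'$. Because $y \notin \{x^{-1}, z^{-1}\}$, we may apply \textup{N2}$'$ to the pairs $(x, y)$ and $(y, z)$, obtaining $\delta(x^{-1}, y) \leq \abs{y}/2$ and $\delta(y^{-1}, z) \leq \abs{y}/2$. In particular $\Delta := \abs{y} - \delta(x^{-1}, y) - \delta(y^{-1}, z) \geq 0$.

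Next I would show $\Delta = 0$. The proof of Proposition \ref{prop:equivalent-axioms}(2) already establishes, for a single triple $x, y, z \in X^\pm$ with $x \neq y^{-1}$ and $y \neq z^{-1}$ and assuming only \textup{N2}, that $\abs{xyz} > \abs{x} + \abs{z} - \abs{y}$ if and only if $\Delta > 0$: by Lemma \ref{lem:no-overlap-conditions}, when $\Delta > 0$ the cancelled subpath $p$ of $[\vo, x\vo]$ and the cancelled subpath $q$ of $[xy\vo, xyz\vo]$ lie inside $[x\vo, xy\vo]$ separated by a subpath of length $\Delta$, and this separating subpath survives in the reduced path from $\vo$ to $xyz\vo$, so $\abs{xyz}$ strictly exceeds $\abs{x} + \abs{z} - \abs{y}$. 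Since we are given $\abs{xyz} \leq \abs{x} + \abs{z} - \abs{y}$, this forces $\Delta \leq 0$, hence $\Delta = 0$.

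Finally, $\Delta = 0$ gives $\delta(x^{-1}, y) + \delta(y^{-1}, z) = \abs{y}$; combining this with the two \textup{N2}$'$ bounds $\delta(x^{-1}, y) \leq \abs{y}/2$ and $\delta(y^{-1}, z) \leq \abs{y}/2$ forces each summand to equal $\abs{y}/2$, which is the claim. The calculation here is light because the geometry was done in Lemma \ref{lem:no-overlap-conditions}; the only point requiring care is the borderline case $\Delta = 0$, where a priori there might be extra cancellation between $[\vo, xy\vo]$ and $[xy\vo, xyz\vo]$ beyond the local amount $\delta(y^{-1}, z)$, so that $\abs{xyz}$ need not equal $\abs{x} + \abs{z} - \abs{y}$ exactly. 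But $\Delta = 0$ is precisely the conclusion we want, so the argument only ever uses the clean implication ``$\Delta > 0 \Rightarrow \abs{xyz} > \abs{x} + \abs{z} - \abs{y}$'', whose proof involves no such degeneracy, and I do not expect a real obstacle.
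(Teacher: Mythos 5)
Your proposal is correct and takes essentially the same route as the paper: the paper likewise uses the (per-triple) equivalence from Proposition~\ref{prop:equivalent-axioms}\,(2) to conclude that the hypothesis $\abs{xyz} \leq \abs{x} + \abs{z} - \abs{y}$ forces the triple to violate N3$'$, i.e.\ $\delta(x^{-1}, y) + \delta(y^{-1}, z) \geq \abs{y}$, and then combines this with the N2$'$ bounds $\delta(x^{-1}, y), \delta(y^{-1}, z) \leq \abs{y}/2$ to force both to equal $\abs{y}/2$. Your contrapositive phrasing (``$\Delta > 0 \Rightarrow$ N3 holds for the triple'') and your remark about the borderline case $\Delta = 0$ are just a more explicit rendering of the same argument.
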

\begin{proof}
  Since $X$ violates N3, it violates N3$'$; in particular, $\delta(x^{-1}, y) + \delta(y^{-1}, z) \geq \abs{y}$. In addition, by N2$'$, $\max\{\delta(x^{-1}, y), \delta(y^{-1}, z)\} \leq \abs{y}/2$.
  Therefore $\delta(x^{-1}, y) = \delta(y^{-1}, z) = \abs{y}/2$.
\end{proof}
Algorithm \ref{alg:reduce} takes as input $X$, and outputs a basis $X'$ of $G$ that is either \reduced or contains a non-trivial elliptic element; it thereby establishes Theorem \ref{thm:main}.
\begin{algorithm}[ht]
  \DontPrintSemicolon
  \KwData{Finite subset $X$ of $\Aut(T)$}
  \KwOut{$(\mathit{flag}, X')$ where $\gen{X'} = \gen{X} = G$, $\mathit{flag} = \tt{True}$ if $G$ is purely hyperbolic and $X'$ is a free basis of $G$, and $\mathit{flag} = \tt{False}$ if $X'$ contains an elliptic element}
  $X' \gets X$\;
  \Loop{
    \uIf{there exists non-trivial elliptic $x \in X'$} {
      \Return $(\texttt{False}, X')$\;
    }
    \uElseIf{there exists $x \in X' \cap X'^{-1}$}{
      $X' \gets X' \setminus \{x\}$\;
    } \uElseIf{there exists $x, y \in X'^\pm$ such that $x \neq y^{-1}$ and $xy \ltg x$}{
      $X' \gets X' \setminus \{x, x^{-1}\} \cup \{xy\}$\;
    } \Else{
      \Return $(\texttt{True}, X')$ \;
    }
  }
  \caption{Decide whether a finitely generated subgroup of $\Aut(T)$ is purely hyperbolic}
  \label{alg:reduce}
\end{algorithm}
\begin{figure}[t]
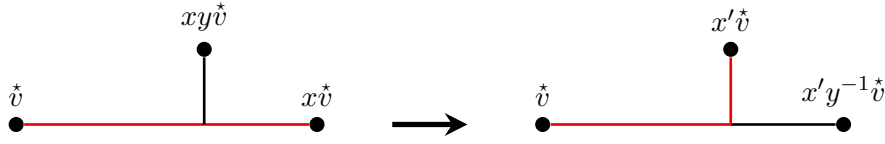

  \centering
  \tikzfig{tikzit/a2swap}
  \caption{A type 2 transformation $x \mapsto x'$}
  \label{fig:a2swap}
\end{figure}
\begin{proof}[Proof of correctness of Algorithm \emph{\ref{alg:reduce}}]
  Let $X_i$ be the value of $X'$ during the $i$-th iteration of the loop. If $X_i$ has a non-trivial elliptic element, then we terminate. Suppose $X_i$ has no non-trivial elliptic elements.
  \begin{enumerate}
    \item
    If $X_i$ violates N1, then there is some $x \in X_i \cap X_i^{-1}$. In this case, we assign $X_{i+1} = X_i \setminus \{x\}$.
    \item
    If $X_i$ satisfies N1 and violates N2, then there is some $x, y \in X_i^{\pm}$ with $x \neq y^{-1}$ such that $\abs{xy} < \abs{x}$ (as in Figure \ref{fig:a2swap}). Since $x$ is hyperbolic, by Proposition \ref{prop:translation-length-formula} $x \neq y$, so we replace $x$ by $xy$.
    \item
    Suppose $X_i$ satisfies N1 and N2, and violates N3. Let $x, y, z \in X_i^{\pm}$ be such that $y \notin \{x^{-1}, z^{-1}\}$ and $\abs{xyz} \leq \abs{x} + \abs{z} - \abs{y}$. Define
    \[
    p = [\vo, y\vo] \cap [\vo, x^{-1}\vo],
    \qquad q = [\vo, y^{-1}\vo] \cap [\vo, z\vo],
    \]
    so that $\abs{p} = \delta(x^{-1}, y)$ and $\abs{q} = \delta(y^{-1}, z)$, as in Figure \ref{fig:a3pq}. Note that the right diagram is isometric to the left diagram via multiplication by $y^{-1}$, and contains an isometric copy of Figure \ref{fig:half-path}. By Lemma \ref{lem:N3-violate-overlap} and N2$'$,
    \[
    \abs{p}=\abs{q} = \abs{y}/2 \leq \min\left\{\floor*{\frac{\abs{x}}{2}}, \floor*{\frac{\abs{z}}{2}}\right\}.
    \]
    From Figure \ref{fig:a3pq} we see that $\abs{xy} = \abs{x}$ and $\abs{yz} = \abs{z}$.
    Hence $xp = [\vo, x\vo] \cap [x\vo, xy\vo]$ and $xyq = [x\vo, xy\vo] \cap [xy\vo, xyz\vo]$ intersect at a unique point $w$.
    
    Suppose $y=x$. Then $\abs{p} = \abs{x}/2$. By Proposition \ref{prop:translation-length-formula}, $\abs{x^2} = 2\abs{x} - 2\abs{p} = \abs{x}$, hence $x$ is elliptic. Similarly, if $y=z$, then $z$ is elliptic. If $p = q$, then $x^{-1}w = (xy)^{-1}w$, so $y$ is elliptic. Thus we may assume that $y \notin \{x, z\}$ and $p \neq q$.
    
    By Lemma \ref{lem:same-initial-segment}, $\half(x) = \half(xy)$ and $\half(z^{-1}) = \half((yz)^{-1})$. Note also that $p$ is an initial segment of both $\half(x^{-1})$ and $\half(yz)$, and $q$ is an initial segment of both $\half((xy)^{-1})$ and $\half(z)$. If $q < p$, then $\half((xy)^{-1}) < \half(x^{-1})$ so $xy \prec x$, and we replace $x$ by $xy$, as in Figure \ref{fig:a3swap}. Similarly, if $p < q$, then $\half(yz) < \half(z)$ and $yz \prec z$, in which case we replace $z$ by $yz$.
  \end{enumerate}
  A transformation performed in case 1, 2 or 3 is denoted \emph{type} 1, 2 or 3 respectively. Case 1 reduces $\abs{X_i}$, so only finitely many transformations are type 1. Cases 2 and 3 reduce some element of $X_i$ with respect to $\ltg$, and does not increase $\abs{X_i}$. Therefore each generator may be replaced only finitely many times, so finitely many transformations can be types 2 and 3. By Proposition \ref{prop:reduced-implies-free}, we either find a non-trivial elliptic element of $G$, or a generating set $X'$ of $G$ that is \reduced and therefore a free basis for $G$.
\end{proof}
\begin{figure}[ht]
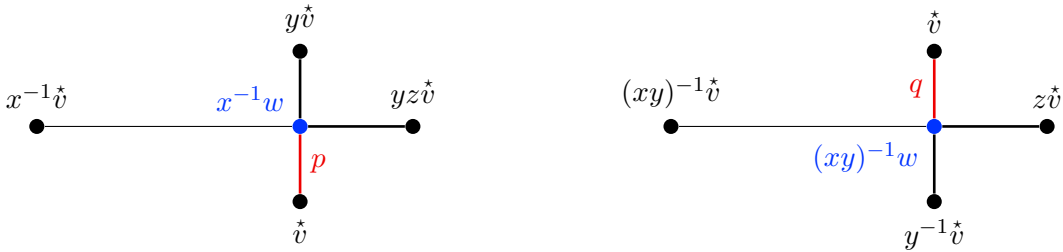

  \centering
  \begin{subfigure}{0.5\textwidth}
    \centering
    \tikzfig{tikzit/a3pq}
  \end{subfigure}%
  \begin{subfigure}{0.5\textwidth}
    \centering
    \tikzfig{tikzit/a3pq2}
  \end{subfigure}
  \caption{The paths $p$ and $q$}
  \label{fig:a3pq}
\end{figure}
\begin{figure}[ht]
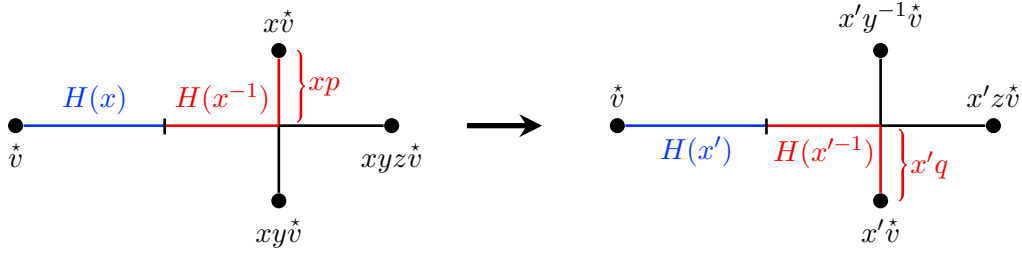

  \centering
  \tikzfig{tikzit/a3swap}
  \caption{A type 3 transformation $x \mapsto x'$}
  \label{fig:a3swap}
\end{figure}
To prove Theorem \ref{thm:mainalgo} we use a generalisation of Ihara's theorem \cite[Chapter II.1.5, Theorem 4]{serre-trees}. This proves \cite[Theorem 2.5]{conder} independently of \cite[Conjecture 2.3]{conder}.
\begin{proposition}\label{prop:ihara-generalisation}
  Let $G$ be a subgroup of $\Aut(T)$.
  \begin{enumerate}
  \item
  If $T$ is locally finite and $G$ is discrete and free, then $G$ is purely hyperbolic.
  \item
  If $G$ is purely hyperbolic, then $G$ is discrete and free.
  \end{enumerate}
\end{proposition}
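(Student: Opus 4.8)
The plan is to treat the two implications essentially independently, both resting on the elementary observation that a nontrivial elliptic element of $\Aut(T)$ is confined to the stabiliser of a vertex, possibly after squaring. Concretely, if $l(g)=0$ then $gw=w$ for some point $w\in T$; if $w$ lies in the interior of an edge $e$, then $g(e)$ is an edge sharing the interior point $w$ with $e$, forcing $g(e)=e$, and since $g$ restricts to an isometry of the segment $e$ fixing $w$ it either fixes both endpoints of $e$ or inverts $e$ — in which latter case $g^2$ fixes both endpoints. Hence some power $g^k$ with $k\in\{1,2\}$ fixes a vertex, and it is nontrivial unless $g$ has order $2$. I will also use two standard facts about the stabiliser $S_v:=\mathrm{Stab}_{\Aut(T)}(v)$ of a vertex $v$: it is open in the compact-open topology, since $S_v=\{f\in\Aut(T):f(v)\in U\}$ where $U$ is the open ball of radius $\tfrac12$ about $v$, which contains no other vertex; and, when $T$ is locally finite, $S_v$ is compact, being canonically the inverse limit of the finite automorphism groups of the successive finite balls about $v$.

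For (1) I would argue by contraposition. Suppose $G$ is not purely hyperbolic and fix a nontrivial elliptic $g\in G$. If $g$ has order $2$, then $G$ is not torsion-free. Otherwise, by the observation some power $g^k$ with $k\in\{1,2\}$ is a nontrivial element fixing a vertex $v$, so $\langle g^k\rangle\leq G\cap S_v$; since $T$ is locally finite $S_v$ is compact, and since $G$ is discrete $G\cap S_v$ is a discrete (hence closed, hence finite) subgroup of $S_v$. Thus $g^k$, and therefore $g$, has finite order. In all cases $G$ contains torsion, so $G$ is not free.

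For (2), assume $G$ is purely hyperbolic. Then $G$ contains no inversion (an inversion is elliptic) and no nontrivial element fixing a vertex (such an element would be elliptic), so $G$ acts freely and without inversions on $T$; by the classical structure theorem for groups acting freely on trees \cite[Chapter I.3.3, Theorem 4]{serre-trees}, $G$ is free. For discreteness, fix any vertex $v$: since $S_v$ is open and $G\cap S_v=\{1\}$, the trivial subgroup is open in $G$, i.e.\ $G$ is discrete.

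The argument is short, and I do not expect a real obstacle. The only points that need a little care are the handling of inversions in (1) — they are elliptic yet fix no vertex, so one must pass to the square — and the verification that $S_v$ is open, and in the locally finite case compact, with respect to the compact-open topology; everything else is immediate from the definitions and the cited freeness theorem.
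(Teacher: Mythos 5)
Your proof is correct, and it splits from the paper only in part (1). For (2) you and the paper do the same thing: freeness comes from Serre's theorem for free actions without inversions (pure hyperbolicity rules out both vertex stabilisers and inversions, since the paper counts inversions as elliptic), and discreteness comes from the openness of a vertex stabiliser $S_{\vo}$ in the compact-open topology together with $G \cap S_{\vo} = \{1\}$; the paper phrases this with a sequence converging to $1$, you phrase it as $\{1\}$ being open in $G$, which is if anything slightly cleaner. For (1) the paper does not argue at all: it simply cites Proposition 5.1 of Conder's 2020 paper. You instead give a self-contained proof: a nontrivial elliptic element (after squaring, to deal with inversions) fixes a vertex; local finiteness makes the vertex stabiliser compact (profinite), so a discrete subgroup of it is closed and hence finite; thus the elliptic element is torsion, contradicting freeness. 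This is the standard argument behind the cited result, and your care with inversions (an inversion fixes no vertex, but its square does, and the order-two case is torsion outright) and with the topological facts (openness of $S_v$, discrete subgroups of compact Hausdorff groups being finite) is exactly what is needed. What your route buys is independence from the external reference; what the paper's citation buys is brevity. No gaps.
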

\begin{proof}
  (1) is proved in \cite[Proposition 5.1]{conder-2020}. To prove (2): Freeness is given by the Bass-Serre Theorem \cite[Chapter I.3.3, Theorem 4]{serre-trees}. Let $(g_i)_{i \in \N}$ be a sequence of elements of $G$ converging to $1$. Since the stabiliser of a vertex is an open neighbourhood of $1$, all but finitely many $g_i$ are trivial, hence $G$ is discrete.
\end{proof}
\begin{remark}
  Here is a counterexample to the converse of Proposition \ref{prop:ihara-generalisation} (2), showing the need for local finiteness in (1): Suppose $T$ is regular and $\vo$ has neighbourhood set $\Set{v_i \given i \in \Z}$. Let $g \in \Aut(T)$ be such that $g\vo = \vo$ and $gv_i = v_{i+1}$ for all $i \in \Z$. Now $g$ is elliptic, but $\gen{g}$ is discrete and free.
\end{remark}
\begin{proof}[Proof of Theorem \ref{thm:mainalgo}]
  Suppose $T$ is locally finite. Then $G$ is discrete and free if and only if $G$ is purely hyperbolic, so Algorithm \ref{alg:reduce} decides whether $G$ is discrete and free.
\end{proof}

\section{Strong \reduction}\label{sec:strongly-reduce}
For the rest of the paper we fix a choice of well-orderings $(<_i)_{i \in \N}$, and therefore the orderings in Definitions \ref{def:well-orderings} and \ref{def:more-orderings}.
\begin{definition}\label{def:strongly-reduced}
  A free basis $X$ for $G$ is \emph{strongly \reduced} (with respect to $\vo$ and $\leqg$) if $X$ satisfies N1 and:
  \begin{enumerate}
    \item[N4.] For all $x, y \in X^{\pm}$, if $x \neq y^{-1}$, then $x \ltg xy$.
  \end{enumerate}
\end{definition}
We could equivalently write $y \ltg xy$, since $y \ltg xy$ if and only if $y^{-1} \ltg y^{-1}x^{-1}$.

Note that a free basis returned by Algorithm \ref{alg:reduce} is strongly \reduced. In particular, if $X$ is strongly \reduced with respect to $\vo$ and $\leqg$, then it is \reduced with respect to $\vo$, since Algorithm \ref{alg:reduce} finds an \reduced basis.

The following lemmas show that a word in a strongly \reduced set is bounded from below (with respect to $\leqg$) by its subwords.
\begin{lemma}\label{lem:word-size-bound}
  Suppose $X$ is strongly \reduced. Let $g = a_1 \dots a_n$ be a reduced word in $X$. For all $1 \leq i \leq j \leq n$, $\abs{g} \geq \abs{a_i \dots a_j}$.
\end{lemma}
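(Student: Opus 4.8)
The plan is to reduce the statement to the explicit length formula of Lemma~\ref{lem:no-overlap}(3). First I would observe that since $X$ is strongly \reduced it is in particular \reduced (as noted after Definition~\ref{def:strongly-reduced}), so it satisfies N1, N2, N3, and hence N2$'$ by Proposition~\ref{prop:equivalent-axioms}. Write $h = a_i \dots a_j$; this is again a reduced word in $X$, so Lemma~\ref{lem:no-overlap}(3) applies to both $g$ and $h$, giving
\[
  \abs{g} = \sum_{k=1}^{n}\abs{a_k} - 2\sum_{k=1}^{n-1}\delta(a_k^{-1}, a_{k+1}),
  \qquad
  \abs{h} = \sum_{k=i}^{j}\abs{a_k} - 2\sum_{k=i}^{j-1}\delta(a_k^{-1}, a_{k+1}).
\]

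Next I would subtract these. The length terms $\abs{a_k}$ for $i \le k \le j$ cancel, and the cancellation terms $\delta(a_k^{-1}, a_{k+1})$ for $i \le k \le j-1$ cancel, leaving
\[
  \abs{g} - \abs{h}
  = \Bigl(\sum_{k=1}^{i-1}\abs{a_k} - 2\sum_{k=1}^{i-1}\delta(a_k^{-1}, a_{k+1})\Bigr)
  + \Bigl(\sum_{k=j+1}^{n}\abs{a_k} - 2\sum_{k=j}^{n-1}\delta(a_k^{-1}, a_{k+1})\Bigr).
\]
Each bracket contains equally many length terms and cancellation terms, and I would show each is nonnegative by a termwise application of N2$'$ (valid since consecutive letters of the reduced word $g$ are never mutually inverse). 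In the first bracket I pair $\delta(a_k^{-1}, a_{k+1})$ with $\abs{a_k}$ and use $2\delta(a_k^{-1}, a_{k+1}) \le \abs{a_k}$; in the second I pair $\delta(a_k^{-1}, a_{k+1})$ with $\abs{a_{k+1}}$ and use $2\delta(a_k^{-1}, a_{k+1}) \le \abs{a_{k+1}}$. Both brackets are then $\ge 0$, so $\abs{g} \ge \abs{h}$.

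The argument is essentially bookkeeping and I do not anticipate a genuine obstacle; the one point needing attention is that the two halves of the minimum in N2$'$ must be used differently for the ``prefix'' cancellations ($k < i$) and the ``suffix'' cancellations ($k \ge j$), so that each $\delta$ term is absorbed by a distinct generator length rather than double-counting. An alternative route is an induction peeling a single letter from one end at a time: it suffices to prove $\abs{a_1 \cdots a_n} \ge \abs{a_2 \cdots a_n}$ for $n \ge 2$ and then apply the same statement to $g^{-1}$; for the one-letter step one writes $\abs{a_1 h'} = \abs{a_1} + \abs{h'} - 2\delta(a_1^{-1}, h')$ with $h' = a_2 \cdots a_n$, uses Lemma~\ref{lem:no-overlap}(2) applied to $g^{-1}$ to identify $\delta(a_1^{-1}, h') = \delta(a_1^{-1}, a_2)$, and concludes with N2$'$. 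I would favour the direct computation above, which avoids passing to $g^{-1}$. Note that condition N4 plays no role in this lemma; it is needed only for the finer comparison of words with respect to $\leqg$.
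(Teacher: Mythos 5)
Your main argument is correct. The bookkeeping checks out: applying Lemma~\ref{lem:no-overlap}(3) to both $g$ and the subword $h = a_i\dots a_j$ (legitimate, since a subword of a reduced word is reduced and strong \reduction implies \reduction, as you note), the difference $\abs{g}-\abs{h}$ splits into a prefix bracket and a suffix bracket, each with equally many $\abs{a_k}$ and $\delta$ terms, and N2$'$ absorbs each $\delta(a_k^{-1},a_{k+1})$ into $\abs{a_k}$ on the prefix side and into $\abs{a_{k+1}}$ on the suffix side; the hypothesis $a_k \neq a_{k+1}^{-1}$ of N2$'$ holds because $g$ is reduced. The paper takes a slightly different route: it reduces to the single case $i=1$, $j=n-1$, writes $\abs{g} = \abs{h} + \abs{a_n} - 2\delta(h^{-1},a_n)$, identifies $\delta(h^{-1},a_n)=\delta(a_{n-1}^{-1},a_n)$ via Lemma~\ref{lem:no-overlap}(2), bounds this by N2$'$, and then gets the symmetric case by applying the result to $g^{-1}$ and general subwords by transitivity --- this is essentially the ``alternative route'' you sketch at the end. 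The two proofs use the same underlying facts; yours trades the inversion-plus-transitivity reduction for a direct global comparison via the length formula, which is marginally longer to write down but arguably more transparent, while the paper's one-letter peeling is shorter and reuses the local-to-global statement (2) that it needs again in later lemmas. Your closing remark is also accurate: N4 enters only through the implication that a strongly \reduced set is \reduced; the estimate itself needs only N2$'$.
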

\begin{proof}
  The case $n=1$ is trivial. We may assume $i = 1$ and $j = n-1$: Applying this case to $g^{-1} = a_n^{-1}\dots a_1^{-1}$ proves the case $i=2$ and $j=n$, from which the rest follows by transitivity.
  
  Define $h = a_1\dots a_{n-1}$. By Lemma \ref{lem:no-overlap} (2) and N2$'$,
  \[
    \abs{g} = \abs{h} + \abs{a_n} - 2\delta(h^{-1}, a_n) = \abs{h} + \abs{a_n} - 2\delta(a_{n-1}^{-1}, a_n) \geq \abs{h}. \qedhere
  \]
\end{proof}
\begin{lemma}\label{lem:half-subset}
  Suppose $X$ is strongly \reduced. Let $g = a_1 \dots a_n$ be a reduced word in $X$. For all $1 \leq i < n$, $\half(a_1 \dots a_i) \subseteq \half(g)$.
\end{lemma}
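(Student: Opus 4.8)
The plan is to reduce the statement to the case of deleting a single final letter, and then read off the conclusion from the cancellation structure of reduced words in an \reduced set. For the reduction, observe that every prefix $a_1\dots a_m$ with $m\le n$ is again a reduced word in $X$, so it suffices to prove the following for every reduced word: deleting the final letter does not enlarge the initial half. Indeed, granting this we obtain the chain $\half(a_1\dots a_i)\subseteq\half(a_1\dots a_{i+1})\subseteq\cdots\subseteq\half(a_1\dots a_n)=\half(g)$. So from now on set $h=a_1\dots a_{n-1}$, so that $g=ha_n$ with $n\ge 2$, and the goal is $\half(h)\subseteq\half(g)$.

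First I would bound the length $\delta(h^{-1},a_n)$ of the subpath of $[\vo,h\vo]$ that is cancelled in $g=ha_n$. By Lemma~\ref{lem:no-overlap}(2), $\delta(h^{-1},a_n)=\delta(a_{n-1}^{-1},a_n)$; by N2$'$ (Proposition~\ref{prop:equivalent-axioms}) this is at most $\abs{a_{n-1}}/2$; and since $a_{n-1}$ is a subword of the reduced word $h$ and $X$ is strongly \reduced, Lemma~\ref{lem:word-size-bound} gives $\abs{a_{n-1}}\le\abs{h}$. Hence
\[
  \delta(h^{-1},a_n)\ \le\ \tfrac12\abs{h}\ \le\ \abs{h}-\floor*{\tfrac12\abs{h}}.
\]
In particular $\abs{g}=\abs{h}+\abs{a_n}-2\delta(h^{-1},a_n)\ge\abs{h}$, so $\floor*{\tfrac12\abs{h}}\le\floor*{\tfrac12\abs{g}}$.

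Now I would conclude as follows. Write $[\vo,h\vo]\cap[h\vo,g\vo]=[w,h\vo]$; this is a terminal segment of $[\vo,h\vo]$ of length $\delta(h^{-1},a_n)$ by Lemma~\ref{lem:no-overlap}(2), and the complementary initial segment $[\vo,w]$, of length $\abs{h}-\delta(h^{-1},a_n)\ge\floor*{\tfrac12\abs{h}}$, is an initial segment of $[\vo,g\vo]$ (cf.\ Figure~\ref{fig:product-cancel}). Therefore $\half(h)$ — the initial segment of $[\vo,h\vo]$ of length $\floor*{\tfrac12\abs{h}}$ — lies inside $[\vo,w]$ and coincides with the initial segment of $[\vo,g\vo]$ of the same length. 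Since $\floor*{\tfrac12\abs{h}}\le\floor*{\tfrac12\abs{g}}$ and $\half(g)$ is the initial segment of $[\vo,g\vo]$ of length $\floor*{\tfrac12\abs{g}}$, it follows that $\half(h)\subseteq\half(g)$.

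The argument is essentially bookkeeping; the only inequality with real content is $\delta(h^{-1},a_n)\le\tfrac12\abs{h}$, i.e.\ that the cancellation of $[\vo,h\vo]$ inside $g$ does not pass its midpoint. This uses both N2$'$ (at least half of each generator's path survives) and the monotonicity of $\abs{\cdot}$ along prefixes of a reduced word (Lemma~\ref{lem:word-size-bound}), and it is precisely here that strong \reduction, rather than mere \reduction, enters: \reduction alone only bounds $\delta(h^{-1},a_n)$ by $\abs{a_{n-1}}/2$, which need not be at most $\tfrac12\abs{h}$.
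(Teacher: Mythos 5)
Your proof is correct and essentially identical to the paper's: both reduce to $i=n-1$ by transitivity and then combine Lemma \ref{lem:no-overlap}(2), N2$'$, and Lemma \ref{lem:word-size-bound} (giving $\abs{a_{n-1}}\leq\abs{h}$) to show that the uncancelled initial segment of $[\vo,h\vo]$ inside $[\vo,g\vo]$ has length at least $\abs{h}/2$, whence $\half(h)\subseteq\half(g)$ because $\floor{\abs{h}/2}\leq\floor{\abs{g}/2}$. The only cosmetic slip is that $\abs{g}\geq\abs{h}$ does not follow from the bound $\delta(h^{-1},a_n)\leq\abs{h}/2$ you had just derived, but rather from $\delta(a_{n-1}^{-1},a_n)\leq\abs{a_n}/2$ (the other half of N2$'$) or directly from Lemma \ref{lem:word-size-bound}; the needed inequality is of course true and immediate from your cited tools.
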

\begin{proof}
  By transitivity, we assume $i = n-1$.
  Let $h = a_1 \dots a_{n-1}$. By Lemma \ref{lem:no-overlap} (2),
  \[
    \abs{[\vo, h \vo] \cap [\vo, g\vo]} = \abs{h} - \delta(h^{-1}, a_{n})
    = \abs{h} - \delta(a_{n-1}^{-1}, a_{n}).
  \]
  By N2$'$ and Lemma \ref{lem:word-size-bound},
  \[
    \abs{h} - \delta(a_{n-1}^{-1}, a_{n}) \geq \abs{h} - \frac{\abs{a_{n-1}}}{2}
    \geq \frac{\abs{h}}{2}.
  \]
  Thus $\half(h) \subseteq [\vo, h \vo] \cap [\vo, g\vo] \subseteq [\vo, g\vo]$. Since $\abs{\half(h)} \leq \abs{\half(g)}$, we conclude that $\half(h) \subseteq \half(g)$. 
\end{proof}
\begin{lemma}\label{lem:word-ordering-increases}
  Suppose $X$ is strongly \reduced. Let $g = a_1 \dots a_n$ be a reduced word in $X$. Let $h = a_1 \dots a_i$ for some $1 \leq i < n$. Suppose $\abs{h} = \abs{g} = s$.
  \begin{enumerate}
    \item For all $i \leq j \leq n$, $\abs{a_1 \dots a_{j}} = s$.
    \item $\abs{a_n} \leq \abs{a_{n-1}}$, with strict inequality when $i < n-1$.
    \item For all $i < j \leq n$, $h \prec a_1 \dots a_j \preceq g$.
  \end{enumerate}
\end{lemma}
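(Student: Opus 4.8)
The plan is to handle the three parts in order, writing $g_j = a_1\cdots a_j$ throughout (so $h = g_i$ and $g = g_n$) and using freely that a strongly \reduced set is \reduced, hence satisfies N2$'$ and N3$'$ and generates a purely hyperbolic group. Part (1) is a squeeze: $g_j$ is itself a reduced word, so Lemma \ref{lem:word-size-bound} applied to $g_j$ gives $\abs{g_j} \geq \abs{a_1\cdots a_i} = \abs{h} = s$, while Lemma \ref{lem:word-size-bound} applied to $g$ gives $\abs{g_j} = \abs{a_1\cdots a_j} \leq \abs{g} = s$. For part (2), fix $i < j \leq n$; by (1) $\abs{g_{j-1}} = \abs{g_j} = s$, so Definition \ref{def:delta} and Lemma \ref{lem:no-overlap}(2) force $\delta(a_{j-1}^{-1},a_j) = \abs{a_j}/2$, whence $\abs{a_j}/2 \leq \abs{a_{j-1}}/2$ by N2$'$; when $i < n-1$ the same identity with $j = n-1$ together with an application of N3$'$ to $(a_{n-2},a_{n-1},a_n)$ upgrade this to $\abs{a_n} < \abs{a_{n-1}}$. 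Finally, since $\delta$ is integer-valued, $\abs{a_j}$ is even for each $i < j \leq n$, so such an $a_j$, being hyperbolic, has translation length $\geq 2$ and therefore $\half(a_j) \neq \half(a_j^{-1})$.

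For part (3), since $\prec$ is a partial order (hence transitive) it suffices to prove $g_{j-1}\prec g_j$ for each $i < j \leq n$ and chain the inequalities: this gives $h = g_i \prec \cdots \prec g_j \preceq \cdots \prec g_n = g$. Fix $j$. Applying Lemma \ref{lem:half-subset} to $g$ and using part (1) shows $\half(g_i),\dots,\half(g_n)$ is a nested chain of paths of the common length $\floor*{s/2}$, hence they all coincide; write $\alpha$ for this common path. Thus $g_{j-1}\prec g_j$ comes down to comparing $\half(g_{j-1}^{-1})$ with $\half(g_j^{-1})$. By Lemma \ref{lem:half-subset} applied to the reduced word $g_j^{-1} = a_j^{-1}a_{j-1}^{-1}\cdots a_1^{-1}$, the path $\half(g_j^{-1})$ begins with $\half(a_j^{-1})$, a segment of length $\abs{a_j}/2$; and since $\delta(a_{j-1}^{-1},a_j) = \abs{a_j}/2$, Proposition \ref{prop:delta-is-intersection} identifies $\half(a_j)$ with the first $\abs{a_j}/2$ edges of $[\vo,a_{j-1}^{-1}\vo]$, and hence (Lemma \ref{lem:half-subset} applied to $g_{j-1}^{-1} = a_{j-1}^{-1}\cdots a_1^{-1}$) with the first $\abs{a_j}/2$ edges of $\half(g_{j-1}^{-1})$. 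Since $\half(a_j) \neq \half(a_j^{-1})$, the two length-$\floor*{s/2}$ paths $\half(g_{j-1}^{-1})$ and $\half(g_j^{-1})$ already diverge within those initial edges, at the divergence point of $\half(a_j)$ and $\half(a_j^{-1})$; by the lexicographic compatibility built into the family $(<_i)$ this means $\half(g_{j-1}^{-1}) < \half(g_j^{-1})$ if and only if $\half(a_j) < \half(a_j^{-1})$.

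It remains to see $\half(a_j) < \half(a_j^{-1})$, and this is exactly where N4 enters. Since $\delta(a_{j-1}^{-1},a_j) = \abs{a_j}/2$, we have $\abs{a_{j-1}a_j} = \abs{a_{j-1}}$, so Lemma \ref{lem:same-initial-segment} gives $\half(a_{j-1}) = \half(a_{j-1}a_j)$ while N4 gives $a_{j-1}\ltg a_{j-1}a_j$, hence $a_{j-1}\prec a_{j-1}a_j$. Running the comparison of the previous paragraph for the two-letter word $a_{j-1}a_j$ in place of $g_{j-1}$ and $g_j = g_{j-1}a_j$ shows that $a_{j-1}\prec a_{j-1}a_j$ is equivalent to $\half(a_j) < \half(a_j^{-1})$, which therefore holds. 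Combining $\half(g_{j-1}^{-1}) < \half(g_j^{-1})$ with $\half(g_{j-1}) = \half(g_j) = \alpha$ yields $\{\half(g_{j-1}),\half(g_{j-1}^{-1})\} \prec \{\half(g_j),\half(g_j^{-1})\}$, i.e.\ $g_{j-1}\prec g_j$.

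Parts (1) and (2) are routine; the crux is the bookkeeping in part (3) --- pinning down exactly which initial segments $\half(g_{j-1}^{-1})$ and $\half(g_j^{-1})$ begin with, and verifying that the comparison forced at the short length $\abs{a_j}/2$ really does propagate up to length $\floor*{s/2}$. One also has to deal separately with the degenerate situations in which $\half(w) = \half(w^{-1})$ for one of the relevant elements, which forces the previous letter $a_{j-1}$ --- necessarily $a_i$, since $\abs{a_k}$ is even for $i < k \leq n$ --- to have translation length $1$, so that a pair $\{\half(w),\half(w^{-1})\}$ collapses to a singleton; these are dispatched by a routine variant of the argument once $\prec$ is read as the evident total order on subsets of $P$ of size at most $2$.
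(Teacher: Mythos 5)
Your proof is correct and follows essentially the same route as the paper: parts (1)--(2) are the paper's argument, and for (3) you use the same ingredients (reduction to consecutive prefixes, Lemma \ref{lem:half-subset}, Lemma \ref{lem:same-initial-segment}, N4 applied to the pair $a_{j-1}$, $a_{j-1}a_j$, and the prefix-compatibility of the lexicographic well-ordering), merely rerouting the comparison through $\half(a_j)$ versus $\half(a_j^{-1})$ rather than comparing $\half(a_{j-1}^{-1})$ with $\half((a_{j-1}a_j)^{-1})$ directly as the paper does. The degenerate possibility $\half(w)=\half(w^{-1})$ that you flag is likewise left implicit by the paper's definition of $\prec$, so it is a shared informality rather than a defect of your argument.
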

\begin{proof}
  (1) follows directly from Lemma \ref{lem:word-size-bound}. We now prove (2). By Lemma \ref{lem:no-overlap} (3),
  \[
    0 = \abs{a_1 \dots a_{n}} - \abs{a_1 \dots a_{n-1}} = \abs{a_n} - 2\delta(a_{n-1}^{-1}, a_n),
  \]
  so $\delta(a_{n-1}^{-1}, a_n) = \abs{a_n}/2$. By N2$'$, $\abs{a_n} \leq \abs{a_{n-1}}$.
  If $i < n-1$ then, by similar reasoning, \[
    \delta(a_{n-2}^{-1}, a_{n-1}) = \abs{a_{n-1}}/2 \geq \delta(a_{n-1}^{-1}, a_n).
  \] By N3$'$,
  \[
    \abs{a_n} = 2\delta(a_{n-1}^{-1}, a_{n})\leq \delta(a_{n-2}^{-1}, a_{n-1}) + \delta(a_{n-1}^{-1}, a_{n}) < \abs{a_{n-1}}.
  \]
  Lastly we prove (3). By transitivity, we may assume $i = n-1$ and $j=n$. Let $w \in V(T)$ be such that $[\vo, w] = [\vo, h\vo] \cap [\vo, g\vo]$, as in Figure \ref{fig:ordering-increases}. By Lemma \ref{lem:no-overlap} (2) and N2$'$, 
  \[
    d(\vo, w) = \abs{h} - \delta(h^{-1}, a_n) = \abs{h} - \delta(a_{n-1}^{-1}, a_n) \geq \frac{\abs{h}}{2},
  \]
  so $\half(h) = \half(g)$. We now show that $\half(h^{-1}) < \half(g^{-1})$. By Lemma \ref{lem:half-subset}, $\half(a_{n-1}^{-1}) \subseteq \half(h^{-1})$ and $\half(a_n^{-1}) \subseteq \half(g^{-1})$. Additionally, by Definition \ref{def:delta},
  \[
    \abs{a_n a_{n-1}} = \abs{a_n} + \abs{a_{n-1}} - 2\delta(a_{n-1}^{-1}, a_n) = \abs{a_{n-1}}.
  \]
  By Lemma \ref{lem:same-initial-segment}, $\half(a_{n-1}) = \half(a_{n-1}a_n)$. By N4, $a_{n-1} \prec a_{n-1}a_n$. Thus $\half(a_{n-1}^{-1}) \ltp \half((a_{n-1}a_n)^{-1})$. By Lemma \ref{lem:half-subset}, $\half((a_{n-1}a_n)^{-1}) \subseteq \half(g^{-1})$, hence $\half(h^{-1}) < \half(g^{-1})$. Therefore $h \prec g$.
\end{proof}
\begin{figure}[ht]
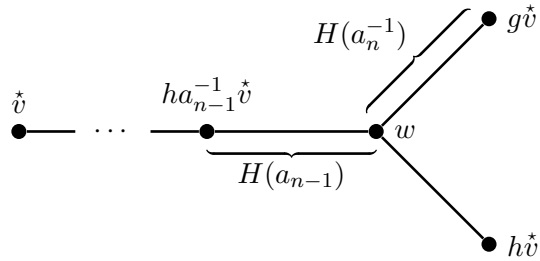

  \centering
  \tikzfig{tikzit/ordering-increases}
  \caption{The construction for Lemma \ref{lem:word-ordering-increases}}\label{fig:ordering-increases}
\end{figure}
\begin{theorem}\label{thm:reduced-is-unique}
  Let $X$ and $Y$ be finite generating sets of a purely hyperbolic subgroup of $\Aut(T)$. If $X$ and $Y$ are strongly \reduced (with respect to $\vo$ and $\ltg$), then $X^\pm = Y^\pm$.
\end{theorem}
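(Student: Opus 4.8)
The plan is to argue by contradiction, using that $\leqg$ is a pre-well-ordering on $G$. Since $X^\pm = Y^\pm$ is equivalent to $X^\pm \triangle Y^\pm = \varnothing$ (with $\triangle$ the symmetric difference), suppose this set is non-empty and choose $x \in X^\pm \triangle Y^\pm$ such that no $h \in X^\pm \triangle Y^\pm$ satisfies $h \ltg x$. Interchanging $X$ and $Y$ if necessary — the hypotheses are symmetric in the two sets — I may assume $x \in X^\pm \setminus Y^\pm$; write $x = b_1 \cdots b_m$ as a reduced word in $Y$. Since $x \notin Y^\pm$ and $x \neq 1$, we have $m \geq 2$.

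The engine of the argument is a sharpening of Lemma \ref{lem:word-size-bound}: if $W$ is strongly \reduced and $g$ is a reduced word in $W$, then every proper non-empty prefix or suffix $h$ of $g$ satisfies $h \ltg g$. This is quick: $\abs{h} \leq \abs{g}$ by Lemma \ref{lem:word-size-bound}, and if $\abs{h} = \abs{g}$ then Lemma \ref{lem:word-ordering-increases}(3) — applied to $g$ when $h$ is a prefix and to $g^{-1}$ when $h$ is a suffix, using that $\prec$ and $\ltg$ are invariant under $g \mapsto g^{-1}$ — yields $h \prec g$, hence $h \ltg g$.

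With this in hand I would prove that $b_i \ltg x$ for every $i$. Fix $i$ and set $s_i = b_i \cdots b_m$, a reduced word in $Y$ with $s_1 = x$ and $s_m = b_m$. If $i \leq m-1$ then $s_i$ has length at least $2$, so its first letter gives $b_i \ltg s_i$; if $i \geq 2$ then $s_i$ is a proper suffix of $x$, so $s_i \ltg x$. As $m \geq 2$, these cases together give $b_i \ltg x$ for all $i$. Since $b_i \in Y^\pm$, the choice of $x$ forces $b_i \notin X^\pm \triangle Y^\pm$, hence $b_i \in X^\pm$. Thus every letter of $b_1 \cdots b_m$ lies in $X^\pm$; since $Y$-reducedness gives $b_j b_{j+1} \neq 1$ and $X$ is a free basis, no cancellation occurs over $X$, so $b_1 \cdots b_m$ is a reduced word in $X$ of length $m \geq 2$ representing $x$. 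But $x \in X^\pm$ has an $X$-reduced word of length $1$, and reduced words in the free group on $X$ are unique — a contradiction. Hence $X^\pm = Y^\pm$.

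I expect the one genuinely delicate point to be the case $\abs{b_i} = \abs{x}$: there $b_i \ltg x$ is invisible to translation distances, and is extracted from the ordering on initial halves via Lemma \ref{lem:word-ordering-increases} — precisely the situation the ``tie-breaking'' refinement $\leqg$ was designed for. The only other thing that needs care is keeping the construction symmetric in $X$ and $Y$, so that ``$b_i \in Y^\pm$ and $b_i \ltg x$'' can legitimately be upgraded to ``$b_i \in X^\pm$''.
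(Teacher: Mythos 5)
Your proof is correct, but it takes a genuinely different route from the paper. The paper argues by induction on $n = \max\Set{\abs{g} \given g \in X \cup Y}$: it splits $X$ and $Y$ into the elements of norm $<n$ and $=n$, shows $\gen{X_{<n}} = \gen{Y_{<n}}$ and applies the inductive hypothesis there, and then analyses the reduced $Y$-word of an $x \in X_n$ directly — using Lemma \ref{lem:word-ordering-increases}\,(2) and (3) to force every letter into $Y_n^\pm$ and to bound the word length by $2$, and finally invoking N4 explicitly to kill the length-$2$ case. You instead pick a $\ltg$-minimal element $x$ of $X^\pm \,\triangle\, Y^\pm$ (legitimate, since $\leqg$ is a pre-well-ordering and the set is finite), prove the clean intermediate statement that every proper non-empty prefix or suffix of a reduced word in a strongly \reduced set is $\ltg$-smaller than the word — which is exactly what Lemma \ref{lem:word-size-bound} plus Lemma \ref{lem:word-ordering-increases}\,(3) and inverse-invariance of $\prec$ give — conclude $b_i \ltg x$ for every letter, upgrade $b_i \in Y^\pm$ to $b_i \in X^\pm$ by minimality, and contradict uniqueness of reduced words in the free group on $X$. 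Both arguments ultimately rest on the same two lemmas (and hence on N2$'$, N3$'$ and N4, the latter entering through the proof of Lemma \ref{lem:word-ordering-increases}), but your minimal-counterexample formulation avoids the partition by norm, the bound $m \leq 2$, and the separate $m=2$ case, so it is arguably shorter and more transparent; the paper's induction yields a little extra structural information along the way (such as $\gen{X_{<n}} = \gen{Y_{<n}}$ with $X_{<n}^\pm = Y_{<n}^\pm$), but that is not needed for the statement itself. The points you flag as delicate — the tie-break via the ordering on initial halves, and keeping the symmetric-difference argument symmetric in $X$ and $Y$ — are indeed the right ones, and you handle them correctly.
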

\begin{proof}
  We prove this by induction on $n = \max\Set{\abs{g} \given g \in X \cup Y}$. Define \[
    X_{<n} = \Set{g \in X \given \abs{g} < n},\qquad X_{n} = \Set{g \in X \given \abs{g} = n},
  \]
  and define $Y_{<n}$ and $Y_n$ similarly. By Lemma \ref{lem:word-size-bound}, given some $y \in Y_{<n}$, the reduced word for $y$ in $X$ cannot contain an element of $X_n^\pm$, and hence is a word in $X_{<n}$. Similarly, every $x \in X_{<n}$ can be written as a reduced word in $Y_{<n}$. It follows that $\gen{X_{<n}} = \gen{Y_{<n}}$. Note also that $X_{<n}$ and $Y_{<n}$ are strongly \reduced. If $n = 1$, then $X_{<n} = Y_{<n} = \varnothing$. Otherwise, by the induction hypothesis $X_{<n}^\pm = Y_{<n}^\pm$.
  
  Let $x \in X_n$ have reduced word $y_1 \dots y_m$ in $Y$. Since $x$ is not represented by a word in $Y_{< n}$, there must be some $y_j \in Y^{\pm}_n$. Suppose for a contradiction that $y_k \in X_{< n}^\pm$ for some $k > j$. By Lemma \ref{lem:word-size-bound}, $\abs{y_j \dots y_k \dots y_m} = n$, hence it follows from Lemma \ref{lem:word-ordering-increases} (2) that $y_m \in X_{<n}^\pm$. But Lemma \ref{lem:word-ordering-increases} (3) implies that $x y_m^{-1} \prec x$, contradicting N4. Applying the same argument to $x^{-1}$, we find that $y_k \in Y_n$ for all $1 \leq k \leq m$. By applying Lemma \ref{lem:word-ordering-increases} (2) again to $x$, we conclude that $m \leq 2$.
  
  Suppose $m=2$. Write $x = y y'$, where $y, y' \in Y_{n}^\pm$. Since the words of $y$ and $y'$ in $X_n$ have length at most 2, and $y y'$ reduces to an element of $X_n^\pm$, we may assume (up to inverting $x$) that $y = x'$ and $y' = x'^{-1} x$ for some $x' \in X_n^\pm$. By N4, $y y' = x \prec y'$, which is a contradiction. Therefore $m = 1$ and $x \in Y^\pm$. By symmetry, $X^\pm = Y^\pm$.
\end{proof}
As a consequence, we can decide equality of purely hyperbolic subgroups of $\Aut(T)$.
\begin{corollary}\label{cor:equal-subgroup}
  Let $X$ and $Y$ be finite subsets of $\Aut(T)$ generating purely hyperbolic subgroups $G$ and $H$ respectively. There exists an algorithm to decide whether $G = H$.
\end{corollary}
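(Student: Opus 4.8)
The plan is to reduce the problem directly to the uniqueness statement of Theorem \ref{thm:reduced-is-unique}. First I would run Algorithm \ref{alg:reduce} on the input $X$, and separately on $Y$. Every set maintained by the algorithm consists of elements of $G$ (respectively $H$), and since $G$ and $H$ are purely hyperbolic, these elements are all trivial or hyperbolic; hence the branch returning $(\texttt{False}, X')$ is never executed. By the proof of correctness of Algorithm \ref{alg:reduce} the algorithm halts, so it must return $(\texttt{True}, X')$ with $X'$ an \reduced free basis of $G$. By the observation following Definition \ref{def:strongly-reduced}, $X'$ is in fact strongly \reduced with respect to $\vo$ and $\leqg$ — here it matters that in Section \ref{sec:strongly-reduce} the well-orderings $(<_i)_{i\in\N}$, and therefore $\leqg$, have been fixed once and for all, so that the two runs of the algorithm use the same ordering. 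In the same way, running the algorithm on $Y$ produces a strongly \reduced free basis $Y'$ of $H$.

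Next I would compare $X'$ and $Y'$. We have $\gen{X'} = \gen{X} = G$ and $\gen{Y'} = \gen{Y} = H$. If $G = H$, then $X'$ and $Y'$ are two strongly \reduced generating sets of the same purely hyperbolic subgroup, so Theorem \ref{thm:reduced-is-unique} gives $X'^\pm = Y'^\pm$. Conversely, if $X'^\pm = Y'^\pm$ then $G = \gen{X'^\pm} = \gen{Y'^\pm} = H$. Thus $G = H$ if and only if $X'^\pm = Y'^\pm$, and the latter is a comparison of two finite subsets of $\Aut(T)$. Deciding whether two automorphisms of $T$ coincide is already needed to execute Algorithm \ref{alg:reduce} (for instance to evaluate $\abs{g} = d(\vo, g\vo)$ or to test whether an element is elliptic), so this comparison is effective, and the algorithm deciding $G = H$ is complete.

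I do not expect a genuine obstacle here: the substantive content is Theorem \ref{thm:reduced-is-unique}, and the only point that needs care is that Algorithm \ref{alg:reduce}, run on generators of a purely hyperbolic group, terminates with a strongly \reduced basis rather than with an elliptic witness — which is exactly where pure hyperbolicity of $G$ and $H$ is used and is immediate from the fact noted above. Everything else is bookkeeping.
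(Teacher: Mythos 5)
Your proposal is correct and follows essentially the same route as the paper: run Algorithm \ref{alg:reduce} on $X$ and on $Y$ to obtain strongly \reduced free bases $X'$ and $Y'$, then invoke Theorem \ref{thm:reduced-is-unique} to conclude that $G = H$ if and only if $X'^\pm = Y'^\pm$. The extra details you supply (pure hyperbolicity ruling out the elliptic branch, the fixed choice of orderings) are correct and simply make explicit what the paper leaves implicit.
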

\begin{proof}
  We use Algorithm \ref{alg:reduce} to find strongly \reduced free bases $X'$ and $Y'$ for $G$ and $H$ respectively. By Theorem \ref{thm:reduced-is-unique}, $G = H$ if and only if $X'^\pm = Y'^\pm$.
\end{proof}

\section{The constructive membership problem}\label{sec:constructive-membership}
In this section we present a solution to the constructive membership problem for finitely generated purely hyperbolic subgroups of $\Aut(T)$, by taking advantage of the properties of strongly \reduced bases.

Recall that we have fixed a tree $T$, a vertex $\vo$, a lexicographic well-ordering $\ltp$ on the set of paths on $T$ starting from $\vo$ as in Definition \ref{def:well-orderings}, and a finite subset $X$ of $\Aut(T)$ generating a group $G$.
\begin{definition}
  Let $g\in G$ be hyperbolic. The \emph{translation axis} of $g$ is $\Ax(g) = \Min(g)$.
\end{definition}
Since $g$ acts on $\Ax(g)$ by translation, there is a natural direction associated with the action of $g$ on $\Ax(g)$ such that $g$ and $g^{-1}$ translate in opposite directions. Let $x$ and $y$ be distinct elements of $\Ax(g)$. If $gy$ and $x$ lie in the same connected component of $\Ax(g) \setminus \{y\}$, then $g$ translates $y$ \emph{towards} $x$. Otherwise, $g$ translates $y$ \emph{away from} $x$.

The Ping-Pong Lemma is a well-known tool for proving the freeness of a group. We use the following incarnation \cite[Lemma 3.1]{conder}:
\begin{lemma}[The Ping Pong Lemma]
  Let $X$ be a finite set of hyperbolic automorphisms of $T$. Suppose that for each $g \in X$ there is an open segment $P_g \subseteq \Ax(g)$ of length $l(g)$ such that
  \[
    \bigcup_{\mathclap{h \in X \setminus \{g\}}}\; \proj_{\Ax(g)}(\Ax(h)) \subseteq P_g.
  \]
  Then the group $G$ generated by $X$ is free and $X$ is a free basis for $G$.
\end{lemma}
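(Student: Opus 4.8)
The plan is the classical ping-pong (table-tennis) argument, where the regions of the various ``players'' are half-trees of $T$ carved out by the segments $P_g$. For each $g\in X$, orient $\Ax(g)$ so that $g$ translates in the positive direction and identify $\Ax(g)$ isometrically with $\R$; write $P_g=(p_g^-,p_g^+)$, so that $p_g^+-p_g^-=l(g)$ and hence $g\cdot p_g^-=p_g^+$. Define the closed half-trees
\[
  A_g^+=\{\,w\in T\mid \proj_{\Ax(g)}(w)\ge p_g^+\,\},\qquad
  A_g^-=\{\,w\in T\mid \proj_{\Ax(g)}(w)\le p_g^-\,\},
\]
and for $a\in X^\pm$ put $S_a=A_g^+$ when $a=g\in X$ and $S_a=A_g^-$ when $a=g^{-1}$ with $g\in X$; finally set $S_0=T\setminus\bigcup_{g\in X}(A_g^+\cup A_g^-)$. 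The first step is the \emph{push-in property}: since $g$ is an isometry preserving the convex set $\Ax(g)$, we have $\proj_{\Ax(g)}\circ g=g\circ\proj_{\Ax(g)}$ and $g$ acts on $\Ax(g)$ as the translation $x\mapsto x+l(g)$; thus if $w\notin A_g^-$ then $\proj_{\Ax(g)}(w)>p_g^-$, so $\proj_{\Ax(g)}(gw)=\proj_{\Ax(g)}(w)+l(g)>p_g^+$, i.e.\ $gw\in A_g^+$. Hence $g(T\setminus A_g^-)\subseteq A_g^+$, and symmetrically $g^{-1}(T\setminus A_g^+)\subseteq A_g^-$; in the new notation, $a\,(T\setminus S_{a^{-1}})\subseteq S_a$ for every $a\in X^\pm$.

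The second step is \emph{disjointness}: the $2\abs{X}$ half-trees $\{A_g^\pm : g\in X\}$ are pairwise disjoint, and moreover $P_h\cap(A_g^+\cup A_g^-)=\varnothing$ for all $g,h$, so that $S_0\supseteq\bigcup_{g\in X}P_g\ne\varnothing$. For fixed $g$, $A_g^+\cap A_g^-=\varnothing$ because the rays $[p_g^+,\infty)$ and $(-\infty,p_g^-]$ of $\Ax(g)$ are disjoint. For $g\ne h$, the hypothesis $\proj_{\Ax(g)}(\Ax(h))\subseteq P_g$ forces $\Ax(g)\cap\Ax(h)$ to be a compact segment (possibly a point, or empty), as an unbounded intersection would project onto an unbounded subset of $\Ax(g)$. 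If $w\in A_g^\epsilon$ then $\proj_{\Ax(g)}(w)\notin P_g\supseteq\proj_{\Ax(g)}(\Ax(h))$, so the point $c\in\Ax(g)$ at which $\Ax(h)$ branches off $\Ax(g)$ on the side of $\proj_{\Ax(g)}(w)$ separates $w$ from $\Ax(h)$; hence the geodesic from $w$ to $\Ax(h)$ passes through $c$, and $\proj_{\Ax(h)}(w)=\proj_{\Ax(h)}(c)\in\proj_{\Ax(h)}(\Ax(g))\subseteq P_h$ (using $c\in\Ax(g)$). In particular $w\notin A_h^{\epsilon'}$ for either sign. The same computation applied to a point $w\in\Ax(h)$, for which $\proj_{\Ax(g)}(w)\in P_g$ and hence $w\notin A_g^\pm$, gives $P_h\cap(A_g^+\cup A_g^-)=\varnothing$ (the case $h=g$ being immediate).

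For the conclusion, if $\abs{X}\le1$ the statement is trivial, since then $G$ is cyclic generated by a hyperbolic, hence infinite-order, element. Otherwise we run ping-pong: by disjointness, for $a,b\in X^\pm$ with $b\ne a^{-1}$ we have $S_b\subseteq T\setminus S_{a^{-1}}$, so the push-in property gives $aS_b\subseteq S_a$; likewise $aS_0\subseteq S_a$. Given a non-trivial reduced word $w=a_1\cdots a_n$ over $X^\pm$, applying this $n$ times yields $wS_0\subseteq S_{a_1}$, and since $S_0\ne\varnothing$ and $S_0\cap S_{a_1}=\varnothing$ we conclude $w\ne1$ in $G$. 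Therefore the canonical homomorphism from the free group on $X$ onto $G$ is injective, so $X$ is a free basis for $G$ and $G$ is free.

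I expect the only step requiring genuine care to be the disjointness argument: extracting from ``$\proj_{\Ax(g)}(\Ax(h))\subseteq P_g$'' that distinct axes meet in at most a compact segment, and that every point of $A_g^\epsilon$ consequently projects into $P_h$ under $\proj_{\Ax(h)}$. This is the tree-geometry bookkeeping about the bridge between two lines, and it has to be carried out uniformly across the degenerate cases in which $\Ax(g)$ and $\Ax(h)$ are disjoint or meet in a single point; the remaining steps are the standard ping-pong formalism.
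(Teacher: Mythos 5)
Your proof is correct. Note, however, that the paper does not prove this lemma at all: it is imported verbatim as Lemma 3.1 of Conder's paper \cite{conder}, so there is no in-paper argument to compare against. Your write-up supplies a self-contained proof by the standard ping-pong mechanism, and the two points that actually need checking are both handled soundly: (i) the push-in property, which follows from $\proj_{\Ax(g)}(gw)=g\,\proj_{\Ax(g)}(w)$ because $g$ preserves the convex set $\Ax(g)$ and acts on it as a translation by $l(g)$, together with $p_g^+ - p_g^- = l(g)$; and (ii) the disjointness of the half-trees $A_g^{\pm}$ for distinct $g,h\in X$, which you reduce correctly to the tree fact that for $w$ with $\proj_{\Ax(g)}(w)\notin P_g$ every geodesic from $w$ to $\Ax(h)$ passes through the endpoint $c$ of $\proj_{\Ax(g)}(\Ax(h))$ nearest $\proj_{\Ax(g)}(w)$, whence $\proj_{\Ax(h)}(w)=\proj_{\Ax(h)}(c)\in P_h$; this argument does cover the degenerate configurations (axes disjoint or meeting in a point), since in each case $\proj_{\Ax(g)}(\Ax(h))$ is a nonempty closed bounded subsegment of $P_g$. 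The concluding ping-pong step, using that $S_0$ contains the nonempty sets $P_g$ and is disjoint from every half-tree, is the standard formalism and is applied correctly (the separate treatment of $\abs{X}\le 1$ is not even needed, since $S_0\neq\varnothing$ already makes the word argument work for single-letter words). So the proposal is a valid proof of the cited lemma, somewhat more detailed than the source the paper relies on.
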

Given some hyperbolic $g \in \Aut(T)$, let $U^0_g$ be the half-open interval $(u^-_g, u^+_g]$ of $\Ax(g)$ with radius $l(g)/2$ centred at $\proj_{\Ax(g)}(\vo)$, such that $[\vo, u^+_g] < [\vo, u^-_g]$. Let $U^-_g$ and $U^+_g$ be the components of $\Ax(g) \setminus U^0_g$ such that $u^-_g \in U^-_g$. Define $U_g^\pm = U_g^+ \cup U_g^-$. Define $U^0_{g^{-1}} = U^0_g$, $u^+_{g^{-1}} = u^+_g$, and other values similarly. Note that $U_g^+$, $U_g^-$, $u_g^+$, and $u_g^-$ are independent of the direction along which $g$ translates $\Ax(g)$.
\begin{lemma}\label{lem:translate-half}
  Let $g \in \Aut(T)$ be hyperbolic.
  \begin{enumerate}
    \item There is a unique $u_g \in \{u_g^+, u_g^-\} \cap [\vo, g\vo]$. Furthermore, $\abs{g} = 2d(\vo, u_g)$.
    \item $[\vo, g\vo] \cap U_g^{\pm} \neq \varnothing$.
  \end{enumerate}
\end{lemma}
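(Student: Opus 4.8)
The plan is to reduce everything to the standard description of the path $[\vo, g\vo]$ for a hyperbolic isometry. Set $m = \proj_{\Ax(g)}(\vo)$. As in the proof of Proposition~\ref{prop:translation-length-formula} (via \cite[Chapter I.6.4, Propositions 23 and 24]{serre-trees}), one has $[\vo, g\vo] = [\vo, m] \cup [m, gm] \cup [gm, g\vo]$, where $[m, gm] \subseteq \Ax(g)$ has length $l(g)$ and $d(\vo, m) = d(g\vo, gm)$; moreover $[\vo, m]$ meets $\Ax(g)$ only in $m$ and $[gm, g\vo]$ meets it only in $gm$, so $[\vo, g\vo] \cap \Ax(g) = [m, gm]$, and $\abs{g} = 2d(\vo, m) + l(g)$. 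Recall also that $l(g) > 0$, since $g$ is hyperbolic.

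For part~(1): by construction $u_g^+$ and $u_g^-$ are the two points of $\Ax(g)$ at distance $l(g)/2$ from $m$, one on each of the two rays of $\Ax(g)$ issuing from $m$. The ray through $gm$ contains exactly one of them; since $0 < l(g)/2 < l(g)$, that point — call it $u_g$ — lies in the interior of $[m, gm] \subseteq [\vo, g\vo]$. The other lies on the opposite ray, hence outside $[m, gm] = [\vo, g\vo] \cap \Ax(g)$, hence outside $[\vo, g\vo]$. So $u_g$ is the unique element of $\{u_g^+, u_g^-\} \cap [\vo, g\vo]$, and $d(\vo, u_g) = d(\vo, m) + l(g)/2 = \abs{g}/2$.

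For part~(2): by the above, $[\vo, g\vo]$ contains the arc $[u_g, gm]$ of $\Ax(g)$, which is nondegenerate since $d(u_g, gm) = l(g)/2 > 0$. I claim $[u_g, gm] \setminus \{u_g^+\}$ is disjoint from $U_g^0$: a point of this set is either $u_g$, in which case $u_g \neq u_g^+$ (it has been removed), so $u_g = u_g^-$, which is not in $U_g^0$; or it lies on the arc from $u_g$ to $gm$ distinct from $u_g$, hence at distance strictly greater than $l(g)/2$ from $m$, hence outside $U_g^0$. Thus $[u_g, gm] \setminus \{u_g^+\}$ is a nonempty connected subset of $\Ax(g) \setminus U_g^0$, so it lies in a single component, $U_g^+$ or $U_g^-$; in either case $[\vo, g\vo] \cap U_g^\pm \neq \varnothing$.

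The argument is essentially bookkeeping, and I expect no genuine obstacle; the one point requiring care is that the labels $u_g^+, u_g^-$ are fixed by the path ordering $\ltp$ and are independent of the direction in which $g$ translates $\Ax(g)$, so one may not assume $gm$ lies on the ``$u_g^+$ side''. This is why both parts are phrased symmetrically in $u_g^+$ and $u_g^-$, and why in part~(2) it is the closed endpoint $u_g^+$ of $U_g^0$, rather than $u_g$, that must be excised in order to land inside a single component of $\Ax(g) \setminus U_g^0$.
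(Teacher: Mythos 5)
Your proof is correct and takes essentially the same approach as the paper: both decompose $[\vo, g\vo]$ as $[\vo, m] \cup [m, gm] \cup [gm, g\vo]$ with $m = \proj_{\Ax(g)}(\vo)$ and identify $u_g$ as the midpoint of $[m, gm]$, giving part (1). For part (2) the paper simply notes that $gm$ itself lies in $U_g^\pm \cap [\vo, g\vo]$ (being at distance $l(g) > l(g)/2$ from $m$ along $\Ax(g)$); your connectedness argument excising $u_g^+$ is valid but more elaborate than needed.
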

\begin{proof}
  Let $a = \proj_{\Ax(g)}(\vo)$, as in Figure \ref{fig:axis-partition}. We may write $[\vo, g\vo] = [\vo, a] \cup [a, ga] \cup [ga, g\vo]$.
  Let $u_g$ be the midpoint of $[a, ga]$. Now $d(a, u_g) = d(u_g, a) = l(g)/2$, and $u_g$ is the unique point of $[a, ga]$ with this property. Hence $\{u_g^+, u_g^-\} \cap [\vo, g\vo] = \{u_g\}$. Also,
  \[
    \abs{g} = 2d(\vo, a) + 2d(a, u_g) = 2d(\vo, u_g),
  \] proving (1). Since $u_g$ lies in the boundary of $U_g^\pm$, it follows that $ga \in U_g^\pm$, proving (2).
\end{proof}
\begin{figure}[ht]
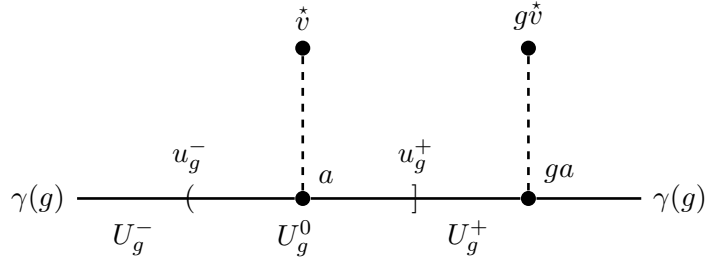

  \centering
  \tikzfig{tikzit/fund-def}
  \caption{The line segments  $U_g^-$, $U_g^0$, and $U_g^+$}
  \label{fig:axis-partition}
\end{figure}
\begin{definition}\label{def:fundamental-system}
  Define $U = \bigcup_{g \in X} \{U^-_g, U^0_g, U^+_g\}$.
  If $X$ has no elliptic elements and $\proj_{\Ax(g)}(\Ax(h)) \subseteq U^0_g$ for all $h \in X \setminus \{g\}$, then $U$ is the \emph{fundamental system} of $X$.
\end{definition}
It is clear that if $X$ admits a fundamental system, then $X$ satisfies the hypotheses of the Ping-Pong Lemma, and is therefore a free basis of $G$.

The following lemmas show how paths on $T$ interact with the translation axes of hyperbolic elements of $\Aut(T)$. 
\begin{lemma}\label{lem:translate-overlap}
  Let $w \in T$ and let $g \in \Aut(T)$ be hyperbolic. Define
  \[
    x = \proj_{\Ax(g)}(\vo), \quad y = \proj_{\Ax(g)}(w), \quad p = [\vo, w] \cap \Ax(g), \quad q = [\vo, gw] \cap \Ax(g).
  \]
  \begin{enumerate}
    \item If $p = \varnothing$, then $q = [x, gx]$ and $d(\vo, gw) > d(\vo, w) + l(g)$.
    
    \item If $p = [x, y]$ and $gy = x$, then $q \in \{\varnothing, \{x\}\}$ and $d(\vo, gw) \leq d(\vo, w) - l(g)$.
    
    \item If $p = [x, y]$ and either $x = y$ or $g$ translates $y$ away from $x$, then $q = [x, gy]$ and $d(\vo, gw) = d(\vo, w) + l(g)$.
    
    \item If $p = [x, y]$ where $x \notin \{y, gy\}$ and $g$ translates $y$ towards $x$, then $q = [x, gy]$ and $d(\vo, gw) = d(\vo, w) + \abs[\big]{d(x, y) - l(g)} - d(x, y)$.
  \end{enumerate}
\end{lemma}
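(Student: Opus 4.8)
The plan is to reduce the whole lemma to the standard structure theory of nearest-point projection onto a convex subtree, applied to the line $A := \Ax(g)$, together with the equivariance $\proj_A(g u) = g\,\proj_A(u)$ for every vertex $u$ (valid since $g$ preserves $A$).

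I would first record two facts. \textbf{(i)} If $u' := \proj_A(u)$ and $v' := \proj_A(v)$ are distinct, then $[u,v] \cap A = [u',v']$ and $d(u,v) = d(u,u') + d(u',v') + d(v',v)$; this is immediate once one notes that $[u,u'] \cap A = \{u'\}$, $[v,v'] \cap A = \{v'\}$, $[u',v'] \subseteq A$ by convexity, and that the concatenation $[u,u'] \cup [u',v'] \cup [v',v]$ has no backtracking. \textbf{(ii)} One has $[\vo,w] \cap A \neq \varnothing$ iff $x \in [\vo,w]$, in which case $\vo, x, y, w$ occur in this order on $[\vo,w]$, $[x,y] \subseteq A$, and $[\vo,w] \cap A = [x,y]$; while if $[\vo,w] \cap A = \varnothing$ then $x = y$ (because $x = \proj_A(\vo)$ lies on $[\vo,r]$ for every $r \in A$, so any point of $[\vo,w] \cap A$ would put $x$ on $[\vo,w]$, and a symmetric argument forces $y = x$). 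Fact (ii), together with the observation that when $x \neq y$ the element $g$ translates $y$ either towards or away from $x$, shows that the four listed cases are exhaustive.

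The argument is then a short case check, using that $g$ is hyperbolic, so $d(x,gx) = d(y,gy) = l(g)$, and that $\proj_A(gw) = gy$. In cases (1), (3), (4) one has $x \neq gy$ --- in (1) since $gy = gx \neq x$; in (3) since either $x = y$ (so $gy = gx \neq x$) or $gy$ lies strictly beyond $y$ from $x$; and in (4) by hypothesis --- so fact (i) applies to $(u,v) = (\vo, gw)$ and gives at once $q = [x, gy]$ together with $d(\vo, gw) = d(\vo, x) + d(x, gy) + d(gy, gw)$. Substituting $d(gy, gw) = d(y, w)$ and the relevant value of $d(x, gy)$ --- namely $l(g)$ when $x = y$, $d(x,y) + l(g)$ in the ``away'' subcase of (3), and $\abs{d(x,y) - l(g)}$ in (4) (reading positions along $A$ with $x = 0$, $y = d(x,y)$, and $g$ translating towards $x$) --- and comparing against $d(\vo, w) = d(\vo,x) + d(x,y) + d(y,w)$ (or $d(\vo,x) + d(x,w)$ when $x = y$) yields the claimed formulas. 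Case (1) is the same computation once $d(\vo, x)$ and $d(x, gx) + d(gx, gw) = l(g) + d(x, w)$ are expanded through the bridge vertex $m := m(\vo, w, x)$; the surplus over $d(\vo,w) + l(g)$ comes out to $2\,d(m, x) = 2\,d([\vo,w], A)$, which is positive exactly because $[\vo,w] \cap A = \varnothing$.

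The case requiring genuine care is (2), the one where fact (i) does not apply: here $gy = x$, so $\vo$ and $gw$ share the projection $x$ onto $A$ and $[\vo, gw]$ may avoid $A$ altogether. I would handle it directly. Since $[\vo, x] \cap A = \{x\}$ and (applying $g$ to $[w,y] \cap A = \{y\}$) also $[gw, x] \cap A = \{x\}$, the median $z := m(\vo, gw, x)$ lies on both $[\vo, x]$ and $[gw, x]$, hence lies on $A$ only if $z = x$. Writing $[\vo, gw] = [\vo, z] \cup [z, gw]$ and intersecting with $A$ then gives $q = \{x\}$ if $z = x$ and $q = \varnothing$ otherwise, so $q \in \{\varnothing, \{x\}\}$; and $d(\vo, gw) = d(\vo, x) + d(x, gw) - 2\,d(x, z)$, which is $d(\vo, w) - l(g) - 2\,d(x,z) \le d(\vo, w) - l(g)$ using $d(x,y) = l(g)$, $d(x, gw) = d(y, w)$, and $d(\vo, w) = d(\vo, x) + d(x, y) + d(y, w)$. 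Beyond the bookkeeping in fact (ii) and this degenerate-projection analysis, everything is mechanical.
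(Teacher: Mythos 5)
Your proof is correct, and all four cases check out, including the degenerate case (2) where $\vo$ and $gw$ have the same projection onto $\Ax(g)$ and your gate formula is unavailable. The organization is genuinely different from the paper's. The paper routes both $[\vo,w]$ and $[\vo,gw]$ through the axis as explicit walks $P = [\vo,x] \oplus p \oplus [y,w]$ and $Q = [\vo,x] \oplus [x,gy] \oplus [gy,gw]$, records the single identity $\abs{Q} = \abs{P} + \abs[\big]{d(x,y)+a} - d(x,y)$ with $a = \pm l(g)$, and then settles each case by the criterion that a walk realises the distance between its endpoints exactly when it does not backtrack (read off a figure); in particular cases (1) and (2) are finished by the one-sided bounds $\abs{P} > d(\vo,w)$ and $\abs{Q} \geq d(\vo,gw)$. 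You instead apply the projection decomposition $d(u,v) = d(u,u') + d(u',v') + d(v',v)$ (valid whenever the projections $u',v'$ onto the convex subtree differ) together with equivariance $\proj_{\Ax(g)}(gw) = g\,\proj_{\Ax(g)}(w)$, which dispatches (1), (3), (4) uniformly, and you treat (2) --- where the projections coincide --- by a median argument, obtaining the exact identity $d(\vo,gw) = d(\vo,w) - l(g) - 2d(x,z)$ for your median $z$ of $\vo$, $gw$, $x$, and similarly the exact surplus $2d(m,x)$ in case (1). What your version buys is that the identification of $q$, the strictness in (1), and the inequality in (2) drop out of explicit formulas rather than figure-reading, and you even get sharper quantitative statements than the lemma asserts; what the paper's version buys is a single computation covering all cases before the case split, at the cost of leaving the equality-versus-inequality bookkeeping to the geodesic/backtracking criterion. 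Your auxiliary facts (the gate property, exhaustiveness of the four cases, and $[gw,x] \cap \Ax(g) = \{x\}$ via applying $g$ to $[w,y] \cap \Ax(g) = \{y\}$) are standard and correctly justified, so no gaps remain.
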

\begin{midfigure}[ht]
  \renewcommand\thesubfigure{\arabic{subfigure}}
  \centering
  \begin{subfigure}[b]{0.5\textwidth}
    \centering
    \tikzfig{tikzit/translate1}
    \caption{$p = \varnothing$}
    \label{fig:translate1}
  \end{subfigure}%
  \hfill
  \begin{subfigure}[b]{0.5\textwidth}
    \centering
    \tikzfig{tikzit/translate2}
    \caption{$p = [x, g^{-1}x]$}
    \label{fig:translate2}
  \end{subfigure}%
  \par\bigskip
  \begin{subfigure}[b]{0.5\textwidth}
    \centering
    \tikzfig{tikzit/translate3}
    \caption{$p = [x, y]$ and $g$ translates $y$ away from $x$}
    \label{fig:translate3}
  \end{subfigure}%
  \hfill
  \begin{subfigure}[b]{0.5\textwidth}
    \centering
    \tikzfig{tikzit/translate4}
    \caption{$p = [x, y]$ and $g$ translates $y$ towards $x$}
    \label{fig:translate4}
  \end{subfigure}
  \caption{The cases of Lemma \ref{lem:translate-overlap}}\label{fig:translate}
\end{midfigure}
\begin{proof}
  Let $P$ be the walk $[\vo, x] \oplus p \oplus [y, w]$, where $\oplus$ denotes concatenation. Let $Q$ be the walk $[\vo, x] \oplus q \oplus [gy, gw]$. We see that $\abs{Q} - \abs{P} = \abs{q} - \abs{p}$. Since $g$ translates $y$ along $\Ax(g)$ which is isometric to $\R$, we find that $\abs{q} = \abs[\big]{d(x, y) + a}$, where $a = l(g)$ if $g$ translates $y$ away from $x$, and $a = -l(g)$ if $g$ translates $y$ towards $x$. Therefore
  \[
    \abs{Q} = \abs{P} + \abs[\big]{d(x, y) + a} - d(x, y).
  \]
  Note that $\abs{P} \geq d(\vo, w)$ and $\abs{Q} \geq d(\vo, gw)$, with equality if and only if $P = [\vo, w]$ and $Q = [\vo, gw]$ respectively. Figure \ref{fig:translate} depicts each case. We achieve the formulae in cases (1)--(4) by the following substitutions:
  \begin{itemize}
    \item
    In case (1), $p = \varnothing$ so $P$ backtracks. Thus $\abs{P} > d(\vo, w)$, $Q = [\vo, gw]$, and $d(x, y) = 0$.
    
    \item
    In case (2), $q$ is either empty or a point. It follows that $\abs{Q} \geq d(\vo, gw)$, $P = [\vo, w]$, and $d(x, y ) = l(g) = -a$.
    
    \item
    In cases (3) and (4), $P = [\vo, w]$ and $Q = [\vo, gw]$. The cases are distinguished by whether $a$ is positive or negative. \qedhere
  \end{itemize}
\end{proof}
\begin{lemma}\label{lem:path-overlap-comparison}
  Let $w \in T$ and let $g \in \Aut(T)$ be hyperbolic. Suppose $[\vo, w] \cap \Ax(g)$ is a path $[x, y]$ for some distinct $x$ and $y$, and that $g$ translates $y$ towards $x$.
  \begin{itemize}
  \item If $d(x, y) > l(g)/2$, then $d(\vo, w) > d(\vo, gw)$.
  \item If $d(x, y) = l(g)/2$, then $d(\vo, w) = d(\vo, gw)$.
  \item If $d(x, y) < l(g)/2$, then $d(\vo, w) < d(\vo, gw)$.
  \end{itemize}
\end{lemma}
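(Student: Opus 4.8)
The plan is to deduce this directly from Lemma~\ref{lem:translate-overlap}. By the conventions of Section~\ref{sec:reduction}, writing $[\vo, w] \cap \Ax(g)$ as the path $[x, y]$ means that $x$ precedes $y$ along the geodesic from $\vo$ to $w$, so that $x = \proj_{\Ax(g)}(\vo)$ and $y = \proj_{\Ax(g)}(w)$; we are then in the setting of Lemma~\ref{lem:translate-overlap} with $p = [x, y]$ a nondegenerate path. Since $g$ is a nontrivial translation of $\Ax(g)$ and translates $y$ towards $x$ by hypothesis, we are neither in case~(1) of that lemma (which needs $p = \varnothing$) nor in case~(3) (which needs $g$ to translate $y$ away from $x$): we are in case~(2) if $gy = x$, and in case~(4) if $gy \neq x$.

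In case~(2), $d(x, y) = d(y, gy) = l(g)$, so $d(x, y) > l(g)/2$, and Lemma~\ref{lem:translate-overlap}~(2) gives $d(\vo, gw) \leq d(\vo, w) - l(g) < d(\vo, w)$; this is the first bullet. In case~(4) we have $x \notin \{y, gy\}$, and Lemma~\ref{lem:translate-overlap}~(4) gives
\[
  d(\vo, gw) - d(\vo, w) = \abs[\big]{d(x, y) - l(g)} - d(x, y).
\]
It then remains to record the sign of $\abs{t - \ell} - t$ for $t = d(x, y) > 0$ and $\ell = l(g) > 0$: when $t \leq \ell$ it equals $\ell - 2t$, which is positive, zero, or negative according as $t < \ell/2$, $t = \ell/2$, or $t > \ell/2$; when $t > \ell$ it equals $-\ell < 0$, consistent with $t > \ell/2$. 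Since $d(x, y) = l(g)$ forces $gy = x$, the second and third bullets and the part of the first bullet with $d(x, y) \neq l(g)$ all fall under case~(4); combined with case~(2) this gives all three statements.

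I do not expect a genuine obstacle: once the reduction to Lemma~\ref{lem:translate-overlap} is in place, the rest is an elementary case split plus the sign computation above. The only delicate point is the orientation bookkeeping at the outset --- one must match the hypothesis ``$g$ translates $y$ towards $x$'' with case~(2) or~(4) of Lemma~\ref{lem:translate-overlap}, and not case~(3), which produces the opposite inequality. Intuitively, the portions of $[\vo, w]$ and $[\vo, gw]$ lying off $\Ax(g)$ have equal total length, while the on-axis overlap has length $d(x, y)$ for $w$ and length $\abs{d(x, y) - l(g)}$ for $gw$; the latter is shorter exactly when $d(x, y) > l(g)/2$, which is the content of the three bullets.
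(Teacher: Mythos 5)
Your proposal is correct and follows essentially the same route as the paper: reduce to Lemma~\ref{lem:translate-overlap}, note that the hypotheses place you in case~(2) when $d(x,y)=l(g)$ (i.e.\ $gy=x$) and case~(4) otherwise, and then read off the sign of $\abs{d(x,y)-l(g)}-d(x,y)$ relative to $l(g)/2$. The paper organises the case split by comparing $d(x,y)$ with $l(g)$ rather than by whether $gy=x$, but this is the same dichotomy, and your explicit remark identifying $x=\proj_{\Ax(g)}(\vo)$ and $y=\proj_{\Ax(g)}(w)$ is a point the paper leaves implicit.
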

\begin{proof}
  If $d(x, y) > l(g)$, then by Lemma \ref{lem:translate-overlap} (4),
  \[
    d(\vo, gw) = d(\vo, w) + \abs{d(x, y) - l(g)} - d(x, y) = d(\vo, w) - l(g) < d(\vo, w).
  \]
  If $d(x, y) = l(g)$, then by Lemma \ref{lem:translate-overlap} (2), $d(\vo, gw) < d(\vo, w)$.
  If $d(x, y) < l(g)$, then by Lemma \ref{lem:translate-overlap} (4),
  \[
  d(\vo, gw) = d(\vo, w) + \abs{d(x, y) - l(g)} - d(x, y) = d(\vo, w) + l(g) - 2d(x, y).
  \]
  We see that $d(\vo, w) \leq d(\vo, gw)$ if and only if $d(x, y) \leq l(g)/2$, with equality when $d(x, y) = l(g)/2$.
\end{proof}
\begin{lemma}\label{lem:proj-is-intersect}
  Let $w \in T$ and let $g \in \Aut(T)$ be hyperbolic. The following are equivalent:
  \begin{enumerate}
    \item $\proj_{\Ax(g)}(w) \in U_g^\pm$.
    \item $[\vo, w] \cap U_g^\pm \neq \varnothing$.
    \item $[\vo, g^\e w] < [\vo, w]$ for some $\e \in \{1, -1\}$.
  \end{enumerate}
\end{lemma}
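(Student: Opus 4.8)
The plan is to prove the two equivalences $(1)\Leftrightarrow(2)$ and $(1)\Leftrightarrow(3)$ separately. Throughout I would write $x=\proj_{\Ax(g)}(\vo)$ and $y=\proj_{\Ax(g)}(w)$, and rely on two elementary facts: that $[\vo,w]\cap\Ax(g)$ is either empty — which forces $x=y$ — or equal to the segment $[x,y]$, which when $x\neq y$ is moreover contained in $[\vo,w]$; and that $U_g^0=(u_g^-,u_g^+]$ is a convex subinterval of the line $\Ax(g)$, of radius $l(g)/2$ centred at $x$, so that $x\in U_g^0$ and hence $x\notin U_g^\pm$.

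For $(1)\Leftrightarrow(2)$: if $x=y$ then $[\vo,w]\cap\Ax(g)\subseteq\{x\}\subseteq U_g^0$, so neither statement holds, and I may assume $x\neq y$, whence $[\vo,w]\cap U_g^\pm=[x,y]\cap U_g^\pm$. Then (1) immediately gives (2), since $y$ is an endpoint of $[x,y]$. Conversely, if some point of $[x,y]$ lies outside the convex set $U_g^0$ while the endpoint $x$ lies inside it, then the other endpoint $y$ must lie outside $U_g^0$ as well, which is (1).

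For $(1)\Rightarrow(3)$: assuming $y\in U_g^\pm$ I would first note $y\neq x$, so $d(x,y)\geq l(g)/2$, and let $g^\e$ be whichever of $g,g^{-1}$ translates $y$ towards $x$. If $d(x,y)>l(g)/2$ then Lemma~\ref{lem:path-overlap-comparison} gives $d(\vo,g^\e w)<d(\vo,w)$, hence $[\vo,g^\e w]\ltp[\vo,w]$. If $d(x,y)=l(g)/2$, then $y$ and $g^\e y$ are precisely the two boundary points $u_g^-,u_g^+$ of $U_g^0$; since $y\in U_g^\pm$ while $u_g^+\in U_g^0$, we get $y=u_g^-$ and $g^\e y=u_g^+$, so $[\vo,g^\e y]\ltp[\vo,y]$ by the definition of $u_g^+$. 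Now Lemma~\ref{lem:path-overlap-comparison} gives $d(\vo,g^\e w)=d(\vo,w)$, and $[\vo,g^\e w]$ and $[\vo,w]$ leave the common prefix $[\vo,x]$ along the same edges as $[\vo,g^\e y]$ and $[\vo,y]$ do, so the consistency of the orderings $(<_i)$ propagates $[\vo,g^\e y]\ltp[\vo,y]$ to $[\vo,g^\e w]\ltp[\vo,w]$. For $(3)\Rightarrow(1)$ I would run the same case analysis contrapositively: if $y\notin U_g^\pm$ then either $d(x,y)<l(g)/2$ (possibly $x=y$) or $d(x,y)=l(g)/2$ with $y=u_g^+$, and Lemmas~\ref{lem:translate-overlap} and~\ref{lem:path-overlap-comparison} then show $[\vo,w]\ltp[\vo,g^\e w]$ for \emph{both} $\e\in\{1,-1\}$, so (3) fails.

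The step I expect to be the main obstacle is the boundary case $d(x,y)=l(g)/2$: there $d(\vo,g^\e w)$ is unchanged by $g^\e$, so the conclusion of (3) hinges entirely on the lexicographic tie-breaker in $\ltp$, and one must verify that this tie-breaker is exactly calibrated to the asymmetric convention (which of the two boundary points of $U_g^0$ is $u_g^+$) in the definition of $U_g^0$. Making this precise requires tracking common prefixes carefully — noting that $[\vo,g^\e w]$ and $[\vo,g^\e y]$ (resp.\ $[\vo,w]$ and $[\vo,y]$) agree one edge past $x$, and that each $<_i$ is a total order compatible with extension to longer paths, so a strict comparison of short prefixes is both inherited by and reflected from longer paths of equal length. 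This is a small but fiddly bookkeeping argument, and everything else in the proof is an immediate application of the distance formulas already established.
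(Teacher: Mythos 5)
Your proposal is correct and follows essentially the same route as the paper: both rest on Lemmas \ref{lem:translate-overlap} and \ref{lem:path-overlap-comparison}, the case split on $d(x,y)$ versus $l(g)/2$, and the convention $[\vo,u_g^+]\ltp[\vo,u_g^-]$ to settle the boundary case (the paper just organises it as the cycle $(1)\Rightarrow(2)\Rightarrow(3)\Rightarrow(1)$ rather than two separate equivalences). Your explicit treatment of the lexicographic propagation from $[\vo,g^\e y]\ltp[\vo,y]$ to $[\vo,g^\e w]\ltp[\vo,w]$ is exactly the step the paper's proof leaves implicit, and it is handled correctly.
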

\begin{proof}
  \begin{enumerate}[wide, labelwidth=!, itemindent=!, labelindent=0pt]
    \item[(1) $\Rightarrow$ (2):] Suppose $y = \proj_{\Ax(g)}(w) \in U_g^\pm$. Let $x = \proj_{\Ax(g)}(\vo)$. Since $x \in U_g^0$, $[\vo, x]$ and $[y, w]$ are disjoint, so $[\vo, w] = [\vo, x] \cup [x, y] \cup [y, w]$.
    \item[(2) $\Rightarrow$ (3):] Suppose $[\vo, w] \cap U_g^\pm \neq \varnothing$. Then $[\vo, w] \cap \Ax(g)$ is a path $[x, y]$, where $x = \proj_{\Ax(g)}(\vo)$ and $y \in U_g^\pm$. It follows that $d(x, y) \geq l(g)/2$. We may assume that $g$ translates $y$ towards $x$. By Lemma \ref{lem:path-overlap-comparison}, $d(\vo, gw) \leq d(\vo, w)$. If $d(\vo, gw) = d(\vo, w)$, then $d(x, y) = l(g)/2$, so $y = u_g^-$. Therefore $[\vo, gw] < [\vo, w]$.
    
    \item[(3) $\Rightarrow$ (1):] Inverting $g$ if necessary, suppose $[\vo, gw] < [\vo, w]$. Now case (2) or (4) of Lemma \ref{lem:translate-overlap} must hold, that is $[\vo, w] \cap \Ax(g)$ is a path $[x, y]$ such that $g$ translates $y$ towards $x$. By Lemma \ref{lem:path-overlap-comparison}, $d(x, y) \geq l(g)/2$. If $d(x, y) = l(g)/2$, then $y \in \{u_g^+, u_g^-\}$ and $[\vo, gy] < [\vo, y]$, so $y = u_g^-$. In either case, $\proj_{\Ax(g)}(w) = y \in U_g^\pm$. \qedhere
  \end{enumerate}
\end{proof}
\begin{lemma}\label{lem:reduced-path}
  Suppose $X$ contains no elliptic elements and satisfies \textup{N1}. Then $X$ admits a fundamental system if and only if for all $g, h \in X^\pm$, if $g \neq h^{-1}$, then $[\vo, h\vo] < [\vo, gh\vo]$.
\end{lemma}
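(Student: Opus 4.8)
The plan is to recast both conditions as statements about where certain points of $T$ project onto the axes $\Ax(g)$, and then to move between such projections and $\ltp$-comparisons of the geodesics $[\vo,g\vo]$ by means of Lemma~\ref{lem:proj-is-intersect}. Throughout write $a_h=\proj_{\Ax(h)}(\vo)$.

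First I would normalise both sides. Since $\Ax(h)=\Ax(h^{-1})$, $U^0_g=U^0_{g^{-1}}$, and $X$ satisfies N1, the existence of a fundamental system for $X$ is equivalent to requiring $\proj_{\Ax(g)}(\Ax(h))\subseteq U^0_g$ for all $g,h\in X^\pm$ with $g\notin\{h,h^{-1}\}$; and, as $\bigcup_{w\in\Ax(h)}[\vo,w]=[\vo,a_h]\cup\Ax(h)$, applying Lemma~\ref{lem:proj-is-intersect} to each $w\in\Ax(h)$ turns this into
\[
  ([\vo,a_h]\cup\Ax(h))\cap U^\pm_g=\varnothing\quad\text{for all }g,h\in X^\pm\text{ with }g\notin\{h,h^{-1}\}.\tag{A}
\]
On the other side, the case $g=h$ of the displayed inequality is automatic, since $\abs{h^2}=\abs h+l(h)>\abs h$ by Proposition~\ref{prop:translation-length-formula} and no element of $X$ is elliptic; and, letting $g$ range over all of $X^\pm$, the instances for $g$ and $g^{-1}$ together say that $[\vo,gh\vo]\not\ltp[\vo,h\vo]$ and $[\vo,g^{-1}h\vo]\not\ltp[\vo,h\vo]$, which (as $g^\e h\vo\neq h\vo$ and $\ltp$ is a total order) is exactly $\proj_{\Ax(g)}(h\vo)\in U^0_g$ by Lemma~\ref{lem:proj-is-intersect}, i.e.
\[
  [\vo,h\vo]\cap U^\pm_g=\varnothing\quad\text{for all }g,h\in X^\pm\text{ with }g\notin\{h,h^{-1}\}.\tag{B}
\]
So it suffices to prove the equivalence of (A) and (B).

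For $\text{(A)}\Rightarrow\text{(B)}$, fix $g,h$ and split $[\vo,h\vo]=[\vo,a_h]\cup[a_h,ha_h]\cup[ha_h,h\vo]$, where $[a_h,ha_h]\subseteq\Ax(h)$. The first two pieces lie inside $[\vo,a_h]\cup\Ax(h)$ and hence avoid $U^\pm_g$ by (A) for the pair $(g,h)$. For $[ha_h,h\vo]$ — trivial unless $\vo\notin\Ax(h)$ — let $B$ be the branch of $T$ at the vertex $ha_h$ containing $h\vo$, so $B\cap\Ax(h)=\{ha_h\}$. If $[ha_h,h\vo]$ met $\Ax(g)$ at a point other than $ha_h$, then $\Ax(g)$ would meet the interior of $B$, forcing $ha_h=\proj_{\Ax(h)}(\text{that point})\in\proj_{\Ax(h)}(\Ax(g))$; but $d(a_h,ha_h)=l(h)>l(h)/2$ puts $ha_h\notin U^0_h$, contradicting (A) for the \emph{reverse} pair $(h,g)$ — this is where one genuinely needs the hypothesis for all pairs. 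And if $[ha_h,h\vo]$ meets $\Ax(g)$ only at $ha_h$, that point lies in $[\vo,a_h]\cup\Ax(h)$ and so is not in $U^\pm_g$. Hence $[\vo,h\vo]\cap U^\pm_g=\varnothing$.

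For $\text{(B)}\Rightarrow\text{(A)}$, fix $g,h$; again $[\vo,a_h]\subseteq[\vo,h\vo]$ avoids $U^\pm_g$ by (B), so it remains to show $\Ax(h)\cap U^\pm_g=\varnothing$. Suppose $p\in\Ax(h)\cap U^\pm_g$; then $p\in\Ax(g)\cap\Ax(h)$. If $\Ax(g)=\Ax(h)=\ell$, put $a=\proj_\ell(\vo)$: then $[\vo,g\vo]\supseteq[a,ga]$, a subsegment of $\ell$ of length $l(g)$ issuing from the centre $a$ of $U^0_h$; if $l(g)>l(h)/2$ this leaves $U^0_h$, contradicting (B) for $(h,g)$, so $l(g)\le l(h)/2$, and symmetrically $l(h)\le l(g)/2$, which is impossible for hyperbolic $g,h$. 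Otherwise $I:=\Ax(g)\cap\Ax(h)$ is a \emph{bounded} segment containing $p$ with $p\notin U^0_g$. Parametrising $\Ax(h)$ so that $h$ is the positive shift by $l(h)$ with $a_h$ at $0$ (replacing $h$ by $h^{-1}$ if needed, still a pair with $g$), the task is to show $p$ lies in the fundamental segment $[a_h,ha_h]=[\vo,h\vo]\cap\Ax(h)$, for then $p\in[\vo,h\vo]\cap U^\pm_g$ contradicts (B) for $(g,h)$. Ruling out the \emph{deep overlap} — that some point of $I\cap U^\pm_g$ could be carried outside $[a_h,ha_h]$ by a power of $h$ — is the main obstacle and the step I expect to require the most care; I would handle it by a finite case analysis on whether $g$ and $h$ translate $I$ coherently or oppositely and on where $a_g$ and $a_h$ sit relative to $I$, in each case locating a point of $[\vo,h\vo]$ in $U^\pm_g$ or a point of $[\vo,g\vo]$ in $U^\pm_h$ and contradicting (B). This is a finite-case version of the local-to-global mechanism used elsewhere in the paper (cf.\ Lemma~\ref{lem:no-overlap}): a long overlap of two axes is already witnessed by a word of length $2$ in $X^\pm$.
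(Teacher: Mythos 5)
Your two reformulations are correct: via Lemma~\ref{lem:proj-is-intersect}, the fundamental-system condition is indeed equivalent to your (A), and the displayed path condition of the lemma is equivalent to your (B), and your direct argument for (A)~$\Rightarrow$~(B) (splitting $[\vo,h\vo]=[\vo,a_h]\cup[a_h,ha_h]\cup[ha_h,h\vo]$ and invoking (A) for the reverse pair $(h,g)$ on the last piece) is sound. The genuine gap is in (B)~$\Rightarrow$~(A), which is the harder half of the lemma. After disposing of the case $\Ax(g)=\Ax(h)$, you reduce to showing that a point $p\in\Ax(h)\cap U_g^\pm$ contradicts (B), and at exactly that point you defer the argument to an unspecified ``finite case analysis'' on translation directions and on where $a_g,a_h$ sit relative to $I=\Ax(g)\cap\Ax(h)$; that deferred step \emph{is} the content of this direction, so the proof is incomplete as written. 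Moreover, the one structural claim you do make to set up that analysis --- that $I$ is a \emph{bounded} segment whenever the axes are distinct --- is unjustified: two distinct axes in a tree may share a ray, so boundedness of the overlap is itself a consequence of (B) that would have to be proved, not assumed.

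The paper closes this direction without any analysis of how the axes overlap, and the missing idea is short: any geodesic from $\vo$ that meets $U_g^\pm$ runs along $\Ax(g)$ outward from $a_g=\proj_{\Ax(g)}(\vo)$, and by Lemma~\ref{lem:translate-half} the path $[\vo,g^{\pm 1}\vo]$ also runs along $\Ax(g)$ through the relevant endpoint of $U_g^0$; hence (inverting $g$, $h$ as needed) the offending geodesic already shares with $[\vo,g\vo]$ a point $a'\in U_g^\pm$, and likewise shares with $[\vo,h\vo]$ a point $b'\in U_h^\pm$. Since $a'$ and $b'$ lie on one geodesic issued from $\vo$, one of them precedes the other; say $a'\in[\vo,b']\subseteq[\vo,h\vo]$. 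Then $[\vo,h\vo]\cap U_g^\pm\neq\varnothing$, and Lemma~\ref{lem:proj-is-intersect} gives $[\vo,g^\e h\vo]\ltp[\vo,h\vo]$ for some $\e\in\{1,-1\}$, violating (B). Replacing your projected case analysis by this ``pass to a common boundary point of $U_g^0$'' argument fills the gap and makes the shape of $\Ax(g)\cap\Ax(h)$ irrelevant.
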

\begin{proof}
  Suppose $[\vo, gh\vo] \leq [\vo, h\vo]$ for some $g, h \in X^\pm$ with $g \neq h^{-1}$. If $[\vo, gh\vo] = [\vo, h\vo]$, then $h^{-1}gh\vo = \vo$, so $g$ is elliptic. Hence $[\vo, gh\vo] < [\vo, h\vo]$. Let $a = \proj_{\Ax(g)}(h\vo)$. By Lemma \ref{lem:proj-is-intersect}, $a \in [\vo, h\vo] \cap U_g^\pm$. Let $b \in [\vo, h\vo] \cap U_h^\pm$. We see that either $a \in [\vo, b]$ or $b \in [\vo, a]$. By Lemma \ref{lem:proj-is-intersect}, either $\proj_{\Ax(g)}(b) \in U_g^\pm$ or $\proj_{\Ax(h)}(a) \in U_h^\pm$. Therefore $X$ does not admit a fundamental system.
  
  Conversely, suppose $X$ does not admit a fundamental system. By Lemma \ref{lem:proj-is-intersect}, there exists distinct $g, h \in X$ and $a \in U_g^\pm$, $b \in U_h^\pm$ such that $a \in [\vo, b]$, for example as in Figure \ref{fig:fund-system-paths}. Inverting $g$ and $h$ if necessary, let
  \[
    a' \in [\vo, b] \cap [\vo, g\vo] \cap U_g^{\pm}, \qquad b' \in [\vo, b] \cap [\vo, h\vo] \cap U_h^\pm.
  \]
  Both $a'$ and $b'$ must exist, since both $[\vo, b]$ and $[\vo, g\vo]$ intersect the boundary of $U_g^\pm$, and similarly for $h$.
  We may assume (swapping $g$ and $h$ if necessary) that $a' \in [\vo, b']$. Then $a' \in [\vo, h\vo]$ so, by Lemma \ref{lem:proj-is-intersect}, $[\vo, g^\e h\vo] < [\vo, h\vo]$ for some $\e \in \{1, -1\}$.
\end{proof}
\begin{figure}[ht]
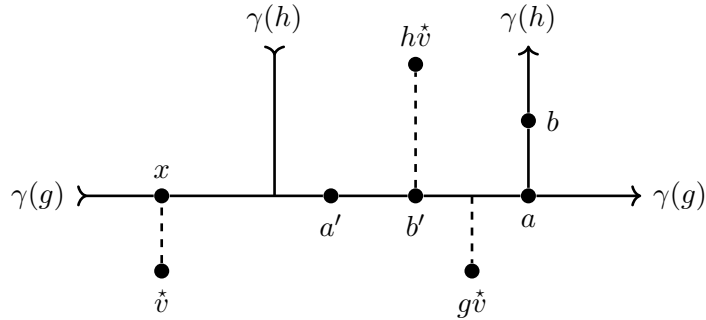

  \centering
  \tikzfig{tikzit/fundamental-system-paths}
  \caption{One construction for Lemma \ref{lem:reduced-path}}
  \label{fig:fund-system-paths}
\end{figure}
\begin{theorem}\label{thm:reduced-equivalent}
  $X$ is strongly \reduced if and only if $X$ admits a fundamental system.
\end{theorem}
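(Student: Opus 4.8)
The plan is to route both implications through Lemma \ref{lem:reduced-path}, which (for $X$ with no elliptic elements satisfying N1) identifies ``$X$ admits a fundamental system'' with the condition
\[
(\star)\qquad [\vo,h\vo]\ltp[\vo,gh\vo]\quad\text{for all }g,h\in X^\pm\text{ with }g\neq h^{-1}.
\]
So it is enough to show that, for such an $X$, condition $(\star)$ is equivalent to N4. First I would clear the side conditions: if $X$ is strongly \reduced it is \reduced by the remark after Definition \ref{def:strongly-reduced}, hence has no elliptic elements and satisfies N1; and if $X$ admits a fundamental system it is a free basis by the Ping-Pong Lemma, has no elliptic elements by definition, and satisfies N1 (otherwise $\Ax(x)=\Ax(x^{-1})$ would have to lie inside the length-$l(x)$ interval $U^0_x$). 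I would also note that both N4 and $(\star)$ already force N2: N4 gives $\abs{xy}\geq\abs{x}$, and in the equivalent form $y\ltg xy$ also $\abs{xy}\geq\abs{y}$; $(\star)$ gives $\abs{gh}\geq\abs{h}$ directly and $\abs{gh}\geq\abs{g}$ by applying it to $(h^{-1},g^{-1})$. So in both halves I may assume N2, hence N2$'$.

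For N4 $\Rightarrow(\star)$ (which yields ``strongly \reduced $\Rightarrow$ fundamental system''): given $g,h\in X^\pm$ with $g\neq h^{-1}$, N4 gives $h\ltg gh$. If $\abs{h}<\abs{gh}$ this is $(\star)$ for the pair by comparing path lengths. If $\abs{h}=\abs{gh}$ then $h\prec gh$, i.e.\ $\{\half(h),\half(h^{-1})\}\prec\{\half(gh),\half((gh)^{-1})\}$; applying Lemma \ref{lem:same-initial-segment} to $h^{-1},g^{-1}$ (its proof is valid for arbitrary automorphisms, and N2$'$ supplies its hypothesis) gives $\half(h^{-1})=\half((gh)^{-1})$, so the two sets share an element and $h\prec gh$ forces $\half(h)\ltp\half(gh)$. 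As $\half(h)$ and $\half(gh)$ are the length-$\floor{\abs{h}/2}$ prefixes of $[\vo,h\vo]$ and $[\vo,gh\vo]$, and $<_{\abs{h}}$ refines comparison by shortest differing prefix, I get $[\vo,h\vo]\ltp[\vo,gh\vo]$, and Lemma \ref{lem:reduced-path} finishes.

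For $(\star)\Rightarrow$ N4 (which yields ``fundamental system $\Rightarrow$ strongly \reduced''): given $g,h\in X^\pm$ with $g\neq h^{-1}$, I must derive $h\ltg gh$. The case $\abs{h}<\abs{gh}$ is immediate, and if $\abs{h}=\abs{gh}$ then $(\star)$ plus the prefix-refinement property of $<_{\abs{h}}$ give $\half(h)\leqp\half(gh)$, and Lemma \ref{lem:same-initial-segment} gives $\half(h^{-1})=\half((gh)^{-1})$; so it remains only to rule out $\half(h)=\half(gh)$, whereupon $\half(h)\ltp\half(gh)$ and $h\prec gh$. This is the core step. From $\abs{gh}=\abs{h}=\abs{g}+\abs{h}-2\delta(g^{-1},h)$ I get $\delta(g^{-1},h)=\abs{g}/2$, which is the length of a path in $T$ (Proposition \ref{prop:delta-is-intersection}) hence an integer, so $\abs{g}$ is even; with $a=\proj_{\Ax(g)}(\vo)$, the bridge $[\vo,g\vo]\cap[\vo,g^{-1}\vo]=[\vo,a]$ has length $\tfrac{1}{2}(\abs{g}-l(g))$ (Proposition \ref{prop:translation-length-formula}), and using $\abs{g}\equiv l(g)\pmod 2$, $l(g)\geq1$, and $\abs{g}\leq\abs{h}$, a short estimate gives $d(\vo,a)<\floor{\abs{h}/2}$. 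Now $[\vo,gh\vo]$ begins with the first $\abs{g}/2$ edges of $[\vo,g\vo]$ (Proposition \ref{prop:delta-is-intersect-2}) while $[\vo,h\vo]$ begins with the first $\delta(g^{-1},h)=\abs{g}/2$ edges of $[\vo,g^{-1}\vo]$ (Proposition \ref{prop:delta-is-intersection}); since these two paths leave $a$ along opposite edges of $\Ax(g)$, the paths $[\vo,h\vo]$ and $[\vo,gh\vo]$ agree exactly on $[\vo,a]$ and diverge at an edge strictly inside both initial halves. Hence $\half(h)\neq\half(gh)$, the comparison of $\half(h),\half(gh)$ is decided at the same edge as that of $[\vo,h\vo],[\vo,gh\vo]$, so $\half(h)\ltp\half(gh)$ by $(\star)$, and $h\prec gh$, giving N4.

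The step I expect to be the main obstacle is precisely this exclusion of $\half(h)=\half(gh)$ in the tie case: condition $(\star)$ by itself only yields $\half(h)\leqp\half(gh)$, so one genuinely needs the structure of $\Ax(g)$ — that $[\vo,g\vo]$ and $[\vo,g^{-1}\vo]$ split apart already at $\proj_{\Ax(g)}(\vo)$ — together with the parity relation $\abs{g}\equiv l(g)\pmod 2$, to locate the first edge at which $[\vo,h\vo]$ and $[\vo,gh\vo]$ differ and check that it lies within the first halves. The rest is routine unwinding of the definitions of $\ltg$, $\ltp$, $\prec$ and the fundamental system, plus Lemmas \ref{lem:reduced-path} and \ref{lem:same-initial-segment}.
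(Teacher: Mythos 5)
Your proof is correct, and its skeleton is the paper's: both implications are routed through Lemma \ref{lem:reduced-path}, and your forward direction (N4 plus Lemma \ref{lem:same-initial-segment} applied to $h^{-1},g^{-1}$ to get $\half(h^{-1})=\half((gh)^{-1})$, hence $\half(h)\ltp\half(gh)$ and $[\vo,h\vo]\ltp[\vo,gh\vo]$) is essentially identical to the paper's, as is your preliminary clearing of N1, absence of elliptics, freeness via Ping-Pong, and N2. Where you genuinely differ is the converse: the paper argues contrapositively --- if $X$ is not strongly \reduced then either $\abs{gh}<\abs{h}$ for some pair, or $X$ satisfies N2, $\abs{gh}=\abs{h}$ and $gh\prec h$, so in either case $[\vo,gh\vo]\ltp[\vo,h\vo]$ and Lemma \ref{lem:reduced-path} rules out a fundamental system --- whereas you derive N4 directly from the path condition. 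The cost of the direct route is exactly the step you single out: the path condition only yields $\half(h)\leqp\half(gh)$, and you must exclude the tie $\half(h)=\half(gh)$. Your exclusion argument is sound: $\abs{gh}=\abs{h}$ forces $\delta(g^{-1},h)=\abs{g}/2$, so $\abs{g}$ is even and (by the parity $\abs{g}\equiv l(g)\pmod 2$) $l(g)\geq 2$; the paths $[\vo,gh\vo]$ and $[\vo,h\vo]$ start with the length-$\abs{g}/2$ prefixes of $[\vo,g\vo]$ and $[\vo,g^{-1}\vo]$, which split at $\proj_{\Ax(g)}(\vo)$ at distance $(\abs{g}-l(g))/2\leq\abs{g}/2-1\leq\floor{\abs{h}/2}-1$ from $\vo$ (using $\abs{g}\leq\abs{h}$, which indeed follows from the path condition applied to $(h^{-1},g^{-1})$); hence the two paths diverge inside their initial halves and the lexicographic comparison transfers from the full paths to the halves. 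The paper's contrapositive sidesteps this by asserting outright that a failure of N4 under N2 gives $gh\prec h$, i.e.\ it implicitly excludes the same tie; your divergence-at-the-axis argument is precisely the justification of that implicit step, so your version is slightly longer but makes explicit a detail the paper leaves to the reader, at no loss of correctness.
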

\begin{proof}
  We may assume $X$ contains no elliptic elements and satisfies N1.
  Suppose $X$ is strongly \reduced. Let $g, h \in X^\pm$ such that $g \neq h^{-1}$. By Lemma \ref{lem:reduced-path}, it suffices to show that $[\vo, h\vo] < [\vo, gh\vo]$. This holds by definition if $\abs{h} < \abs{gh}$, so suppose otherwise. By N4, $\abs{h} = \abs{gh}$. We apply Lemma \ref{lem:same-initial-segment} to $h^{-1}$ and $g^{-1}$. From N2$'$ and this application we conclude that $\half(h^{-1}) = \half(h^{-1}g^{-1})$. By N4, $\half(h) < \half(gh)$. Therefore $[\vo, h\vo] < [\vo, gh\vo]$.
  
  Conversely, suppose $X$ is not strongly \reduced. If there exists $g, h \in X^{\pm}$ such that $g \neq h^{-1}$ and $\abs{gh} < \abs{h}$, then $[\vo, gh\vo] < [\vo, h\vo]$. Otherwise, suppose $X$ satisfies N2 and violates N4. Then there exists $g, h \in X^\pm$ such that $\abs{gh} = \abs{h}$ and $gh \prec h$. By a similar argument to the forward direction, $\half(gh) < \half(h)$, so $[\vo, gh\vo] < [\vo, h\vo]$. By Lemma \ref{lem:reduced-path}, $X$ does not admit a fundamental system.
\end{proof}
\begin{corollary}
  Every finitely generated purely hyperbolic subgroup of $\Aut(T)$ admits a unique fundamental system.
\end{corollary}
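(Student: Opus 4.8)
The plan is to assemble this corollary from three ingredients established above: the existence of a strongly \reduced basis, the equivalence (Theorem~\ref{thm:reduced-equivalent}) between being strongly \reduced and admitting a fundamental system, and the uniqueness (Theorem~\ref{thm:reduced-is-unique}) of strongly \reduced bases. Throughout, fix a finite generating set $X_0$ of a finitely generated purely hyperbolic $G \leq \Aut(T)$.

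For existence I would run Algorithm~\ref{alg:reduce} on $X_0$. Every element the algorithm ever stores in $X'$ lies in $\gen{X_0} = G$, and $G$ has no non-trivial elliptic element, so the $(\texttt{False}, X')$ branch is never taken; the algorithm therefore terminates and returns a strongly \reduced free basis $X$ of $G$ (using the remark after Definition~\ref{def:strongly-reduced} that bases output by Algorithm~\ref{alg:reduce} are strongly \reduced). By Theorem~\ref{thm:reduced-equivalent}, $X$ admits a fundamental system $U$, which by Definition~\ref{def:fundamental-system} is a fundamental system for $G$.

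For uniqueness, suppose $U$ and $U'$ are fundamental systems for $G$, coming respectively from generating sets $X$ and $Y$ of $G$. Since each admits a fundamental system, Theorem~\ref{thm:reduced-equivalent} shows $X$ and $Y$ are strongly \reduced; both are finite generating sets of the purely hyperbolic group $G$, so Theorem~\ref{thm:reduced-is-unique} gives $X^{\pm} = Y^{\pm}$. It then remains to observe that the fundamental system depends only on the symmetrised set: because $U^0_{g^{-1}} = U^0_g$ and $u^{\pm}_{g^{-1}} = u^{\pm}_g$ by construction, the triple $\{U^-_g, U^0_g, U^+_g\}$ equals $\{U^-_{g^{-1}}, U^0_{g^{-1}}, U^+_{g^{-1}}\}$, so
\[
  U = \bigcup_{g \in X} \{U^-_g, U^0_g, U^+_g\} = \bigcup_{g \in X^{\pm}} \{U^-_g, U^0_g, U^+_g\} = \bigcup_{g \in Y^{\pm}} \{U^-_g, U^0_g, U^+_g\} = U'.
\]

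There is essentially no serious obstacle here: all the real content sits in Theorems~\ref{thm:reduced-is-unique} and~\ref{thm:reduced-equivalent} and in Algorithm~\ref{alg:reduce}. The only point worth stating explicitly is that pure hyperbolicity forbids non-trivial elliptic elements in \emph{any} subset of $G$, which is exactly what guarantees the algorithm terminates on the successful branch and hence that a strongly \reduced basis — equivalently, a fundamental system — actually exists.
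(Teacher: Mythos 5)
Your proposal is correct and follows the paper's own argument exactly: existence by running Algorithm~\ref{alg:reduce} (which cannot hit the elliptic branch since $G$ is purely hyperbolic) and invoking Theorem~\ref{thm:reduced-equivalent}, and uniqueness from Theorem~\ref{thm:reduced-is-unique}. Your extra observation that the fundamental system depends only on $X^{\pm}$, because $U^{0}_{g^{-1}}=U^{0}_{g}$, $U^{\pm}_{g^{-1}}=U^{\pm}_{g}$, is a correct spelling-out of a step the paper leaves implicit.
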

\begin{proof}
  Existence follows from Algorithm \ref{alg:reduce} and Theorem \ref{thm:reduced-equivalent}; uniqueness follows from Theorem \ref{thm:reduced-is-unique}.
\end{proof}
A \emph{fundamental domain} of $G$ is a subtree of $T$ containing exactly one vertex from every orbit of $G$ \cite[Chapter I.4]{serre-trees}.
\begin{definition}
  Suppose $G$ admits a fundamental system $U$. Define $\Gamma(G)$ to be the subtree of $T$ with vertex set
  \[
    \Set{w \in V(T) \given \proj_{\Ax(g)}(w) \in U^0_g \text{ for all } g \in X}.
  \]
\end{definition}
We show that $\Gamma(G)$ is a fundamental domain of $G$; see Corollary \ref{cor:fundamental-domain}.
\begin{lemma}\label{lem:outside-unique-axis}
  Suppose $X$ is strongly \reduced. For each vertex $w$ of $T$ there is at most one $g \in X$ such that $\proj_{\Ax(g)}(w) \notin U^0_g$.
\end{lemma}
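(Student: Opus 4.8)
The plan is to deduce this directly from the fundamental‑system structure made available by Theorem \ref{thm:reduced-equivalent}, together with the geodesic criterion of Lemma \ref{lem:proj-is-intersect}. Since $X$ is strongly \reduced, it admits a fundamental system $U$, and in particular $\proj_{\Ax(g)}(\Ax(h)) \subseteq U_g^0$ for every pair of distinct $g, h \in X$. I would argue by contradiction: suppose there are distinct $g, h \in X$ with $\proj_{\Ax(g)}(w) \notin U_g^0$ and $\proj_{\Ax(h)}(w) \notin U_h^0$. Since $\Ax(g)$ is partitioned as $U_g^- \cup U_g^0 \cup U_g^+$, the first condition says precisely $\proj_{\Ax(g)}(w) \in U_g^\pm$, and likewise for $h$.

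Next I would translate both conditions into the geometry of the single geodesic $[\vo, w]$. By Lemma \ref{lem:proj-is-intersect} (the equivalence of its first two clauses), $[\vo, w] \cap U_g^\pm \neq \varnothing$ and $[\vo, w] \cap U_h^\pm \neq \varnothing$; pick $a$ in the former and $b$ in the latter. The crucial point is that $a$ and $b$ lie on the same arc $[\vo, w]$ and are therefore comparable along it, so after possibly interchanging the roles of $g$ and $h$ we may assume $a \in [\vo, b]$. Then $[\vo, b]$ is an initial subpath of $[\vo, w]$ that still meets $U_g^\pm$, since it contains $a$.

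Applying Lemma \ref{lem:proj-is-intersect} once more, now with $b$ playing the role of $w$, the nonempty intersection $[\vo, b] \cap U_g^\pm$ forces $\proj_{\Ax(g)}(b) \in U_g^\pm$. On the other hand $b \in U_h^\pm \subseteq \Ax(h)$, so $\proj_{\Ax(g)}(b) \in \proj_{\Ax(g)}(\Ax(h)) \subseteq U_g^0$; since $U_g^0$ and $U_g^\pm$ are disjoint, this is the desired contradiction. I do not anticipate a genuine obstacle: the argument is short once one notices that the two witnessing points on the common geodesic $[\vo, w]$ can be collapsed onto a single sub‑geodesic, at which point the fundamental‑system bound $\proj_{\Ax(g)}(\Ax(h)) \subseteq U_g^0$ applies. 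The only thing needing care is the bookkeeping around the half‑open conventions defining $U_g^0$ and $U_g^\pm$ — in particular that $\proj_{\Ax(g)}(w) \notin U_g^0$ is equivalent to $\proj_{\Ax(g)}(w) \in U_g^\pm$ — which is immediate from the definitions.
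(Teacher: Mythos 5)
Your proof is correct and follows essentially the same route as the paper's: the paper likewise argues that if $\proj_{\Ax(g)}(w)$ and $\proj_{\Ax(h)}(w)$ both lay outside $U^0$, then by Lemma \ref{lem:proj-is-intersect} both witnessing points sit on $[\vo,w]$, one precedes the other, and Lemma \ref{lem:proj-is-intersect} together with Theorem \ref{thm:reduced-equivalent} (the fundamental-system condition $\proj_{\Ax(g)}(\Ax(h)) \subseteq U^0_g$) yields the contradiction. Your version merely spells out the final step more explicitly than the paper's terse proof, with the inessential difference that you take arbitrary points of $[\vo,w]\cap U_g^\pm$ and $[\vo,w]\cap U_h^\pm$ rather than the projections themselves.
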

\begin{proof}
  Let $g, h \in X$ be distinct. Let $a = \proj_{\Ax(g)}(w)$ and $b = \proj_{\Ax(h)}(w)$. Suppose $a \in U_g^\pm$ and $b \in U_h^\pm$. By Lemma \ref{lem:proj-is-intersect}, $a$ and $b$ are both in $[\vo, w]$, so either $a \in [\vo, b]$ or $b \in [\vo, a]$. By Lemma \ref{lem:proj-is-intersect} and Theorem \ref{thm:reduced-equivalent}, $X$ is not strongly \reduced.
\end{proof}
\begin{proposition}\label{prop:fund-domain-1}
  Suppose $X$ is strongly \reduced. Let $w \in V(T)$. The following are equivalent:
  \begin{enumerate}
    \item $w \in \Gamma(G)$. 
    \item $[\vo, w] \ltp [\vo, xw]$ for all $x \in X^\pm$.
    \item $[\vo, w] \ltp [\vo, gw]$ for all $g \in G \setminus \{1\}$.
  \end{enumerate}
\end{proposition}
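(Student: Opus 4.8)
The plan is to prove $(1)\Leftrightarrow(2)$ directly from Lemma \ref{lem:proj-is-intersect}, to note that $(3)\Rightarrow(2)$ is immediate (since $X^{\pm}\subseteq G\setminus\{1\}$ because a strongly \reduced set has no elliptic elements and satisfies N1), and then to obtain the substantive implication $(2)\Rightarrow(3)$ by a ping-pong induction on word length.

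\emph{For $(1)\Leftrightarrow(2)$.} By definition $w\in\Gamma(G)$ means $\proj_{\Ax(g)}(w)\in U^0_g$ for every $g\in X$, equivalently for every $g\in X^{\pm}$. Since a strongly \reduced set has no elliptic elements, each $g\in X^{\pm}$ is hyperbolic and fixes no vertex, so $[\vo,g^{\e}w]\neq[\vo,w]$ for $\e\in\{1,-1\}$. As $\Ax(g)=U^0_g\cup U^{\pm}_g$ disjointly, Lemma \ref{lem:proj-is-intersect} says $\proj_{\Ax(g)}(w)\in U^0_g$ is equivalent to ``$[\vo,g^{\e}w]\not\ltp[\vo,w]$ for both $\e$'', which by the previous remark and the totality of $\ltp$ is the same as $[\vo,w]\ltp[\vo,g^{\e}w]$ for both $\e$. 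Letting $g$ range over $X$ and $\e$ over $\{1,-1\}$, i.e.\ letting $x=g^{\e}$ range over $X^{\pm}$, gives exactly $(2)$.

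\emph{For $(2)\Rightarrow(3)$.} I would prove, by induction on $n$, the following strengthening: if $w$ satisfies $(2)$ and $g=a_1\cdots a_n$ is a reduced word in $X$ with $n\geq1$, then $[\vo,w]\ltp[\vo,gw]$ \emph{and} $\proj_{\Ax(a_1)}(gw)$ lies in the ``forward'' component of $\Ax(a_1)\setminus U^0_{a_1}$, meaning the one towards which $a_1$ translates (this is contained in $U^{\pm}_{a_1}$). The case $n=1$ is $(2)$ together with $\proj_{\Ax(a_1)}(a_1 w)=a_1\proj_{\Ax(a_1)}(w)\in a_1 U^0_{a_1}$, which lies in the forward component because $w\in\Gamma(G)$ (via $(1)\Leftrightarrow(2)$) gives $\proj_{\Ax(a_1)}(w)\in U^0_{a_1}$. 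For $n\geq2$ write $g=a_1h$ with $h=a_2\cdots a_n$. The key auxiliary fact is a \emph{separation statement}: if $\proj_{\Ax(a)}(v)\in U^{\pm}_a$ for some $a\in X^{\pm}$ and some vertex $v$, then $\proj_{\Ax(b)}(v)\in U^0_b$ for every $b\in X^{\pm}$ with $b\notin\{a,a^{-1}\}$. Indeed, $X$ admits a fundamental system by Theorem \ref{thm:reduced-equivalent}, so $\proj_{\Ax(b)}(\Ax(a))\subseteq U^0_b$ and $\proj_{\Ax(a)}(\Ax(b))\subseteq U^0_a$; then $\proj_{\Ax(a)}(\proj_{\Ax(b)}(v))\in U^0_a$ and $\proj_{\Ax(a)}(v)\in U^{\pm}_a$ are distinct points of the convex set $\Ax(a)$, so the geodesic $[v,\proj_{\Ax(b)}(v)]$ passes through $\proj_{\Ax(a)}(v)$, whence $\proj_{\Ax(b)}(v)=\proj_{\Ax(b)}(\proj_{\Ax(a)}(v))\in\proj_{\Ax(b)}(\Ax(a))\subseteq U^0_b$. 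Now the inductive hypothesis applied to $h$ gives $\proj_{\Ax(a_2)}(hw)\in U^{\pm}_{a_2}$; if $a_1\neq a_2$ (hence $a_1\notin\{a_2,a_2^{-1}\}$, by reducedness), the separation statement yields $\proj_{\Ax(a_1)}(hw)\in U^0_{a_1}$, while if $a_1=a_2$ the inductive hypothesis directly puts $\proj_{\Ax(a_1)}(hw)$ in the forward component of $\Ax(a_1)\setminus U^0_{a_1}$. In either case $\proj_{\Ax(a_1)}(hw)$ does not lie in the backward component, so $\proj_{\Ax(a_1)}(gw)=a_1\proj_{\Ax(a_1)}(hw)$ lies in the forward component, proving the second clause; and then $a_1^{-1}$ translates $\proj_{\Ax(a_1)}(gw)$ towards $\proj_{\Ax(a_1)}(\vo)$, so Lemma \ref{lem:proj-is-intersect} gives $[\vo,a_1^{-1}(gw)]\ltp[\vo,gw]$ (it is the sign $-1$ that works, as the proof of that lemma shows, via Lemma \ref{lem:path-overlap-comparison}); that is, $[\vo,hw]\ltp[\vo,gw]$, and combining with the inductive $[\vo,w]\ltp[\vo,hw]$ completes the induction. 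Since each $g\in G\setminus\{1\}$ has a unique reduced $X$-word of length $\geq1$, this proves $(3)$.

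\emph{Main obstacle.} I expect the delicate point to be the inductive step of $(2)\Rightarrow(3)$: formulating and proving the separation statement (where the fundamental-system hypothesis and the convex-projection geometry of trees enter), and tracking the ``forward component'' precisely enough that the applicable sign in Lemma \ref{lem:proj-is-intersect} is $\e=-1$. The borderline situation, where $\abs{hw}=\abs{gw}$, is exactly the one that the tie-breaking ingredient $[\vo,u^+_{a_1}]\ltp[\vo,u^-_{a_1}]$ built into the definition of $U^0_{a_1}$ was designed for, and it is already subsumed in Lemma \ref{lem:proj-is-intersect}, so invoking that lemma (rather than Lemma \ref{lem:translate-overlap} directly) avoids a separate case analysis.
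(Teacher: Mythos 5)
Your proposal is correct, and its skeleton matches the paper's: $(1)\Leftrightarrow(2)$ straight from Lemma \ref{lem:proj-is-intersect}, $(3)\Rightarrow(2)$ trivial, and $(2)\Rightarrow(3)$ by induction along the word, using the fact that a vertex can project outside $U^0$ for at most one generator's axis. The differences are in the bookkeeping of that induction. The paper writes $g=x_n^{\e_n}\cdots x_1^{\e_1}$ in syllables with $x_i\neq x_{i+1}$ and carries the invariant that the shorter word's image projects into $U^0_{x_{\text{next}}}$; it then only ever needs the \emph{negative} direction of Lemma \ref{lem:proj-is-intersect} (projection in $U^0$ means no $\e$ shortens), so no choice of sign arises, and the syllable grouping (together with $U^0_{x_i}\subseteq U^0_{x_i^{\e_i}}$, $U^\pm_{x_i^{\e_i}}\subseteq U^\pm_{x_i}$) absorbs repeated letters. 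You instead peel single letters and carry the stronger invariant that $\proj_{\Ax(a_1)}(gw)$ lies in the forward half-axis, which handles the repeated-letter case directly but forces you to use the \emph{positive} direction of Lemma \ref{lem:proj-is-intersect} with a specific sign; as you note, the statement of that lemma only gives ``some $\e$'', and you need the refinement that the working sign is the one translating $\proj_{\Ax(a_1)}(gw)$ towards $\proj_{\Ax(a_1)}(\vo)$. That refinement is genuinely needed and is not in the lemma's statement, but you correctly extract it from its proof (Lemma \ref{lem:path-overlap-comparison} plus the tie-break $[\vo,u^+_g]\ltp[\vo,u^-_g]$ and the half-open convention for $U^0_g$), so this is a legitimate, if slightly less self-contained, step. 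Finally, your ``separation statement'' is exactly Lemma \ref{lem:outside-unique-axis} (in contrapositive form), which you could simply have cited rather than re-derived from Theorem \ref{thm:reduced-equivalent}; your re-derivation via projections of axes is correct and is essentially how the paper proves that lemma anyway. Net effect: the paper's route is a bit cleaner in that it quotes only stated results, while yours makes the directional geometry (and the role of the half-open interval $U^0_g$) more explicit; both are valid.
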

\begin{proof}
  The equivalence of $(1)$ and $(2)$ is given directly by Lemma \ref{lem:proj-is-intersect}, and (3) $\Rightarrow$ (2) is trivial. We now prove $(2) \Rightarrow (3)$. Suppose $[\vo, w] \ltp [\vo, xw]$ for all $x \in X^\pm$, or equivalently $w \in \Gamma(G)$. Let $g = x_n^{\e_n} \dots x_1^{\e_1}$, where each $x_i \in X$, $\e_i \in \Z \setminus \{0\}$, and $x_i \neq x_{i+1}$. Let $h = x_{n-1}^{\e_{n-1}} \dots x_{1}^{\e_1}$ and if $n \geq 2$, then let $h' = x_{n-2}^{\e_{n-2}} \dots x_{1}^{\e_1}$. By transitivity, it suffices to show that $[\vo, hw] < [\vo, gw]$. We proceed by induction on $n$.
  
  First we show that $\proj_{\Ax(x_n)}(hw) \in U_{x_n}^0$.
  If $n=1$, then $h$ is trivial, so the statement holds by Lemma \ref{lem:proj-is-intersect}. Suppose $n > 1$.
  Note that $U_{x_i}^0 \subseteq U_{x_i^{\e_i}}^0$ and $U_{x_i^{\e_i}}^\pm \subseteq U_{x_i}^\pm$ for all $i$, since $\Ax(x_i) = \Ax(x_i^{\e_i})$ and $U_{x_i^{\e_i}}^0$ has length $l(x_i^{\e_i}) \geq l(x_i)$.
  Additionally, by the induction hypothesis $[\vo, h'w] < [\vo, hw]$. By Lemma \ref{lem:proj-is-intersect}, 
  \[
    \proj_{\Ax(x_{n-1})}(hw) \in U_{x_{n-1}^{\e_{n-1}}}^\pm \subset U_{x_{n-1}}^\pm.
  \]
  By Lemma \ref{lem:outside-unique-axis}, $\proj_{\Ax(x_n)}(hw) \in U_{x_n}^0$, as desired.
  
  It follows that $\proj_{\Ax(x_n)}(hw) \in U_{x_n^{\e_n}}^0$. By Lemma \ref{lem:proj-is-intersect}, $[\vo, hw] < [\vo, gw]$, completing the proof.
\end{proof}
\begin{corollary}\label{cor:fundamental-domain}
  $\Gamma(G)$ has vertex set $\Set{w \in V(T) \given [\vo, w] \ltp [\vo, gw] \text{ for all } g \in G \setminus \{1\}}$. In particular, $\Gamma(G)$ is a fundamental domain of $G$. 
\end{corollary}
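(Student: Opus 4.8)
The plan is to deduce everything from Proposition~\ref{prop:fund-domain-1}, using only that $\ltp$ is a well-ordering and that $G$ is purely hyperbolic; throughout we assume $X$ is strongly \reduced, as otherwise $\Gamma(G)$ is not defined. The stated description of the vertex set of $\Gamma(G)$ is exactly the equivalence $(1) \Leftrightarrow (3)$ of Proposition~\ref{prop:fund-domain-1}, so only the claim that $\Gamma(G)$ is a fundamental domain requires argument.

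First I would show that $\Gamma(G)$ is a subtree. Since $[\vo,\vo]$ is the $\ltp$-minimal path and $g\vo \neq \vo$ for every $g \neq 1$, condition $(3)$ gives $\vo \in \Gamma(G)$, so $\Gamma(G)$ is nonempty. It then suffices to show that whenever $w \in \Gamma(G)$, every vertex on $[\vo, w]$ also lies in $\Gamma(G)$; this exhibits the vertex set of $\Gamma(G)$ as $\bigcup_{w \in \Gamma(G)}[\vo, w]$, a union of paths based at $\vo$ and hence connected, so it spans a subtree of $T$. To prove the claim, suppose $w' \in [\vo, w]$ but $w' \notin \Gamma(G)$. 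Then $\proj_{\Ax(h)}(w') \in U_h^\pm$ for some $h \in X$, so by Lemma~\ref{lem:proj-is-intersect} the path $[\vo, w']$ meets $U_h^\pm$. As $[\vo, w'] \subseteq [\vo, w]$, the path $[\vo, w]$ meets $U_h^\pm$ as well, and Lemma~\ref{lem:proj-is-intersect} then forces $\proj_{\Ax(h)}(w) \in U_h^\pm$, contradicting $w \in \Gamma(G)$.

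Next I would check that $\Gamma(G)$ meets each $G$-orbit in exactly one vertex. For uniqueness: if $w$ and $gw$ both lie in $\Gamma(G)$ and $g \neq 1$, then condition $(3)$ applied to $g$ at $w$ gives $[\vo, w] \ltp [\vo, gw]$, while applied to $g^{-1}$ at $gw$ it gives $[\vo, gw] \ltp [\vo, w]$, a contradiction; hence $g = 1$. For existence: given $v \in V(T)$, the set $\Set{[\vo, gv] \given g \in G}$ is a nonempty subset of $P$, so it has a $\ltp$-least element, attained by some $g_0 \in G$. For every $h \in G \setminus \{1\}$, minimality gives $[\vo, g_0 v] \leqp [\vo, h g_0 v]$; if equality held then $h g_0 v = g_0 v$, so $g_0^{-1} h g_0$ fixes the vertex $v$, and pure hyperbolicity of $G$ forces $h = 1$, a contradiction. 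Thus $[\vo, g_0 v] \ltp [\vo, h g_0 v]$ for all $h \neq 1$, so $g_0 v \in \Gamma(G)$ by Proposition~\ref{prop:fund-domain-1}.

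The step I expect to be the main obstacle is the existence part: it relies on $\Set{[\vo, gv] \given g \in G}$ attaining a minimum, which is precisely where the well-ordering property of $\ltp$ enters, and on pure hyperbolicity to upgrade the weak inequality ``$\leqp$'' supplied by minimality to the strict ``$\ltp$'' demanded by condition $(3)$. The subtree and uniqueness parts are short consequences of Lemma~\ref{lem:proj-is-intersect} and Proposition~\ref{prop:fund-domain-1} respectively.
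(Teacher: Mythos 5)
Your proof is correct and follows essentially the same route as the paper: the vertex-set description is the equivalence $(1)\Leftrightarrow(3)$ of Proposition~\ref{prop:fund-domain-1}, and the fundamental-domain property comes from taking the $\ltp$-minimal path $[\vo, gv]$ over the orbit (well-ordering) together with the strict inequality in condition $(3)$. You additionally spell out details the paper leaves implicit --- that $\Gamma(G)$ is indeed a subtree, and that pure hyperbolicity upgrades $\leqp$ to $\ltp$ at the minimiser --- which is a welcome, not divergent, elaboration.
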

\begin{proof}
  Let $u \in V(T)$. There must be some unique $g \in G$ minimising $[\vo, gu]$, since $\ltp$ is a well-ordering. By Proposition \ref{prop:fund-domain-1}, $gu$ is the unique element of $Gu$ in $\Gamma(G)$. 
\end{proof}
\begin{remark}
  $\Gamma(G)$ is closely related to the \emph{Dirichlet fundamental domain} \cite[Definition 1.8]{lubotzky}. However, a Dirichlet fundamental domain is not necessarily a fundamental domain, since it also contains each vertex $w$ of $T$ such that $\proj_{\Ax(g)}(w) = u_g^+$ for some $g \in X$.
\end{remark}
Given $w \in V(T)$, Algorithm \ref{alg:fundamental} finds the unique $g \in G$ such that $gw \in \Gamma(G)$.
\begin{algorithm}[ht]
  \DontPrintSemicolon
  \KwData{Finite strongly \reduced subset $X$ of $\Aut(T)$, $w \in V(T)$}
  \KwOut{The unique $g \in G=\gen{X}$ such that $gw \in \Gamma(G)$}
  $g \gets 1$\;
  \While{there exists $x \in X^{\pm}$ such that $[\vo, xgw] \ltp [\vo, gw]$}{
    $g \gets xg$\;
  }
  \Return{$g$}\;
  \caption{Find $g \in G$ mapping vertex to fundamental domain}
  \label{alg:fundamental}
\end{algorithm}
\begin{proof}[Proof of correctness of Algorithm \emph{\ref{alg:fundamental}}]
  Let $a = \proj_{\Ax(g)}(w)$.
  Suppose $w \notin \Gamma(G)$. By Proposition \ref{prop:fund-domain-1}, there is some $x \in X^\pm$ such that $[\vo, xw] < [\vo, w]$. We thus obtain a sequence $w = w_0, w_1 \dots$, where $w_i = x_i w_{i-1}$ for some $x_i \in X^\pm$ and $[\vo, w_i] < [\vo, w_{i-1}]$ for each $i$. Eventually this sequence must terminate, so we find some $w_n = x_n \dots x_1 w$ such that $[\vo, w_n] < [\vo, xw_n]$ for all $x \in X^\pm$. By Proposition \ref{prop:fund-domain-1}, $w_n \in \Gamma(G)$.
\end{proof}
As a corollary, we obtain the following:
\conMem*
\begin{proof}
  Let $g \in \Aut(T)$. We wish to decide whether $g \in G$, and if so write $g$ as a word in $X$.
  We may assume $X$ is strongly \reduced, since by keeping track of the transformations involved in Algorithm \ref{alg:reduce} we may write each word in a strongly \reduced basis of $G$ as a word in $X$.
  Using Algorithm \ref{alg:fundamental}, we find the unique $h \in G$, and a reduced word of $h$ in $X$, such that $hg\vo \in \Gamma(G)$. It follows that $g \in G$ if and only if $g = h^{-1}$.
\end{proof}
\begin{remark}
  The proof of Theorem \ref{thm:constructive-membership} provides an alternative algorithm for Corollary \ref{cor:equal-subgroup}: If $X$ and $Y$ are strongly \reduced, then $\gen{X} = \gen{Y}$ if and only if $x \in \gen{Y}$ and $y \in \gen{X}$ for all $x \in X$ and $y \in Y$. However, we included the first algorithm to show that deciding equality of these groups is easier than Theorem \ref{thm:constructive-membership} would imply.
\end{remark}

\section{Implementation and performance}\label{sec:implementation}

We implemented in \textsc{Magma} \cite{magma} our algorithms for $\PGL_2(K)$, where $K$ is a $p$-adic field, acting on the Bruhat-Tits tree \cite{magmacode}. This package implements Algorithms \ref{alg:reduce}, \ref{alg:fundamental}, and the algorithms given in the proofs of Corollary \ref{cor:equal-subgroup} and Theorem \ref{thm:constructive-membership}. In our implementation, $X$ is input as a sequence rather than a set, since this simplifies the code.

Table \ref{table:runtime-generators} shows the runtime of Algorithm \ref{alg:reduce}, and the average time per iteration of the loop, for different values of $\abs{X}$. The trials are run with randomly chosen elements of $\SL_2(\Q_5)$ with 1000 digits of precision, where each entry has valuation at most 10. Changing the choice of prime $p$ and the precision did not seem to significantly affect the runtime. The times shown are averaged over 1000 trials.

Increasing the maximum valuation of entries of the generators rapidly leads to the algorithm losing precision, so we do not run trials over a range of maximum valuations.

Tables \ref{table:runtime-fundamental-gens} and \ref{table:runtime-fundamental-dist} show runtimes of Algorithm \ref{alg:fundamental}. Recall that Algorithm \ref{alg:fundamental} takes as input a strongly \reduced subset of $\Aut(T)$ and a vertex of $T$. We record the average runtime over all pairings of 100 strongly \reduced generating sets and 100 vertices.
For Table \ref{table:runtime-fundamental-gens} we vary $\abs{X}$ and choose $w$ such that $d(w, \vo) = 100$, and for
Table \ref{table:runtime-fundamental-dist} we vary $d(w, \vo)$ and set $\abs{X} = 5$. The code used for these tests is provided in \cite{magmacode}. The trials were run using \textsc{Magma} V2.28-2 on a \qty{2.6}{GHz} machine.
\vfill
\begin{table}[ht]
  \centering
  \begin{tabular}{c|c|c}
    \hline
    $\abs{X}$ & Average time (seconds) & Average time / iteration (seconds) \\
    \hline
    2 & 0.03 & 0.01 \\
    5 & 0.80 & 0.07 \\
    10 & 3.06 & 0.18 \\
    100 & 10.87 & 0.97 \\
    \hline
  \end{tabular}
  \caption{Runtimes of Algorithm \ref{alg:reduce} for different numbers of generators}
  \label{table:runtime-generators}
\end{table}
\vfill
\begin{table}[ht]
  \centering
  \begin{tabular}{c|c}
    \hline
    $\abs{X}$ & Average time (seconds) \\
    \hline
    2 & 0.04 \\
    3 & 0.06 \\
    5 & 0.10 \\
    10 & 0.21 \\
    \hline
  \end{tabular}
  \caption{Runtimes of Algorithm \ref{alg:fundamental} for different numbers of generators}
  \label{table:runtime-fundamental-gens}
\end{table}
\vfill
\begin{table}[ht]
  \centering
  \begin{tabular}{c|c}
    \hline
    $d(w, \vo)$ & Average time (seconds) \\
    \hline
    5 & 0.01 \\
    10 & 0.02 \\
    20 & 0.03 \\
    50 & 0.05 \\
    100 & 0.10 \\
    \hline
  \end{tabular}
  \caption{Runtimes of Algorithm \ref{alg:fundamental} for various distances of $w$ from $\vo$}
  \label{table:runtime-fundamental-dist}
\end{table}
\vfill
\pagebreak




\bibliography{references}
\bibliographystyle{abbrv}

\end{document}